\newtheorem{theorem}{Theorem}[section]
\newtheorem{lemma}[theorem]{Lemma}
\newtheorem{proposition}[theorem]{Proposition}
\theoremstyle{remark}
\newtheorem{remark}[theorem]{Remark}
\begin{document}

\title[Harmonic Bergman Spaces on the Real Hyperbolic Ball]
{Harmonic Bergman Spaces on the Real Hyperbolic Ball:
Atomic Decomposition, Interpolation and Inclusion Relations}

\thanks{This research is supported by Eski\c{s}ehir Technical University
Research Fund under grant 22ADP349.}

\author{A. Ers$\dot{\hbox{\i}}$n \"Ureyen}
\address{Department of Mathematics, Faculty of Science,
Eki\c{s}ehir Technical University, 26470, Eski\c{s}ehir,
Turkey}
\email{aeureyen@eskisehir.edu.tr}

\date{\today}

\subjclass[2010]{Primary 31C05; Secondary 46E22}

\keywords{real hyperbolic ball, hyperbolic harmonic function,
Bergman space, atomic decomposition, interpolation,
inclusion relations}

\begin{abstract}
For $\alpha>-1$ and $0<p<\infty$, we study weighted Bergman
spaces $\mathcal B^p_\alpha$ of harmonic functions on the real
hyperbolic ball and obtain an atomic decomposition of these
spaces in terms of reproducing kernels.
We show that an $r$-separated sequence $\{a_m\}$ with
sufficiently large $r$ is an interpolating sequence for
$\mathcal B^p_\alpha$.
Using these we determine precisely when a Bergman space
$\mathcal B^p_\alpha$ is included in another Bergman
space $\mathcal B^q_\beta$.
\end{abstract}

\maketitle

\section{Introduction}\label{SIntro}

For $x,y\in\mathbb R^n$, let $\langle x,y\rangle=x_1y_1+\dots+x_ny_n$ be
the Euclidean inner product and $\lvert x\rvert=\sqrt{\langle x,x\rangle}$
be the corresponding norm.
Let $\mathbb B=\{x\in\mathbb R^n:\lvert x\rvert<1\}$ be the
unit ball and $\mathbb S=\partial\mathbb B$ be the unit sphere.
The hyperbolic ball is $\mathbb B$ endowed with the hyperbolic metric
\begin{equation*}
ds^2=\frac{4}{(1-\lvert x\rvert^2)^2}\sum_{i=1}^n dx_i^2.
\end{equation*}
The Laplacian $\Delta_h$ and the gradient $\nabla^h$ with respect
to the hyperbolic metric are given by (we refer the reader to
\cite[Chapter 3]{St1} for more details about $\Delta_h$ and $\nabla^h$)
\begin{equation*}
(\Delta_h f)(a)=\Delta(f\circ\varphi_a)(0)
\qquad (f\in C^2(\mathbb B)),
\end{equation*}
and
\begin{equation*}
(\nabla^h f)(a)=-\nabla(f\circ\varphi_a)(0)
\qquad (f\in C^1(\mathbb B)),
\end{equation*}
where $\Delta=\partial^2/\partial x_1^2+\dots+\partial^2/\partial x_n^2$
and $\nabla=\big(\partial/\partial x_1,\dots,\partial/\partial x_n\big)$
are the usual Euclidean Laplacian and gradient.
Here $\varphi_a$ is the canonical M\"obius transformation mapping
$\mathbb B$ to $\mathbb B$ and exchanging $a$ and $0$ given
in \eqref{definevarphia}.
It is easy to show that
\begin{equation*}
\Delta_hf(a)=(1-\lvert a\rvert^2)^2\Delta f(a)
+2(n-2)(1-\lvert a\rvert^2)\langle a,\nabla f(a)\rangle,
\end{equation*}
and
\begin{equation}\label{nablahf}
\nabla^hf(a)=(1-\lvert a\rvert^2)\nabla f(a).
\end{equation}
A twice continuously differentiable function
$f\colon\mathbb B\to\mathbb C$ is called hyperbolic harmonic or
$\mathcal H$-harmonic on $\mathbb B$ if $\Delta_hf\equiv 0$.
We denote the space of all $\mathcal H$-harmonic functions
by $\mathcal H(\mathbb B)$.

Let $\nu$ be the Lebesgue measure on $\mathbb R^n$ normalized so
that $\nu(\mathbb B)=1$.
For $\alpha>-1$, define the weighted measure $d\nu_\alpha(x)$ by
\begin{equation*}
d\nu_\alpha(x)=(1-\lvert x\rvert^2)^\alpha\,d\nu(x).
\end{equation*}
For $0<p<\infty$ and $\alpha>-1$, we denote the Lebesgue space with
respect to $d\nu_\alpha$ by $L^p_\alpha=L^p(d\nu_\alpha)$.
The subspace $\mathcal B^p_\alpha$ consisting of $\mathcal H$-harmonic
functions is called the weighted $\mathcal H$-harmonic Bergman space,
\begin{equation*}
\mathcal B^p_\alpha=\Bigl\{f\in\mathcal H(\mathbb B):\|f\|_{L^p_\alpha}^p
=\int_{\mathbb B}\lvert f(x)\rvert^p\,d\nu_\alpha(x)<\infty\Bigr\}.
\end{equation*}
These are Banach spaces when $1\leq p<\infty$, and complete metric spaces
with respect to the metric $d(f,g)=\|f-g\|^p_{L^p_\alpha}$ when $0<p<1$.

Point evaluation functionals are bounded on all $\mathcal B^p_\alpha$
and  in particular $\mathcal B^2_\alpha$ is a reproducing kernel Hilbert space.
Therefore for every $x\in\mathbb B$, there exists
$\mathcal R_\alpha(x,\cdot)\in\mathcal B^2_\alpha$ such that
\begin{equation}\label{reproduce}
f(x)=\int_{\mathbb B} f(y)\overline{\mathcal R_\alpha(x,y)}\,d\nu_\alpha(y)
\qquad (f\in\mathcal B^2_\alpha).
\end{equation}
The reproducing kernel $\mathcal R_\alpha(\cdot,\cdot)$ is symmetric
in its variables, is real valued (so conjugation in \eqref{reproduce}
can be deleted) and is $\mathcal H$-harmonic with respect to each variable.

For $a,b\in\mathbb B$, let $\rho(a,b)=\lvert\varphi_a(b)\rvert$ be the
pseudo-hyperbolic metric, and for $0<r<1$, let
$E_r(a)=\{x\in\mathbb B:\rho(x,a)<r\}$ be the pseudo-hyperbolic ball.
For $0<r<1$, a sequence $\{a_m\}$ of points of $\mathbb B$ is called
$r$-separated if $\rho(a_k,a_m)\geq r$ when $k\ne m$.
An $r$-separated sequence $\{a_m\}$ is called an $r$-lattice if
$\bigcup_{m=1}^\infty E_r(a_m)=\mathbb B$, that is, if $\{a_m\}$ is maximal.

In \cite[Theorem 2]{CR}, it is shown by Coifman and Rochberg that
if $\{a_m\}$ is an $r$-lattice with $r$ sufficiently small, then every
\textit{holomorphic} Bergman function $f\in A^p$ on the unit ball of
$\mathbb C^n$ (more generally on a symmetric Siegel domain of type two) can be
represented in the form $f(z)=\sum_{m=1}^\infty\lambda_m\tilde{B}(z,a_m)$, where
$\{\lambda_m\}\in\ell^p$ and $\tilde{B}(z,a_m)$ is determined by $B(\cdot,a_m)$,
the reproducing kernel at the point $a_m$.
This representation is called atomic decomposition, $B(\cdot,a_m)$ being
the atoms.
They further showed that a similar decomposition holds also for
(Euclidean) \textit{harmonic} functions on the unit ball of $\mathbb R^n$.
This last result is extended in \cite{T1} and \cite{T2} to
harmonic Bergman spaces on bounded symmetric domains of $\mathbb R^n$.

Our first aim in this work is to show that an if $\{a_m\}$ is an
$r$-lattice with small enough $r$, then an analogous series representation
in terms of the reproducing kernels holds also for $\mathcal H$-harmonic
Bergman spaces $\mathcal B^p_\alpha$.
For atomic decomposition of $\mathcal H$-harmonic \textit{Hardy} spaces
on the real hyperbolic ball, see \cite{J}.

\begin{theorem}\label{Tatomic}
Let $\alpha>-1$ and $0<p<\infty$.
Pick $s$ large enough to satisfy
\begin{equation}\label{scond}
\begin{split}
\alpha+1&<p(s+1),\quad\text{if $p\geq 1$}\\
\alpha+n&<p(s+n),\quad\text{if $0<p<1$}.
\end{split}
\end{equation}
There is an $r_0<1/8$ depending only on $n,\alpha,p,s$
such that if $\{a_m\}$ is an $r$-lattice with
$r<r_0$, then for every $f\in\mathcal B^p_\alpha$, there exists
$\{\lambda_m\}\in\ell^p$ such that
%with $\|\{\lambda_m\}\|_{\ell^p}\sim\|f\|_{\mathcal B^p_\alpha}$ such that
\begin{equation}\label{atomFrm}
f(x)=\sum_{m=1}^\infty \lambda_m(1-\lvert a_m\rvert^2)^{s+n-(\alpha+n)/p}\,
\mathcal R_s(x,a_m)\qquad (x\in\mathbb B),
\end{equation}
where the series converges absolutely and uniformly on compact subsets of $\mathbb B$
and in $\|\cdot\|_{\mathcal B^p_\alpha}$, and the norm $\|\{\lambda_m\}\|_{\ell^p}$
is equivalent to the norm $\|f\|_{\mathcal B^p_\alpha}$.
\end{theorem}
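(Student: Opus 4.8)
The plan is to realize $f$ as a Riemann-type discretization of a reproducing integral and to correct the discretization error by a Neumann series. First I would record the pointwise estimates on which everything rests: the size bound $|\mathcal R_s(x,y)|\lesssim [x,y]^{-(n+s)}$, where $[x,y]^2=1-2\langle x,y\rangle+|x|^2|y|^2$, together with the companion oscillation bound $|\nabla^h_y\mathcal R_s(x,y)|\lesssim [x,y]^{-(n+s)}$ for the hyperbolic gradient in the second variable; the Forelli--Rudin integral estimate $\int_{\mathbb B}(1-|y|^2)^c[x,y]^{-(n+c+t)}\,d\nu(y)\asymp(1-|x|^2)^{-t}$ for $t>0$; the volume comparison $\nu_s(E_r(a))\asymp(1-|a|^2)^{n+s}$; the bounded-overlap and comparability properties of $E_r(a)$ under the pseudo-hyperbolic metric; and a sub-mean-value inequality $|f(a)|^p\lesssim\nu_\alpha(E_r(a))^{-1}\int_{E_r(a)}|f|^p\,d\nu_\alpha$ valid for $\mathcal H$-harmonic $f$. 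I would also use the reproducing formula $f(x)=\int_{\mathbb B}f(y)\mathcal R_s(x,y)\,d\nu_s(y)$, which I expect to be available for $s$ large (the continuous analogue of \eqref{atomFrm}).

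With these in hand I would first prove the easy, synthesis direction: the operator $S\{\lambda_m\}=\sum_m\lambda_m(1-|a_m|^2)^{s+n-(\alpha+n)/p}\mathcal R_s(\cdot,a_m)$ maps $\ell^p$ boundedly into $\mathcal B^p_\alpha$. This both shows that the series in \eqref{atomFrm} converges in norm and uniformly on compacta, and gives the inequality $\|f\|_{\mathcal B^p_\alpha}\lesssim\|\{\lambda_m\}\|_{\ell^p}$. Here the two cases of \eqref{scond} enter. For $0<p<1$ I would use $p$-subadditivity of $\|\cdot\|_{L^p_\alpha}^p$, so that $\|S\lambda\|^p\le\sum_m|\lambda_m|^p\,(1-|a_m|^2)^{p(s+n)-(\alpha+n)}\|\mathcal R_s(\cdot,a_m)\|_{L^p_\alpha}^p$, and the Forelli--Rudin estimate with $t=(n+s)p-(n+\alpha)>0$ --- exactly the condition $\alpha+n<p(s+n)$ --- makes each summand uniformly bounded. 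For $p\ge1$ the weaker requirement $\alpha+1<p(s+1)$ suffices: I would estimate $S$ by a Schur test with a test function of the form $(1-|x|^2)^{-\sigma}$ for a suitable $\sigma>0$, the Forelli--Rudin integrals arising in the Schur conditions converging precisely under that inequality.

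For the analysis direction I would partition $\mathbb B$ into disjoint Borel sets $D_m$ with $E_{r/2}(a_m)\subset D_m\subset E_r(a_m)$ and $\bigcup_m D_m=\mathbb B$, and define the discretizing operator $Tf(x)=\sum_m\big(\int_{D_m}f\,d\nu_s\big)\mathcal R_s(x,a_m)$. Replacing the kernel in the reproducing formula by its value at $a_m$ on each $D_m$, the error becomes $f-Tf=\int_{\mathbb B}f(y)\sum_m\mathbf 1_{D_m}(y)\big(\mathcal R_s(x,y)-\mathcal R_s(x,a_m)\big)\,d\nu_s(y)$, and the oscillation bound together with $\rho(y,a_m)<r$ on $D_m$ endows the integral kernel of $f-Tf$ with a bound of size $r$ times the kernel controlling the reproducing operator. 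A Schur test then yields $\|I-T\|_{\mathcal B^p_\alpha\to\mathcal B^p_\alpha}\le Cr$ with $C$ independent of $r$, so for $r<r_0$ small enough $T$ is invertible on $\mathcal B^p_\alpha$ by a Neumann series. Writing $g=T^{-1}f$ and setting $\lambda_m=(1-|a_m|^2)^{-(s+n-(\alpha+n)/p)}\int_{D_m}g\,d\nu_s$ produces the representation \eqref{atomFrm}; the coefficient estimate $\|\{\lambda_m\}\|_{\ell^p}\lesssim\|g\|_{\mathcal B^p_\alpha}\asymp\|f\|_{\mathcal B^p_\alpha}$ follows from the sub-mean-value inequality, H\"older (for $p\ge1$) or $p$-subadditivity (for $p<1$), and the volume comparison, the weight exponents cancelling exactly.

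The main obstacle is the error estimate $\|I-T\|\le Cr$. It requires the quantitative oscillation bound on $\mathcal R_s$ --- not merely its size --- which is where the hyperbolic gradient identity \eqref{nablahf} and the regularity of the kernel do the real work, and it requires that the resulting Schur test be uniform in the lattice so that $C$ genuinely does not depend on $r$ (only $r_0$ does). Verifying that the single $s$-range \eqref{scond} makes every Forelli--Rudin integral in both the synthesis bound and the Schur test simultaneously finite is the second delicate bookkeeping point; everything else is the standard lattice machinery.
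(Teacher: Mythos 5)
Your overall architecture (synthesis operator bounded on $\ell^p$, discretization-error operator with norm $O(r)$, Neumann series inversion, coefficient estimate via sub-mean value) is exactly the paper's skeleton, and your choice of discretizer $Tf(x)=\sum_m\bigl(\int_{D_m}f\,d\nu_s\bigr)\mathcal R_s(x,a_m)$ is in fact a small improvement over the paper's $U\hat Tf(x)=\sum_m f(a_m)\nu_s(E_m)\mathcal R_s(x,a_m)$: your error consists of the kernel-oscillation term alone, whereas the paper must additionally control a term involving $f(y)-f(a_m)$. For $p\geq 1$ your plan goes through: the oscillation bound gives $\lvert f(x)-Tf(x)\rvert\lesssim r\int_{\mathbb B}\lvert f(y)\rvert[x,y]^{-(s+n)}\,d\nu_s(y)$, and boundedness of this positive integral operator on $L^p_\alpha$ under $\alpha+1<p(s+1)$ is exactly the Schur-test statement the paper invokes through its projection lemma.

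The genuine gap is the case $0<p<1$, where your single sentence ``a Schur test then yields $\|I-T\|\le Cr$'' cannot be repaired as stated. Schur's test (and any duality-based bound) is a statement about $L^p$ for $1\le p<\infty$; for $p<1$ the positive integral operator $f\mapsto\int_{\mathbb B}\lvert f(y)\rvert\,[x,y]^{-(s+n)}\,d\nu_s(y)$ is simply not bounded on $L^p_\alpha$ (test it on normalized characteristic functions of small pseudo-hyperbolic balls shrinking to a boundary point: the output decays too slowly to lie in $L^p_\alpha$ uniformly). The error operator is bounded only on the \emph{harmonic subspace}, and proving this must use harmonicity of $f$ quantitatively, not just the kernel bound. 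The paper does this by fixing a coarse $1/2$-lattice $\{b_m\}$, regrouping the $E_{mk}$ inside the coarse cells, replacing $\lvert f(y)\rvert$ by its sub-mean value bound $(1-\lvert b_m\rvert^2)^{-(\alpha+n)}\int_{E_{3/4}(b_m)}\lvert f\rvert^p\,d\nu_\alpha$ (Lemma \ref{Lsubmean}, and Lemma \ref{Lgradf} for the gradient term), then summing the resulting discrete series with Lemma \ref{LIntonSandB}, $p$-subadditivity, and the bounded-overlap Lemma \ref{LNballs}. Your proposal contains none of this replacement mechanism. A second, related omission for $p<1$: you ``expect'' the reproducing formula $P_sf=f$ to hold, but for $f\in\mathcal B^p_\alpha$ with $p<1$ this is not automatic (the defining integral need not converge a priori); the paper justifies it by first proving the continuous inclusion $\mathcal B^p_\alpha\subset\mathcal B^1_{(\alpha+n)/p-n}$ (Lemma \ref{Linclude}) and then applying the $L^1$ projection theorem, which is also where the stronger hypothesis $\alpha+n<p(s+n)$ in \eqref{scond} is consumed. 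With these two repairs your argument would match the paper; without them the case $0<p<1$ is unproved.
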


\begin{remark}
The decomposition above can be written in other forms.
Theorem \ref{Tatomic} remains true if \eqref{atomFrm} is replaced with
\begin{equation}\label{newad}
f(x)=\sum_{m=1}^\infty \lambda_m
\frac{\mathcal R_s(x,a_m)}{\|\mathcal R_s(\cdot,a_m)\|_{\mathcal B^p_\alpha}}
\qquad (x\in\mathbb B).
\end{equation}
This follows from the fact that
$\|\mathcal R_s(\cdot,a_m)\|_{\mathcal B^p_\alpha}
\sim(1-\lvert a_m\rvert^2)^{(\alpha+n)/p-(s+n)}$
by \eqref{IntRpower} below, and can be verified with only minor changes
in the proof given in Section \ref{SA}.
Also, $\mathcal R_s(a_m,a_m)\sim(1-\lvert a_m\rvert^2)^{-(s+n)}$
by \eqref{EstRtwosided}, and \eqref{atomFrm} can be replaced with
\begin{equation*}
f(x)=\sum_{m=1}^\infty \lambda_m (1-\lvert a_m\rvert^2)^{-(\alpha+n)/p}\,
\frac{\mathcal R_s(x,a_m)}{\mathcal R_s(a_m,a_m)}\qquad (x\in\mathbb B).
\end{equation*}
\end{remark}

We next consider the interpolation problem.
If $\{a_m\}$ is $r$-separated and $f\in\mathcal B^p_\alpha$, then
the sequence (see Proposition \ref{Pfaminlp} below)
\begin{equation*}
\big\{f(a_m)(1-\lvert a_m\rvert^2)^{(\alpha+n)/p}\big\}
\end{equation*}
is in $\ell^p$.
If the converse holds, that is, if for every $\{\lambda_m\}\in\ell^p$,
one can find an $f\in\mathcal B^p_\alpha$ such that
$f(a_m)(1-\lvert a_m\rvert^2)^{(\alpha+n)/p}=\lambda_m$,
then $\{a_m\}$ is called an interpolating sequence for
$\mathcal B^p_\alpha$.
We show that if the separation constant $r$ is large enough,
then $\{a_m\}$ is an interpolating sequence.

\begin{theorem}\label{TInterp}
Let $\alpha>-1$ and $0<p<\infty$.
There is an $r_0$ with $1/2<r_0<1$ depending only on $n,\alpha,p$
such that if $\{a_m\}$ is an $r$-separated
sequence with $r>r_0$, then for every $\{\lambda_m\}\in\ell^p$, there
exists $f\in\mathcal B^p_\alpha$ such that
%with $\|f\|_{\mathcal B^p_\alpha}\sim\|\{\lambda_m\}\|_{\ell^p}$ such that
\begin{equation*}
  f(a_m)(1-\lvert a_m\rvert^2)^{(\alpha+n)/p}=\lambda_m,
\end{equation*}
and the norm $\|f\|_{\mathcal B^p_\alpha}$
is equivalent to the norm $\|\{\lambda_m\}\|_{\ell^p}$.
\end{theorem}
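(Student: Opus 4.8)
The plan is to realize the interpolation map as an invertible operator on $\ell^p$ by comparing it with its diagonal. Fix $s$ satisfying \eqref{scond} and, in addition, large enough to make the off-diagonal sums below converge; this only strengthens the lower bound on $s$ and keeps $r_0$ dependent on $n,\alpha,p$ alone, since $s=s(n,\alpha,p)$. Introduce the synthesis map
\[
S\colon \ell^p\to\mathcal B^p_\alpha,\qquad
S(\{\mu_m\})(x)=\sum_{m=1}^\infty\mu_m(1-\lvert a_m\rvert^2)^{s+n-(\alpha+n)/p}\,\mathcal R_s(x,a_m),
\]
which is bounded (this is the synthesis estimate proved on the way to Theorem \ref{Tatomic}, guaranteeing convergence of the series in \eqref{atomFrm}), and the restriction map $\mathcal R\colon\mathcal B^p_\alpha\to\ell^p$, $\mathcal Rf=\{f(a_m)(1-\lvert a_m\rvert^2)^{(\alpha+n)/p}\}$, which is bounded by Proposition \ref{Pfaminlp} (only $r$-separation is needed here). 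Setting $T=\mathcal R\circ S$, it suffices to show that $T$ is invertible on $\ell^p$ once $r$ is close enough to $1$: given $\{\lambda_m\}\in\ell^p$ one then takes $f=S(T^{-1}\{\lambda_m\})$, so that $\mathcal Rf=\{\lambda_m\}$, and the two-sided norm bound follows from $\|f\|_{\mathcal B^p_\alpha}\le\|S\|\,\|T^{-1}\|\,\|\{\lambda_m\}\|_{\ell^p}$ together with $\|\{\lambda_m\}\|_{\ell^p}=\|\mathcal Rf\|_{\ell^p}\le\|\mathcal R\|\,\|f\|_{\mathcal B^p_\alpha}$.

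The operator $T$ is given by the matrix $(c_{km})$ with
\[
c_{km}=(1-\lvert a_k\rvert^2)^{(\alpha+n)/p}(1-\lvert a_m\rvert^2)^{s+n-(\alpha+n)/p}\,\mathcal R_s(a_k,a_m).
\]
On the diagonal, $c_{kk}=(1-\lvert a_k\rvert^2)^{s+n}\mathcal R_s(a_k,a_k)$, which by \eqref{EstRtwosided} is bounded above and below by positive constants uniformly in $k$; hence the diagonal part $D=\mathrm{diag}(c_{kk})$ is invertible on $\ell^p$ with $\|D^{-1}\|$ controlled independently of the sequence. Writing $T=D+E$ with $E$ the off-diagonal part, we have $T=D(I+D^{-1}E)$, and invertibility will follow from a Neumann series once the (quasi-)norm of $E$ on $\ell^p$ is shown to be arbitrarily small for $r$ large.

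The remaining and main step is the estimate of $\|E\|$. Here I would use the off-diagonal decay of the kernel: combining the pointwise size estimate for $\mathcal R_s(a_k,a_m)$ (the upper bound behind \eqref{EstRtwosided}) with the standard relation expressing $\rho(a_k,a_m)$ through the factors $1-\lvert a_k\rvert^2$ and $1-\lvert a_m\rvert^2$ gives a bound of the shape $\lvert c_{km}\rvert\lesssim\big((1-\lvert a_k\rvert^2)/(1-\lvert a_m\rvert^2)\big)^{\gamma}(1-\rho(a_k,a_m)^2)^{\kappa}$ for a suitable real $\gamma$ and $\kappa=(s+n)/2>0$. For $p\ge1$ a weighted Schur test with weight $(1-\lvert a_m\rvert^2)^{\gamma}$ cancels the ratio and reduces both the row and column sums to $\sum_{m\ne k}(1-\rho(a_k,a_m)^2)^{\kappa}$; for $0<p<1$ the analogous sums $\sum_{m}\lvert c_{km}\rvert^p$ are handled directly in the quasinorm after controlling the weight ratio by further powers of $1-\rho(a_k,a_m)^2$, giving $\sum_{m\ne k}(1-\rho(a_k,a_m)^2)^{\kappa'}$ with $\kappa'>0$. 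Passing to the hyperbolic distance, where $r$-separation forces every off-diagonal pair to have hyperbolic distance at least $d_r\sim\log\frac1{1-r}$, and using the packing bound that an $r$-separated sequence has at most $\sim e^{(n-1)R}$ points in a hyperbolic ball of radius $R$, each such sum is dominated by $\sum_{j\ge0}e^{((n-1)-\kappa')(d_r+j)}$; having taken $s$ (hence $\kappa'$) large enough that $\kappa'>n-1$, the geometric series converges and the whole sum is $O\big(e^{((n-1)-\kappa')d_r}\big)\to0$ as $r\to1$.

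Thus there is $r_0\in(1/2,1)$, depending only on $n,\alpha,p$ through the fixed $s$, so that $\|D^{-1}E\|<1$ for $r>r_0$; the Neumann series converges, $T$ is invertible, and the theorem follows as described in the first paragraph. The main obstacle is precisely this off-diagonal estimate: one must show that the gain from the kernel's decay in $\rho$ strictly beats the exponential growth of the number of lattice points with the hyperbolic radius, uniformly in the base index $k$ and with the asymmetric weight factors correctly balanced. Once the exponent $\kappa'$ is pinned down this is a convergent geometric-series argument, but extracting genuine decay in $r$ (rather than mere finiteness of the sum) is what forces the separation constant to be large.
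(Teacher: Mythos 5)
Your overall architecture is essentially the paper's: a bounded synthesis operator, the bounded restriction operator of Proposition \ref{Pfaminlp}, and invertibility of the composition on $\ell^p$ via a Neumann series once the off-diagonal part is small. (The paper normalizes the atoms by $\mathcal R_s(a_m,a_m)$ so that the diagonal of the composition is exactly $I$, whereas you split off a diagonal $D$ comparable to $I$ by \eqref{EstRtwosided}; that difference is harmless.) The genuine gap is in your key off-diagonal estimate when $1<p<\infty$. By \eqref{MobiusIdnt} one has $[a_k,a_m]^{-(s+n)}=\bigl((1-\rho(a_k,a_m)^2)/((1-\lvert a_k\rvert^2)(1-\lvert a_m\rvert^2))\bigr)^{(s+n)/2}$, so your matrix bound carries ratio exponent $\gamma=(\alpha+n)/p-(s+n)/2$ and decay exponent $\kappa=(s+n)/2$. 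Two problems arise. First, the algebra: with weight $w_m^{p'}=(1-\lvert a_m\rvert^2)^{\gamma}$ the row sums cancel, but the column sums do not --- they retain a residual ratio exponent $\gamma p$ --- so it is not true that both Schur sums reduce to $\sum_{m\ne k}(1-\rho(a_k,a_m)^2)^{\kappa}$. Second, and decisively: any use of the annulus/packing bound forces you first to absorb the ratio via Lemma \ref{Lratiosquare}, $\bigl(\tfrac{1-\lvert a_k\rvert^2}{1-\lvert a_m\rvert^2}\bigr)^{e}\lesssim(1-\rho(a_k,a_m)^2)^{-\lvert e\rvert}$, leaving effective exponent $\kappa-\lvert e\rvert$; and your claim that this can be pushed above $n-1$ by enlarging $s$ is false, because $\lvert\gamma\rvert$ grows with $s$ at exactly the same rate as $\kappa$. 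Indeed $\kappa-\lvert\gamma\rvert\leq(\alpha+n)/p$ for every $s$, and running the Schur test with an arbitrary weight $(1-\lvert a_m\rvert^2)^{t}$ one checks that both effective exponents (row and column) can exceed $n-1$ only if $(\alpha+n)/p>n-1$. Hence for $1<p<\infty$ with $\alpha+n\leq p(n-1)$ (for instance $n=2$, $\alpha=0$, $p=3$) your argument cannot be closed for any admissible choice of $s$ and of the weight.

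The reason the counting bound is too lossy is that $\#\{m:\ d(a_k,a_m)\approx R\}\lesssim e^{(n-1)R}$ ignores where in the annulus the points sit, while the sums you must control are weighted by powers of $1-\lvert a_m\rvert^2$, and most points of an annulus lie near the boundary where that weight is tiny. The paper's Lemma \ref{LsmallSeries} retains this information: using disjointness of the balls $E_{1/4}(a_k)$ and the strong triangle inequality (Lemma \ref{LstrongTri}), it dominates the sum by the integral $\int_{\mathbb B\backslash E_R(a_m)}(1-\lvert y\rvert^2)^{b-n}[a_m,y]^{-(b+c)}\,d\nu(y)$, and Lemma \ref{LsmallInt} shows this is small as $R\to1^-$ under only $b>n-1$ and $c>0$ --- exactly what \eqref{scond} supplies for both Schur sums with the paper's weight $\gamma_m=(1-\lvert a_m\rvert^2)^{(n-1)/pp'}$ (row sums have $c=(\alpha+1)/p$, column sums $c=s+1-(\alpha+1)/p$). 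Note that your case $0<p\leq1$ does survive, since there the effective exponent saturates at $\alpha+n>n-1$; but to repair $1<p<\infty$ you must replace the annulus count by an integral comparison of this type (or otherwise prove a smallness lemma that needs only $c>0$ rather than $c>n-1$).
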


The \textit{holomorphic} analogue of the above theorem is proved in
\cite{Amr} for the unit ball and polydisc, and in \cite{Ro} for
more general domains of $\mathbb C^n$.
For \textit{harmonic} Bergman spaces on the upper half-space
of $\mathbb R^n$, an analogous result is proved in \cite{CY}.

\begin{remark}
Let $b^p_\alpha$ be the weighted (Euclidean) harmonic Bergman spaces
on $\mathbb B$.
The proof of the interpolation theorem we give in Section \ref{SInterp}
below works also for the harmonic case and Theorem \ref{TInterp} is true
when $\mathcal B^p_\alpha$ replaced with $b^p_\alpha$.
\end{remark}

Finally, we determine precisely when a Bergman space $\mathcal B^p_\alpha$
is contained in an another Bergman space $\mathcal B^q_\beta$.

\begin{theorem}\label{Tinclusion}
Let $\alpha,\beta>-1$ and $0<p,q<\infty$.
\begin{enumerate}
\item[(a)] If $q\geq p$, then
\begin{equation*}
  \mathcal B^p_\alpha\subset\mathcal B^q_\beta\quad
  \text{if and only if}\quad
  \frac{\alpha+n}{p}\leq\frac{\beta+n}{q}
\end{equation*}
\item[(b)] If $q<p$, then
\begin{equation*}
  \mathcal B^p_\alpha\subset\mathcal B^q_\beta\quad
  \text{if and only if}\quad
  \frac{\alpha+1}{p}<\frac{\beta+1}{q}
\end{equation*}
\end{enumerate}
In both cases the inclusion
$i\colon\mathcal B^p_\alpha\to\mathcal B^q_\beta$ is continuous.
\end{theorem}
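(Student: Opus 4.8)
The plan is to establish the quantitative bound $\|f\|_{\mathcal B^q_\beta}\lesssim\|f\|_{\mathcal B^p_\alpha}$ under each stated hypothesis (this yields both the inclusion and its continuity at once), and to rule out the inclusion when the hypothesis fails by feeding the interpolation and sampling machinery already developed into a sharp weighted sequence estimate. For the sufficiency I would split into the two regimes. When $q\ge p$, I would start from the pointwise growth bound $|f(x)|\lesssim\|f\|_{\mathcal B^p_\alpha}(1-\lvert x\rvert^2)^{-(\alpha+n)/p}$ (the sub-mean-value estimate over the balls $E_r(x)$ underlying Proposition \ref{Pfaminlp}) and insert it into the factor $|f|^{q-p}$:
\[
\|f\|_{\mathcal B^q_\beta}^q=\int_{\mathbb B}|f|^{q-p}|f|^p(1-\lvert x\rvert^2)^\beta\,d\nu\lesssim\|f\|_{\mathcal B^p_\alpha}^{q-p}\int_{\mathbb B}|f|^p(1-\lvert x\rvert^2)^{\beta-(q-p)(\alpha+n)/p}\,d\nu.
\]
A one-line computation shows $\beta-(q-p)(\alpha+n)/p\ge\alpha$ is equivalent to $\frac{\alpha+n}{p}\le\frac{\beta+n}{q}$, so under (a) the last integral is dominated by $\|f\|_{\mathcal B^p_\alpha}^p$ (using $1-\lvert x\rvert^2\le1$). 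When $q<p$ I would instead apply Hölder's inequality with exponents $p/q$ and $p/(p-q)$ to peel off the weight, reducing matters to the finiteness of $\int_{\mathbb B}(1-\lvert x\rvert^2)^{(p\beta-q\alpha)/(p-q)}\,d\nu$; this converges exactly when $(p\beta-q\alpha)/(p-q)>-1$, i.e. when $\frac{\alpha+1}{p}<\frac{\beta+1}{q}$, giving (b).

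For the necessity I would argue by contradiction, using that any set inclusion $\mathcal B^p_\alpha\subset\mathcal B^q_\beta$ is automatically bounded: both spaces are complete and point evaluations are continuous, so the inclusion has closed graph. Fix an $r$-lattice $\{a_m\}$ with $r$ exceeding the threshold $r_0$ of Theorem \ref{TInterp}; being maximal $r$-separated it is both $r$-separated and covers $\mathbb B$. Given $\{\lambda_m\}\in\ell^p$, Theorem \ref{TInterp} produces $f\in\mathcal B^p_\alpha$ with $f(a_m)=\lambda_m(1-\lvert a_m\rvert^2)^{-(\alpha+n)/p}$ and $\|f\|_{\mathcal B^p_\alpha}\sim\|\{\lambda_m\}\|_{\ell^p}$. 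Assuming the inclusion holds, $f\in\mathcal B^q_\beta$ with $\|f\|_{\mathcal B^q_\beta}\lesssim\|\{\lambda_m\}\|_{\ell^p}$; inserting this into the sampling inequality of Proposition \ref{Pfaminlp} for $\mathcal B^q_\beta$ gives, for every $\{\lambda_m\}\in\ell^p$,
\[
\sum_m|\lambda_m|^q\,w_m\lesssim\Bigl(\sum_m|\lambda_m|^p\Bigr)^{q/p},\qquad w_m=(1-\lvert a_m\rvert^2)^{\beta+n-q(\alpha+n)/p}.
\]
The task then reduces to reading off the exact condition for this weighted $\ell^p\to\ell^q$ estimate to hold for all $\{\lambda_m\}\in\ell^p$.

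When $q\ge p$, testing a single nonzero coordinate forces $\sup_m w_m<\infty$; since the lattice has points approaching $\mathbb S$ this means $\beta+n-q(\alpha+n)/p\ge0$, i.e. $\frac{\alpha+n}{p}\le\frac{\beta+n}{q}$ (conversely $w\in\ell^\infty$ bounds the left side by $\|\{\lambda_m\}\|_{\ell^q}^q\le\|\{\lambda_m\}\|_{\ell^p}^q$). When $q<p$, duality for the exponent $p/(p-q)$ — choosing $|\lambda_m|^p$ proportional to $w_m^{p/(p-q)}$ on finite sets — shows the estimate is equivalent to $\{w_m\}\in\ell^{p/(p-q)}$, that is to the convergence of $\sum_m(1-\lvert a_m\rvert^2)^E$ with $E=\bigl(p(\beta+n)-q(\alpha+n)\bigr)/(p-q)$. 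Because $\{a_m\}$ is a lattice one has $\sum_m(1-\lvert a_m\rvert^2)^E\sim\int_{\mathbb B}(1-\lvert x\rvert^2)^{E-n}\,d\nu$, which is finite iff $E>n-1$, and a direct simplification turns $E>n-1$ into $\frac{\alpha+1}{p}<\frac{\beta+1}{q}$. Taking contrapositives in each regime finishes the necessity.

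I expect the necessity half of part (b) to be the main obstacle. The naive test with normalized kernels $\mathcal R_s(\cdot,a)/\|\mathcal R_s(\cdot,a)\|_{\mathcal B^p_\alpha}$ only sees the ``$+n$'' scaling and so detects only the condition of part (a); it is blind to the borderline ``$+1$'' condition governing $q<p$, and in particular cannot distinguish the equality case $\frac{\alpha+1}{p}=\frac{\beta+1}{q}$. The resolution above is to probe the sampling inequality with the \emph{entire} family of $\ell^p$ sequences transported by interpolation, which converts the inclusion into the sharp summability threshold $\{w_m\}\in\ell^{p/(p-q)}$ for a weighted $\ell^p\to\ell^q$ embedding. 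The delicate point is reconciling two competing demands on the spacing: interpolation requires $r$ large, while the divergence of $\sum_m(1-\lvert a_m\rvert^2)^E$ requires sufficiently many lattice points; both are satisfied simultaneously by a maximal $r$-separated set with $r>r_0$.
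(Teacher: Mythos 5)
Your proposal is correct, and its overall architecture matches the paper's, but part (a) is handled by genuinely different means, so a comparison is worthwhile. For sufficiency in (a) the paper feeds the atomic decomposition into the inclusion: $f$ is expanded via Theorem \ref{Tatomic}, the coefficients are re-weighted by $(1-\lvert a_m\rvert^2)^{(\beta+n)/q-(\alpha+n)/p}$, and Proposition \ref{Patom1} together with $\ell^p\subset\ell^q$ gives $f\in\mathcal B^q_\beta$ with norm control; you instead insert the pointwise growth bound \eqref{pointeva} into the factor $\lvert f\rvert^{q-p}$ --- the same device the paper uses only for the special case Lemma \ref{Linclude} --- and your exponent check $\beta-(q-p)(\alpha+n)/p\ge\alpha\iff(\alpha+n)/p\le(\beta+n)/q$ is right, so you obtain a more elementary proof that bypasses atomic decomposition altogether. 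For necessity in (a) the paper tests the (closed-graph-bounded) inclusion on the kernels $\mathcal R_s(a,\cdot)$ and reads off the exponent from the two-sided norm estimate \eqref{IntRpower}; you instead rerun the interpolation-plus-sampling machine and test single coordinates of the resulting weighted $\ell^p\to\ell^q$ inequality. That costs more (Theorem \ref{TInterp} is invoked where a one-line kernel computation would do) but buys uniformity: both halves of your necessity flow from one and the same sequence inequality. Part (b) of your argument is essentially the paper's own: H\"older with exponents $p/q$ and $p/(p-q)$ for sufficiency; and for necessity, interpolation along an $r$-lattice with $r>r_0$, sampling in $\mathcal B^q_\beta$ via Proposition \ref{Pfaminlp}, then the lattice summability threshold of Lemma \ref{LSumam} (b), with the same arithmetic turning $E>n-1$ into $(\alpha+1)/p<(\beta+1)/q$. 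Your only departure there is to replace the abstract duality of Lemma \ref{Lseq} (ii) by the explicit finite-section test $\lvert\lambda_m\rvert^p\propto w_m^{p/(p-q)}$, which is valid because the bounded inclusion supplies the uniform constant your truncation argument needs. Finally, your closing observation --- that kernel testing sees only the ``$+n$'' scaling and is blind to the sharp ``$+1$'' threshold when $q<p$ --- is accurate, and it is precisely why the paper, like you, routes the necessity of (b) through interpolation rather than through kernels.
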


For holomorphic Bergman spaces on the unit ball of $\mathbb C^n$,
a similar theorem has been proved in \cite[Lemma 2.1]{JMT}.
However this source uses gap series formed by using the so-called
Ryll-Wojtaszczyk polynomials (see \cite{RW}).
We do not know whether such type of $\mathcal H$-harmonic functions
exist on the real hyperbolic ball.
Our proof is based on the above atomic decomposition and interpolation
theorems.

\section{Preliminaries}\label{SPre}

In this section we collect some facts about M\"obius
transformations and $\mathcal H$-harmonic Bergman spaces
that will be used in the sequel.
These are all known and we try to provide a suitable
reference.
If the proof is short we include it for the convenience
of the reader.

\subsection{Notation}

We denote positive constants whose exact values are inessential
with $C$.
The value of $C$ may differ from one occurrence to another.
We write $X\lesssim Y$ if $X\leq CY$, and $X\sim Y$ if both
$X\leq CY$ and $Y\leq CX$.

For $x,y\in\mathbb R^n$, we define
\begin{equation*}
[x,y]:=\sqrt{1-2\langle x,y\rangle+|x|^2|y|^2}.
\end{equation*}
It is symmetric in its variables and the following equality holds
\begin{equation}\label{xysquare}
[x,y]^2=\lvert x-y\rvert^2+(1-\lvert x\rvert^2)(1-\lvert y\rvert^2).
\end{equation}
If either of the variables is $0$, then $[x,0]=[0,y]=1$; otherwise
\begin{equation}\label{bracketxy}
[x,y]=\Bigl\lvert\lvert y\rvert x-\frac{y}{\lvert y\rvert}\Bigr\rvert
=\Bigl\lvert\frac{x}{\lvert x\rvert}-\lvert x\rvert y\Bigr\rvert,
\end{equation}
and so
\begin{equation}\label{xybig}
1-\lvert x\rvert\lvert y\rvert
\leq [x,y]
\leq 1+\lvert x\rvert\lvert y\rvert
\qquad (x,y\in\mathbb B).
\end{equation}

\subsection{M\"{o}bius transformations and pseudo-hyperbolic metric}

For details about the facts listed in this subsection we refer
the reader to \cite{Ah} or \cite{St1}.

A M\"{o}bius transformation of $\hat{\mathbb R}^n=\mathbb R^n\cup\{\infty\}$ is a
finite composition of reflections (inversions) in spheres or planes.
We denote the group of all M\"{o}bius transformations
mapping $\mathbb B$ to $\mathbb B$ by $\mathcal M(\mathbb B)$.
For $a\in\mathbb B$, the mapping
\begin{equation}\label{definevarphia}
\varphi_a(x)=\frac{a\lvert x-a\rvert^2+(1-\lvert a\rvert^2)(a-x)}{[x,a]^2}
\qquad (x\in\mathbb B)
\end{equation}
is in $\mathcal M(\mathbb B)$, exchanges $a$ and $0$, and
satisfies $\varphi_a\circ\varphi_a=\text{Id}$.
The group $\mathcal M(\mathbb B)$ is generated by
$\{\varphi_a:a\in\mathbb B\}$ and orthogonal transformations.
A very useful identity involving $\varphi_a$ is
(\cite[Eqn. 2.1.7]{St1})
\begin{equation}\label{MobiusIdnt}
1-\lvert\varphi_a(x)\rvert^2
=\frac{(1-\lvert a\rvert^2)(1-\lvert x\rvert^2)}{[x,a]^2},
\end{equation}
and the Jacobian $J_{\varphi_a}$ satisfies (\cite[Theorem 3.3.1]{St1})
\begin{equation}\label{Jacobian}
\lvert J_{\varphi_a}(x)\rvert
=\frac{(1-\lvert\varphi_a(x)\rvert^2)^n}{(1-\lvert x\rvert^2)^n}.
%=\frac{(1-\lvert a\rvert^2)^n}{[a,x]^{2n}}.
\end{equation}

\begin{lemma}\label{La_varphia}
For $a,x\in\mathbb B$, the following equality holds
\begin{equation*}
[\varphi_a(x),a]=\frac{1-\lvert a\rvert^2}{[x,a]}.
\end{equation*}
\end{lemma}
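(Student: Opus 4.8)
The plan is to compute $[\varphi_a(x),a]^2$ directly and show it equals $(1-\lvert a\rvert^2)^2/[x,a]^2$; since both sides of the claimed identity are nonnegative, taking square roots then finishes the proof. The natural starting point is the expansion \eqref{xysquare} applied to the pair $(\varphi_a(x),a)$, namely
\[
[\varphi_a(x),a]^2=\lvert\varphi_a(x)-a\rvert^2+\bigl(1-\lvert\varphi_a(x)\rvert^2\bigr)\bigl(1-\lvert a\rvert^2\bigr).
\]
The second summand is handled at once by the M\"obius identity \eqref{MobiusIdnt}, which gives $1-\lvert\varphi_a(x)\rvert^2=(1-\lvert a\rvert^2)(1-\lvert x\rvert^2)/[x,a]^2$, so that this term equals $(1-\lvert a\rvert^2)^2(1-\lvert x\rvert^2)/[x,a]^2$.

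The substance of the argument is the first summand $\lvert\varphi_a(x)-a\rvert^2$. Here I would insert the explicit formula \eqref{definevarphia} for $\varphi_a(x)$ and place everything over the common denominator $[x,a]^2$, so that $\varphi_a(x)-a$ has numerator $a\lvert x-a\rvert^2+(1-\lvert a\rvert^2)(a-x)-a[x,a]^2$. Using $[x,a]^2=\lvert x-a\rvert^2+(1-\lvert x\rvert^2)(1-\lvert a\rvert^2)$ from \eqref{xysquare}, the two terms $a\lvert x-a\rvert^2-a[x,a]^2$ collapse to $-a(1-\lvert x\rvert^2)(1-\lvert a\rvert^2)$, and combining with $(1-\lvert a\rvert^2)(a-x)$ the whole numerator factors as $(1-\lvert a\rvert^2)(\lvert x\rvert^2a-x)$. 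Thus
\[
\varphi_a(x)-a=\frac{(1-\lvert a\rvert^2)\bigl(\lvert x\rvert^2a-x\bigr)}{[x,a]^2}.
\]

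The crucial algebraic observation is then that $\bigl\lvert\lvert x\rvert^2a-x\bigr\rvert^2=\lvert x\rvert^2\bigl(1-2\langle a,x\rangle+\lvert a\rvert^2\lvert x\rvert^2\bigr)=\lvert x\rvert^2[x,a]^2$, directly from the definition of $[\cdot,\cdot]$. This yields $\lvert\varphi_a(x)-a\rvert^2=(1-\lvert a\rvert^2)^2\lvert x\rvert^2/[x,a]^2$. Adding the two summands, the factors $\lvert x\rvert^2$ and $1-\lvert x\rvert^2$ combine to $1$, giving $[\varphi_a(x),a]^2=(1-\lvert a\rvert^2)^2/[x,a]^2$, as required. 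I do not expect a genuine obstacle: the computation is entirely elementary, and the only point demanding care is the bookkeeping in simplifying the numerator of $\varphi_a(x)-a$, where one uses the expansion of $[x,a]^2$ exactly once to cancel the $\lvert x-a\rvert^2$ contribution.
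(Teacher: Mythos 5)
Your proof is correct, but it takes a different route from the paper. The paper's argument is a two-line trick: it substitutes $\varphi_a(x)$ for $x$ in the M\"obius identity \eqref{MobiusIdnt} and uses the involution property $\varphi_a\circ\varphi_a=\mathrm{Id}$ to get $[\varphi_a(x),a]^2=(1-\lvert a\rvert^2)(1-\lvert\varphi_a(x)\rvert^2)/(1-\lvert x\rvert^2)$, then applies \eqref{MobiusIdnt} once more to eliminate $1-\lvert\varphi_a(x)\rvert^2$; no coordinate computation with the explicit formula \eqref{definevarphia} is needed. You instead expand $[\varphi_a(x),a]^2$ via \eqref{xysquare} and compute $\lvert\varphi_a(x)-a\rvert^2$ by brute force from \eqref{definevarphia}, arriving at the clean intermediate identity
\begin{equation*}
\varphi_a(x)-a=\frac{(1-\lvert a\rvert^2)\bigl(\lvert x\rvert^2a-x\bigr)}{[x,a]^2},
\qquad
\lvert\varphi_a(x)-a\rvert=\frac{(1-\lvert a\rvert^2)\lvert x\rvert}{[x,a]},
\end{equation*}
and your algebra checks out (the key cancellation $\bigl\lvert\lvert x\rvert^2a-x\bigr\rvert^2=\lvert x\rvert^2[x,a]^2$ is exactly right). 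What the paper's approach buys is brevity and independence from the explicit form of $\varphi_a$ (it would survive any other parametrization of the M\"obius group); what yours buys is that it only uses \eqref{MobiusIdnt} once, does not invoke $\varphi_a\circ\varphi_a=\mathrm{Id}$, and produces the displayed formula for $\varphi_a(x)-a$ as a byproduct, which is of independent interest (it immediately re-derives \eqref{pseudo} at the point pair $(\varphi_a(x),a)$).
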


\begin{proof}
Replacing $x$ in \eqref{MobiusIdnt} with $\varphi_a(x)$ and noting that
$\varphi_a\circ\varphi_a=\text{Id}$ shows
\begin{equation*}
[\varphi_a(x),a]^2
=\frac{(1-\lvert a\rvert^2)(1-\lvert \varphi_a(x)\rvert^2)}{1-\lvert x\rvert^2}.
\end{equation*}
Using \eqref{MobiusIdnt} again gives the result.
\end{proof}

For $a,b\in\mathbb B$, the pseudo-hyperbolic metric
$\rho(a,b)=\lvert\varphi_a(b)\rvert$ satisfies
\begin{equation}\label{pseudo}
\rho(a,b)=\frac{\lvert a-b\rvert}{[a,b]},
\end{equation}
by \eqref{MobiusIdnt} and \eqref{xysquare}.
It is bounded $0\leq \rho(a,b)<1$, and is M\"{o}bius invariant
in the sense that $\rho(\psi(a),\psi(b))=\rho(a,b)$
for every $\psi\in\mathcal M(\mathbb B)$.
It satisfies not only the triangle
inequality (\cite[Theorem 2.2.3]{St1}), but the following strong
triangle inequality.
\begin{lemma}\label{LstrongTri}
For $a,b,x\in\mathbb B$, the following inequalities hold
\begin{equation*}
\frac{\big\lvert\rho(a,x)-\rho(b,x)\big\rvert}{1-\rho(a,x)\rho(b,x)}
\leq \rho(a,b)
\leq\frac{\rho(a,x)+\rho(b,x)}{1+\rho(a,x)\rho(b,x)}.
\end{equation*}
\end{lemma}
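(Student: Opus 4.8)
The plan is to exploit the M\"obius invariance of $\rho$ to reduce to the case $x=0$. Applying $\varphi_x$ to each of the three points and using $\rho(\psi(a),\psi(b))=\rho(a,b)$ together with $\varphi_x(x)=0$, I would set $a'=\varphi_x(a)$ and $b'=\varphi_x(b)$, so that $\rho(a,x)=\lvert a'\rvert$, $\rho(b,x)=\lvert b'\rvert$, and $\rho(a,b)=\rho(a',b')$. Thus it suffices to prove, for all $a',b'\in\mathbb B$, that
\begin{equation*}
\frac{\big\lvert\lvert a'\rvert-\lvert b'\rvert\big\rvert}{1-\lvert a'\rvert\lvert b'\rvert}
\leq\rho(a',b')
\leq\frac{\lvert a'\rvert+\lvert b'\rvert}{1+\lvert a'\rvert\lvert b'\rvert}.
\end{equation*}

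Next I would write $s=\lvert a'\rvert$, $t=\lvert b'\rvert$ and use \eqref{pseudo} together with \eqref{xysquare} to express
\begin{equation*}
\rho(a',b')=\frac{\lvert a'-b'\rvert}{[a',b']}
=\frac{\lvert a'-b'\rvert}{\sqrt{\lvert a'-b'\rvert^2+(1-s^2)(1-t^2)}}.
\end{equation*}
Regarding the right-hand side as a function $g(d)=d/\sqrt{d^2+(1-s^2)(1-t^2)}$ of the quantity $d=\lvert a'-b'\rvert$, I would observe that $g(d)=\big(1+(1-s^2)(1-t^2)/d^2\big)^{-1/2}$ is increasing in $d$. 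Since the Euclidean triangle inequalities give $\lvert s-t\rvert\leq\lvert a'-b'\rvert\leq s+t$, monotonicity yields $g(\lvert s-t\rvert)\leq\rho(a',b')\leq g(s+t)$.

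The final step is the algebraic evaluation of $g$ at the two endpoints. I would expand
\begin{equation*}
(s-t)^2+(1-s^2)(1-t^2)=1-2st+s^2t^2=(1-st)^2,
\end{equation*}
so that $g(\lvert s-t\rvert)=\lvert s-t\rvert/(1-st)$, which is precisely the claimed lower bound; and similarly
\begin{equation*}
(s+t)^2+(1-s^2)(1-t^2)=1+2st+s^2t^2=(1+st)^2,
\end{equation*}
so that $g(s+t)=(s+t)/(1+st)$, the claimed upper bound. There is no genuine obstacle here: the only points requiring attention are the monotonicity of $g$ and the collapse of these two quadratic expressions into perfect squares, both of which are routine. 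The substantive idea is the initial normalization by $\varphi_x$, after which the inequality becomes a one-variable estimate on the Euclidean distance $\lvert a'-b'\rvert$.
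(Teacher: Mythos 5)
Your proof is correct. The opening move -- normalizing by $\varphi_x$ and using M\"obius invariance to reduce to $x=0$ -- is exactly the paper's (the paper states the $x=0$ case first and then performs the same reduction, but the idea is identical). Where you diverge is in the proof of the normalized inequality: you write $\rho(a',b')=d/\sqrt{d^2+(1-s^2)(1-t^2)}$ with $d=\lvert a'-b'\rvert$ via \eqref{pseudo} and \eqref{xysquare}, bound $d$ by the Euclidean triangle inequality $\lvert s-t\rvert\leq d\leq s+t$, and conclude by monotonicity of $g(d)=d/\sqrt{d^2+K}$. The paper instead squeezes $1-\lvert\varphi_{a'}(b')\rvert^2=(1-s^2)(1-t^2)/[a',b']^2$ using the bracket bound $1-st\leq[a',b']\leq 1+st$ from \eqref{xybig}, and then simplifies. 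The two arguments share the same algebraic core -- the perfect-square identities $(s\mp t)^2+(1-s^2)(1-t^2)=(1\mp st)^2$, and a triangle inequality (yours applied to $\lvert a'-b'\rvert$, the paper's hidden inside \eqref{xybig} via the representation \eqref{bracketxy}) -- so neither is more general or shorter; the trade-off is that your route makes the monotonicity step explicit through an auxiliary function, whereas the paper's route avoids any calculus-flavored argument by dividing two-sided bounds directly inside the M\"obius identity \eqref{MobiusIdnt}. One tiny point of care in your version: $g$ is defined and increasing on $d>0$ (with $g(0)=0$ handled trivially), and $1-st>0$ since $s,t<1$, so both endpoint evaluations are legitimate; as written this is fine.
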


\begin{proof}
By \eqref{MobiusIdnt} and \eqref{xybig}, we have
\begin{equation*}
\frac{(1-\lvert a\rvert^2)(1-\lvert b\rvert^2)}
{(1+\lvert a\rvert\lvert b\rvert)^2}
\leq 1-\lvert\varphi_a(b)\rvert^2
\leq\frac{(1-\lvert a\rvert^2)(1-\lvert b\rvert^2)}
{(1-\lvert a\rvert\lvert b\rvert)^2},
\end{equation*}
which after simplification implies
\begin{equation*}
\frac{\big\lvert\lvert a\rvert-\lvert b\rvert\big\rvert}
{1-\lvert a\rvert\lvert b\rvert}
\leq \lvert\varphi_a(b)\rvert
\leq\frac{\lvert a\rvert+\lvert b\rvert}
{1+\lvert a\rvert\lvert b\rvert}.
\end{equation*}
Since $\rho(a,b)=\lvert\varphi_a(b)\rvert$ and $\rho(a,0)=\lvert a\rvert$,
$\rho(b,0)=\lvert b\rvert$, this proves the lemma when $x=0$.
To obtain the general case apply this special case with $a,b$ replaced
by $\varphi_x(a),\varphi_x(b)$, write $0=\varphi_x(x)$, and
use the M\"{o}bius invariance of $\rho$.
\end{proof}

\begin{lemma}\label{Lratiobnd}
For $x,y\in\mathbb B$,
\begin{equation*}
1-\rho(x,y)
\leq\frac{1-\lvert x\rvert^2}{[x,y]}
\leq 1+\rho(x,y).
\end{equation*}
\end{lemma}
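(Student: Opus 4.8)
The plan is to collapse the two-sided estimate into a single inequality and then finish with the reverse triangle inequality. Since $\rho(x,y)=|x-y|/[x,y]$ by \eqref{pseudo} and $[x,y]>0$, multiplying the desired chain through by $[x,y]$ shows that the assertion is equivalent to
\[
[x,y]-|x-y|\le 1-|x|^2\le [x,y]+|x-y|,
\]
which is just the single statement $\big|[x,y]-(1-|x|^2)\big|\le |x-y|$. So the first step is to record this reformulation.

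The case $x=0$ is immediate: then $[0,y]=1=1-|0|^2$, so the left-hand side is $0\le |y|=|x-y|$. For $x\ne 0$ the idea is to exhibit both $[x,y]$ and $1-|x|^2$ as distances from a single fixed point, namely the unit vector $u=x/|x|$. By \eqref{bracketxy} we have $[x,y]=\big|\,u-|x|y\,\big|$, and a short computation gives $1-|x|^2=\big|\,u-|x|x\,\big|$ (factor $u-|x|x=(1-|x|^2)\,x/|x|$ and take norms). The reverse triangle inequality then yields
\[
\big|[x,y]-(1-|x|^2)\big|=\big|\,|u-|x|y|-|u-|x|x|\,\big|\le \big|\,|x|x-|x|y\,\big|=|x|\,|x-y|\le |x-y|,
\]
which is exactly the reformulated inequality.

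The only real obstacle is spotting the reformulation and the common reference point $u=x/|x|$; once both quantities are written as $|u-|x|y|$ and $|u-|x|x|$, the reverse triangle inequality does all the work, and the factor $|x|<1$ is precisely what absorbs the remaining discrepancy. Everything else is routine. As an equivalent route one could instead use the inversion $x^{*}=x/|x|^{2}$ and the identities $[x,y]=|x|\,|y-x^{*}|$ and $1-|x|^{2}=|x|\,|x-x^{*}|$; this is the same computation in different notation and leads to the same one-line estimate.
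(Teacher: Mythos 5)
Your proof is correct and is essentially the same argument as the paper's: your reverse triangle inequality applied to the vectors $u-|x|y$ and $u-|x|x$ (with $u=x/|x|$) is precisely the paper's triangle inequality for the three points $x^*=x/|x|^2$, $y$, $x$ multiplied through by $|x|$, as you yourself observe in your closing remark. Both arguments treat $x=0$ separately, rest on the identities \eqref{bracketxy} and \eqref{pseudo}, and use $|x|<1$ to absorb the final factor.
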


\begin{proof}
The lemma clearly holds when $x=0$.
Otherwise, let $x^*:=x/\lvert x\rvert^2$ be the inversion of
$x$ with respect to the unit sphere $\mathbb S$.
Multiply the triangle inequality
\begin{equation*}
\lvert x^*-y\rvert-\lvert y-x\rvert
\leq \lvert x^*-x\rvert
\leq\lvert x^*-y\rvert+\lvert y-x\rvert
\end{equation*}
with $\lvert x\rvert$.
Noting that
$\lvert x\rvert\lvert x^*-y\rvert=[x,y]$
by \eqref{bracketxy}, and
$\lvert x\rvert\lvert x^*-x\rvert=1-\lvert x\rvert^2$,
we deduce
\begin{equation*}
[x,y]-\lvert x\rvert\lvert y-x\rvert
\leq 1-\lvert x\rvert^2
\leq [x,y]+\lvert x\rvert\lvert y-x\rvert.
\end{equation*}
The lemma follows from the facts that
$\lvert y-x\rvert=\rho(x,y)[x,y]$ by \eqref{pseudo},
and $\lvert x\rvert<1$.
\end{proof}

The following lemma is a slight modification of \cite[Lemma 2.1]{CKL} and
immediately follows from Lemma \ref{Lratiobnd}.

\begin{lemma}\label{Lratiosquare}
For $x,y\in\mathbb B$,
\begin{equation*}
\frac{1-\rho(x,y)}{1+\rho(x,y)}
\leq\frac{1-\lvert x\rvert^2}{1-\lvert y\rvert^2}
\leq\frac{1+\rho(x,y)}{1-\rho(x,y)}.
\end{equation*}
\end{lemma}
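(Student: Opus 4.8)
The plan is to obtain the two-sided estimate directly from Lemma \ref{Lratiobnd}, using only the symmetry of the bracket $[\cdot,\cdot]$ and of $\rho$. Abbreviate $\rho=\rho(x,y)$. Lemma \ref{Lratiobnd} states
\begin{equation*}
1-\rho\leq\frac{1-\lvert x\rvert^2}{[x,y]}\leq 1+\rho.
\end{equation*}
Since $[x,y]=[y,x]$ and $\rho(x,y)=\rho(y,x)$ (the latter being immediate from \eqref{pseudo}), applying the same lemma with the roles of $x$ and $y$ exchanged gives
\begin{equation*}
1-\rho\leq\frac{1-\lvert y\rvert^2}{[x,y]}\leq 1+\rho.
\end{equation*}

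First I would cancel the common factor $[x,y]$ by forming a ratio. Setting $A=(1-\lvert x\rvert^2)/[x,y]$ and $B=(1-\lvert y\rvert^2)/[x,y]$, we have $A/B=(1-\lvert x\rvert^2)/(1-\lvert y\rvert^2)$, while both $A$ and $B$ lie in the interval $[\,1-\rho,\,1+\rho\,]$ by the two displays above. Because $0\leq\rho<1$, the lower endpoint satisfies $1-\rho>0$, so $B>0$ and dividing is justified. Bounding $A$ above and $B$ below yields $A/B\leq(1+\rho)/(1-\rho)$; bounding $A$ below and $B$ above yields $A/B\geq(1-\rho)/(1+\rho)$. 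Combining these two inequalities is exactly the asserted claim.

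There is no genuine obstacle here; the only care needed is the direction-bookkeeping when dividing the two inequality chains, together with the observation that $1-\rho>0$ legitimizes the step $B\geq 1-\rho>0$ used as a positive lower bound for the denominator. No estimates or identities beyond Lemma \ref{Lratiobnd} and the symmetries of $[\cdot,\cdot]$ and $\rho$ are required.
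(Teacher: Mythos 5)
Your proof is correct and follows exactly the route the paper intends: the paper gives no written-out argument but states that Lemma \ref{Lratiosquare} ``immediately follows from Lemma \ref{Lratiobnd},'' and your argument---applying Lemma \ref{Lratiobnd} twice via the symmetry of $[\cdot,\cdot]$ and $\rho$, then dividing, using $1-\rho(x,y)>0$ to justify the quotient---is precisely the expected spelling-out of that remark.
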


The next lemma is proved in \cite[Lemma 2.2]{CKL}.

\begin{lemma}\label{Lratiobracket}
For $a,x,y\in\mathbb B$,
\begin{equation*}
\frac{1-\rho(x,y)}{1+\rho(x,y)}
\leq\frac{[x,a]}{[y,a]}
\leq\frac{1+\rho(x,y)}{1-\rho(x,y)}.
\end{equation*}
\end{lemma}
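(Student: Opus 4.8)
The plan is to reduce the statement to the Möbius identity \eqref{MobiusIdnt} together with Lemma \ref{Lratiosquare}. First I would record the elementary consequence of \eqref{MobiusIdnt} and the definition $\rho(u,a)=\lvert\varphi_a(u)\rvert$ that, for all $u,a\in\mathbb B$,
\begin{equation*}
[u,a]^2=\frac{(1-\lvert u\rvert^2)(1-\lvert a\rvert^2)}{1-\rho(u,a)^2}.
\end{equation*}
Writing this for $u=x$ and for $u=y$ and dividing, the common factor $1-\lvert a\rvert^2$ cancels and I obtain
\begin{equation*}
\frac{[x,a]^2}{[y,a]^2}=\frac{1-\lvert x\rvert^2}{1-\lvert y\rvert^2}\cdot\frac{1-\rho(y,a)^2}{1-\rho(x,a)^2}.
\end{equation*}
Thus the entire problem becomes to bound each of these two factors by $\frac{1+\rho(x,y)}{1-\rho(x,y)}$.

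The first factor is bounded immediately by Lemma \ref{Lratiosquare}. For the second factor the key observation is that it is again an instance of Lemma \ref{Lratiosquare}, transported by the M\"obius map $\varphi_a$: since $\lvert\varphi_a(u)\rvert=\rho(u,a)$ and $\rho$ is M\"obius invariant, applying Lemma \ref{Lratiosquare} to the two points $\varphi_a(y)$ and $\varphi_a(x)$ gives
\begin{equation*}
\frac{1-\rho(y,a)^2}{1-\rho(x,a)^2}=\frac{1-\lvert\varphi_a(y)\rvert^2}{1-\lvert\varphi_a(x)\rvert^2}\leq\frac{1+\rho(\varphi_a(y),\varphi_a(x))}{1-\rho(\varphi_a(y),\varphi_a(x))}=\frac{1+\rho(x,y)}{1-\rho(x,y)}.
\end{equation*}
In other words, Lemma \ref{Lratiosquare} is exactly the special case $a=0$ of the second factor (using $\lvert u\rvert=\rho(u,0)$), and the general base point $a$ is reached by conjugating with $\varphi_a$.

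Multiplying the two bounds yields $\frac{[x,a]^2}{[y,a]^2}\leq\bigl(\frac{1+\rho(x,y)}{1-\rho(x,y)}\bigr)^2$, and taking square roots produces the claimed upper bound $\frac{[x,a]}{[y,a]}\leq\frac{1+\rho(x,y)}{1-\rho(x,y)}$; the lower bound then follows by interchanging $x$ and $y$ and using the symmetry of $\rho$ and of $[\cdot,\cdot]$. The one point that might look worrying is that estimating each factor separately by $\frac{1+\rho}{1-\rho}$ appears to cost a power, but this is harmless, since we are estimating the \emph{square} $[x,a]^2/[y,a]^2$ and the final square root restores the correct first power. I therefore expect no genuine obstacle here; the only step requiring care is the bookkeeping in the M\"obius transport of the second factor, namely the identifications $\lvert\varphi_a(u)\rvert=\rho(u,a)$ and $\rho(\varphi_a(y),\varphi_a(x))=\rho(x,y)$.
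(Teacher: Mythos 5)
Your proof is correct, but the comparison here is of a special kind: the paper does not prove Lemma \ref{Lratiobracket} at all --- it simply quotes it from \cite[Lemma 2.2]{CKL}. Your argument therefore supplies something the paper lacks, namely a self-contained derivation from tools already on hand. The identity $[u,a]^2=(1-\lvert u\rvert^2)(1-\lvert a\rvert^2)/(1-\rho(u,a)^2)$ is indeed an immediate rewriting of \eqref{MobiusIdnt}, using that $\rho(u,a)=\lvert\varphi_a(u)\rvert$ (the symmetry of $\rho$ is guaranteed by \eqref{pseudo}), and it yields the factorization
\begin{equation*}
\frac{[x,a]^2}{[y,a]^2}
=\frac{1-\lvert x\rvert^2}{1-\lvert y\rvert^2}
\cdot\frac{1-\rho(y,a)^2}{1-\rho(x,a)^2},
\end{equation*}
in which each factor is an instance of Lemma \ref{Lratiosquare}: the first directly, the second after transport by $\varphi_a$, which is legitimate because $\varphi_a\in\mathcal M(\mathbb B)$ and $\rho$ is M\"obius invariant, so $\rho(\varphi_a(y),\varphi_a(x))=\rho(y,x)=\rho(x,y)$. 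Your handling of the apparent loss of a power is also right: the two factor bounds multiply to $\bigl((1+\rho(x,y))/(1-\rho(x,y))\bigr)^2$, but since you are bounding the \emph{square} of the target ratio, the final square root restores the first power, and the lower bound follows by exchanging $x$ and $y$ and using the symmetry of $\rho$ and of $[\cdot,\cdot]$. All denominators are harmless since $[y,a]\geq 1-\lvert y\rvert\lvert a\rvert>0$ by \eqref{xybig} and $\rho<1$ on $\mathbb B\times\mathbb B$. What your approach buys is conceptual economy: it exhibits Lemma \ref{Lratiobracket} as the M\"obius-invariant upgrade of Lemma \ref{Lratiosquare} (one copy of that lemma at the base point $0$, one copy conjugated to the base point $a$), making the paper self-contained at this point instead of deferring to the literature.
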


Let $\mathbb B_r=\{x\in\mathbb R^n:\lvert x\rvert<r\}$.
The pseudo-hyperbolic ball
$E_r(a)=\{x\in\mathbb B:\rho(x,a)<r\}=\varphi_a(\mathbb B_r)$ is
also a Euclidean ball with (see \cite[Theorem 2.2.2]{St1})
\begin{equation}\label{PseudoBall}
\text{center}=\frac{(1-r^2)a}{1-\lvert a\rvert^2r^2}
\quad \text{and}\quad
\text{radius}=\frac{(1-\lvert a\rvert^2)r}{1-\lvert a\rvert^2r^2}.
\end{equation}
The measure $d\tau(x)=(1-\lvert x\rvert^2)^{-n}d\nu(x)$
is the invariant measure on $\mathbb B$ and
the volume $\tau(E_r(a))=\tau(\mathbb B_r)$ is independent
of $a\in\mathbb B$.

\subsection{Separated sequences and lattices}

A sequence $\{a_m\}$ of points of $\mathbb B$ is called
$r$-separated if $\rho(a_k,a_m)\geq r$ for every $k\ne m$.
If, in addition, $\bigcup_{m=1}^\infty E_r(a_m)=\mathbb B$, then
$\{a_m\}$ is called an $r$-lattice.
As is explained in \cite[p.~18]{CR}, for every $0<r<1$, there
exists an $r$-lattice and every $r$-separated sequence can be
completed to an $r$-lattice.

\begin{lemma}\label{LEm}
Let $\{a_m\}$ be an $r$-lattice.
There exists a sequence $\{E_m\}$ of disjoint sets such that
$\bigcup_{m=1}^\infty E_m=\mathbb B$ and
\begin{equation}\label{Emsubset}
  E_{r/2}(a_m)\subset E_m\subset E_r(a_m).
\end{equation}
\end{lemma}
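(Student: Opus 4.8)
The plan is to start from the covering $\{E_r(a_m)\}$ and carve out disjoint pieces by a greedy disjointification, taking care that the smaller balls $E_{r/2}(a_m)$ survive intact. The first fact I would record is that $r$-separation forces the balls $E_{r/2}(a_m)$ to be pairwise disjoint: if some $x$ lay in $E_{r/2}(a_k)\cap E_{r/2}(a_m)$ with $k\neq m$, then the right-hand inequality of Lemma \ref{LstrongTri} would give $\rho(a_k,a_m)\leq\rho(a_k,x)+\rho(x,a_m)<r/2+r/2=r$, contradicting $\rho(a_k,a_m)\geq r$.

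Next I would introduce the intermediate sets
\[
F_m:=E_r(a_m)\setminus\bigcup_{j\neq m}E_{r/2}(a_j).
\]
By construction $F_m\subset E_r(a_m)$. Moreover $E_{r/2}(a_m)\subset F_m$: indeed $E_{r/2}(a_m)\subset E_r(a_m)$, while the disjointness just proved shows that $E_{r/2}(a_m)$ meets none of the removed balls $E_{r/2}(a_j)$, $j\neq m$. The same disjointness shows, crucially, that $F_j\cap E_{r/2}(a_m)=\emptyset$ whenever $j\neq m$, since $E_{r/2}(a_m)$ is explicitly deleted in the definition of $F_j$. Finally $\bigcup_m F_m=\mathbb B$: given $x\in\mathbb B$, the lattice property puts $x$ in some $E_r(a_m)$; because the $E_{r/2}(a_j)$ are disjoint, $x$ lies in at most one of them, say $E_{r/2}(a_k)$, or in none, and then $x\in F_k$, respectively $x\in F_m$, since in either case $x$ avoids every $E_{r/2}(a_j)$ whose index differs from the chosen one.

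The last step is to disjointify the $F_m$ while preserving both inclusions. I would set
\[
E_m:=F_m\setminus\bigcup_{j<m}F_j.
\]
These sets are pairwise disjoint by construction and satisfy $\bigcup_m E_m=\bigcup_m F_m=\mathbb B$. The upper inclusion $E_m\subset F_m\subset E_r(a_m)$ is immediate. For the lower inclusion I would invoke the key point from the previous paragraph: $E_{r/2}(a_m)$ is disjoint from every $F_j$ with $j\neq m$, so removing $\bigcup_{j<m}F_j$ from $F_m$ deletes no point of $E_{r/2}(a_m)$; combined with $E_{r/2}(a_m)\subset F_m$ this yields $E_{r/2}(a_m)\subset E_m$, as required by \eqref{Emsubset}.

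There is essentially no hard analytic step here; the only place requiring care — and the point on which the whole argument hinges — is the bookkeeping that guarantees the greedy removal never eats into the core balls $E_{r/2}(a_m)$. This is secured entirely by the disjointness $F_j\cap E_{r/2}(a_m)=\emptyset$ for $j\neq m$, which in turn rests on the $r$-separation of the lattice.
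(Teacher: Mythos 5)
Your proof is correct and follows essentially the same route as the paper: a greedy disjointification of the covering $\{E_r(a_m)\}$ in which the core balls $E_{r/2}(a_m)$ are protected by their pairwise disjointness, which follows from the $r$-separation. The paper compresses this into a single recursive definition ($E_m=E_r(a_m)$ minus the previously built $E_i$ and the later cores $E_{r/2}(a_i)$, $i>m$), whereas you split it into two stages ($F_m$, then disjointification), but the resulting sets and the underlying idea are the same, and your write-up supplies the verifications the paper leaves implicit.
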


\begin{proof}
Let $E_1=E_r(a_1)\backslash\bigcup_{m=2}^\infty E_{r/2}(a_m)$
and given $E_1,\dots,E_{m-1}$, let
\begin{equation*}
E_m=E_r(a_m)\backslash\biggl(\bigcup_{i=1}^{m-1} E_i\
\bigcup\bigcup_{i=m+1}^\infty E_{r/2}(a_m)\biggr).\qedhere
\end{equation*}
\end{proof}

The next lemma follows from a standard invariant volume argument.

\begin{lemma}\label{LNballs}
Let $0<r,\delta<1$ and $\{a_m\}$ be $r$-separated.
There exists $N$ depending only on $n,r,\delta$ such that
every $x\in\mathbb B$ belongs to at most $N$ of the balls
$E_\delta(a_m)$.
\end{lemma}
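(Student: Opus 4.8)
The plan is to combine disjointness of rescaled pseudo-hyperbolic balls with a volume comparison carried out in the M\"obius-invariant measure $d\tau=(1-\lvert x\rvert^2)^{-n}\,d\nu$. Fix $x\in\mathbb B$ and let $M=\{m:x\in E_\delta(a_m)\}$ be the set of indices of the balls containing $x$; the goal is to bound $\#M$ by a constant depending only on $n,r,\delta$.

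First I would record that the balls $\{E_{r/2}(a_m)\}$ are pairwise disjoint. Indeed, if some $y$ belonged to both $E_{r/2}(a_k)$ and $E_{r/2}(a_m)$ with $k\ne m$, then the ordinary triangle inequality for $\rho$ would give $\rho(a_k,a_m)\le\rho(a_k,y)+\rho(y,a_m)<r$, contradicting $r$-separation. Since $\tau$ is M\"obius invariant, $\tau(E_{r/2}(a_m))=\tau(\mathbb B_{r/2})$ for every $m$, a fixed positive number depending only on $n$ and $r$.

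Next I would show that all the balls $E_{r/2}(a_m)$ with $m\in M$ sit inside a single pseudo-hyperbolic ball about $x$ whose radius is strictly less than $1$. Here it is essential to use the strong triangle inequality of Lemma \ref{LstrongTri} rather than a naive additive bound: for $m\in M$ and $y\in E_{r/2}(a_m)$, applying the upper estimate with intermediate point $a_m$ and using $\rho(x,a_m)<\delta$, $\rho(y,a_m)<r/2$ together with the monotonicity of $(s,t)\mapsto(s+t)/(1+st)$ in each variable on $[0,1)$ yields
\begin{equation*}
\rho(x,y)\le\frac{\rho(x,a_m)+\rho(y,a_m)}{1+\rho(x,a_m)\rho(y,a_m)}
<\frac{\delta+r/2}{1+\delta r/2}=:t<1 .
\end{equation*}
Thus $E_{r/2}(a_m)\subset E_t(x)$ for every $m\in M$, with $t\in(0,1)$ depending only on $r,\delta$.

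Finally I would count volumes. Since the balls $\{E_{r/2}(a_m):m\in M\}$ are disjoint and all contained in $E_t(x)$, invariance of $\tau$ gives
\begin{equation*}
(\#M)\,\tau(\mathbb B_{r/2})=\sum_{m\in M}\tau(E_{r/2}(a_m))\le\tau(E_t(x))=\tau(\mathbb B_t),
\end{equation*}
so $\#M\le\tau(\mathbb B_t)/\tau(\mathbb B_{r/2})$, and one may take $N$ to be the integer part of this ratio; both volumes depend only on $n,r,\delta$. The only real obstacle is keeping the enclosing radius below $1$: the ordinary triangle inequality would only give the useless quantity $\delta+r/2$, which can exceed $1$ and yield no containment, whereas the strong triangle inequality of Lemma \ref{LstrongTri} is precisely what forces the target ball $E_t(x)$ to remain inside $\mathbb B$ and makes the volume argument go through.
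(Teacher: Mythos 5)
Your proof is correct and follows essentially the same route as the paper's: both use the strong triangle inequality of Lemma \ref{LstrongTri} to show that the disjoint balls $E_{r/2}(a_m)$ with $\rho(x,a_m)<\delta$ all lie in $E_t(x)$ with $t=(\delta+r/2)/(1+\delta r/2)<1$, and then compare $\tau$-volumes using M\"obius invariance. Your write-up merely makes explicit some details (the disjointness of the $E_{r/2}(a_m)$ and the monotonicity of $(u,v)\mapsto(u+v)/(1+uv)$) that the paper leaves implicit.
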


\begin{proof}
Suppose $x\in\mathbb B$ belongs to $N_x$ of the balls $E_\delta(a_m)$.
Set
\begin{equation*}
s:=\frac{\delta+r/2}{1+\delta r/2}<1,
\end{equation*}
where the last inequality follows from $(1-\delta)(1-r/2)>0$.
If $\rho(x,a_m)<\delta$, then by Lemma \ref{LstrongTri},
$E_{r/2}(a_m)\subset E_x(s)$.
Since the balls $E_{r/2}(a_m)$ are disjoint and
$\tau$ is the invariant measure, it follows that
$N_x\,\tau(\mathbb B_{r/2})\leq\tau(\mathbb B_s)$.
\end{proof}

\begin{lemma}\label{LSumam}
Let $\gamma\in\mathbb R$ and $0<r<1$.
\begin{enumerate}
\item[(a)] If $\{a_m\}$ is $r$-separated and $\gamma>n-1$, then
$\sum_{m=1}^\infty (1-\lvert a_m\rvert^2)^\gamma<\infty$.
\item[(b)] If $\{a_m\}$ is an $r$-lattice, then
$\sum_{m=1}^\infty (1-\lvert a_m\rvert^2)^\gamma<\infty$ if and only if
$\gamma>n-1$.
\end{enumerate}
\end{lemma}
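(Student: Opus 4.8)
The plan is to compare the sum $\sum_m (1-\lvert a_m\rvert^2)^\gamma$ with the integral $\int_{\mathbb B}(1-\lvert x\rvert^2)^\gamma\,d\tau(x)$, where $d\tau(x)=(1-\lvert x\rvert^2)^{-n}\,d\nu(x)$ is the invariant measure. The whole argument rests on two elementary observations. First, by Lemma \ref{Lratiosquare}, whenever $x\in E_\delta(a_m)$ we have $1-\lvert x\rvert^2\sim 1-\lvert a_m\rvert^2$ with comparability constants depending only on $\delta$; hence $(1-\lvert x\rvert^2)^\gamma\sim(1-\lvert a_m\rvert^2)^\gamma$ on $E_\delta(a_m)$, the constants now also depending on the sign of $\gamma$. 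Second, since $\tau$ is M\"obius invariant, $\tau(E_\delta(a_m))=\tau(\mathbb B_\delta)$ is a fixed constant independent of $m$. Combining these gives
\begin{equation*}
\int_{E_\delta(a_m)}(1-\lvert x\rvert^2)^\gamma\,d\tau(x)\sim(1-\lvert a_m\rvert^2)^\gamma,
\end{equation*}
with constants depending only on $n,\gamma,\delta$. Finally I would record the standard fact that $\int_{\mathbb B}(1-\lvert x\rvert^2)^\gamma\,d\tau(x)=\int_{\mathbb B}(1-\lvert x\rvert^2)^{\gamma-n}\,d\nu(x)$ is finite if and only if $\gamma-n>-1$, that is $\gamma>n-1$ (seen at once in polar coordinates).

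For part (a) I would use that an $r$-separated sequence has pairwise disjoint balls $E_{r/2}(a_m)$: if $x$ lay in both $E_{r/2}(a_k)$ and $E_{r/2}(a_m)$, then the strong triangle inequality of Lemma \ref{LstrongTri} would force $\rho(a_k,a_m)<r$, contradicting separation. Taking $\delta=r/2$ in the displayed comparison and summing over this disjoint family yields
\begin{equation*}
\sum_{m=1}^\infty(1-\lvert a_m\rvert^2)^\gamma
\sim\sum_{m=1}^\infty\int_{E_{r/2}(a_m)}(1-\lvert x\rvert^2)^\gamma\,d\tau(x)
\leq\int_{\mathbb B}(1-\lvert x\rvert^2)^\gamma\,d\tau(x),
\end{equation*}
which is finite when $\gamma>n-1$.

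For part (b) the ``if'' direction is immediate from (a), since an $r$-lattice is in particular $r$-separated. For ``only if'' I would use the covering property $\bigcup_m E_r(a_m)=\mathbb B$, so that $\mathbf 1_{\mathbb B}\leq\sum_m\mathbf 1_{E_r(a_m)}$. Taking $\delta=r$ in the displayed comparison and integrating gives
\begin{equation*}
\int_{\mathbb B}(1-\lvert x\rvert^2)^\gamma\,d\tau(x)
\leq\sum_{m=1}^\infty\int_{E_r(a_m)}(1-\lvert x\rvert^2)^\gamma\,d\tau(x)
\sim\sum_{m=1}^\infty(1-\lvert a_m\rvert^2)^\gamma.
\end{equation*}
If $\gamma\leq n-1$ the left-hand integral diverges, forcing the sum to diverge as well; contrapositively, convergence of the sum forces $\gamma>n-1$.

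None of the steps is a genuine obstacle; the only point requiring a little care is bookkeeping the comparability constants, which depend on $r$ (through $\delta=r/2$ or $\delta=r$) and on the sign of $\gamma$, but this is harmless since $r$ is fixed. The essential content is simply that the invariant measure assigns equal mass to all pseudo-hyperbolic balls of a fixed radius, so the separated/lattice sum behaves like a Riemann sum for the weighted-volume integral.
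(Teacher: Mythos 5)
Your proof is correct and takes essentially the same approach as the paper: both compare the sum to the integral $\int_{\mathbb B}(1-\lvert x\rvert^2)^{\gamma-n}\,d\nu(x)$ using the equivalence $1-\lvert x\rvert^2\sim 1-\lvert a_m\rvert^2$ on pseudo-hyperbolic balls together with the fact that such balls have comparable (invariant) volume. The only cosmetic difference is in part (b), where the paper uses the disjoint partition sets $E_m$ of Lemma \ref{LEm} to get a two-sided comparison in one step, while you obtain the upper bound from disjointness of the balls $E_{r/2}(a_m)$ and the lower bound from the covering $\bigcup_m E_r(a_m)=\mathbb B$; both arguments rest on exactly the same facts.
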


\begin{proof}
To see part (a), we note that
\begin{equation}\label{Er2sim}
\int_{E_{r/2}(a_m)}(1-\lvert y\rvert^2)^{\gamma-n}\,d\nu(y)
\sim(1-\lvert a_m\rvert^2)^\gamma,
\end{equation}
where the implied constants depend only on the fixed parameters
$n,\gamma,r$ and are independent of $a_m$.
This is true because for $y\in E_{r/2}(a_m)$, we have
$(1-\lvert y\rvert^2)\sim(1-\lvert a_m\rvert^2)$ by
Lemma \ref{Lratiosquare} and
$\nu(E_{r/2}(a_m))\sim (1-\lvert a_m\rvert^2)^n$ by \eqref{PseudoBall}.
Since the balls $E_{r/2}(a_m)$ are disjoint, we obtain that
\begin{equation*}
\sum_{m=1}^\infty (1-\lvert a_m\rvert^2)^\gamma
\lesssim\int_{\mathbb B} (1-\lvert y\rvert^2)^{\gamma-n}\,d\nu(y).
\end{equation*}
If $\gamma>n-1$, then the above integral is finite.

For part (b), let $E_m$ be as given in Lemma \ref{LEm}.
By \eqref{Emsubset}, we similarly have
\begin{equation}\label{Emsim}
\int_{E_m}(1-\lvert y\rvert^2)^{\gamma-n}\,d\nu(y)
\sim (1-\lvert a_m\rvert^2)^\gamma
\end{equation}
and therefore
$$\sum_{m=1}^\infty (1-\lvert a_m\rvert^2)^\gamma
\sim \sum_{m=1}^\infty\int_{E_m}(1-\lvert y\rvert^2)^{\gamma-n}\,d\nu(y)
=\int_{\mathbb B} (1-\lvert y\rvert^2)^{\gamma-n}\,d\nu(y).$$
The last integral is finite if and only if $\gamma>n-1$.
\end{proof}

\subsection{Estimates of reproducing kernels and projection}

The following upper estimates of the reproducing kernels
$\mathcal R_\alpha$ of $\mathcal H$-harmonic Bergman spaces
are proved in \cite[Theorem 1.2]{U}.
\begin{lemma}\label{LEstR}
For $\alpha>-1$, there exists a constant $C=C(\alpha,n)$ such that
for every $x,y\in\mathbb B$,
\begin{enumerate}
  \item[(a)] $\lvert\mathcal R_\alpha(x,y)\rvert\leq\dfrac{C}{[x,y]^{\alpha+n}}$,
  \item[(b)] $\lvert\nabla_1 \mathcal R_\alpha(x,y)\rvert
  \leq\dfrac{C}{[x,y]^{\alpha+n+1}}$.
\end{enumerate}
Here $\nabla_1$ means the gradient is taken with respect to the
first variable.
\end{lemma}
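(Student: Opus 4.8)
The plan is to compute $\mathcal R_\alpha$ explicitly and estimate the resulting series, since the Möbius-covariance argument that trivializes such bounds for holomorphic or fully invariant kernels is unavailable here: the natural candidate isometry $f\mapsto (f\circ\varphi_a)\,w_a$ with $w_a(x)=(1-\lvert a\rvert^2)^{(\alpha+n)/2}/[x,a]^{\alpha+n}$ does make the weighted $L^2$-norm invariant (one checks this from \eqref{MobiusIdnt} and \eqref{Jacobian}), but multiplication by $w_a$ destroys $\mathcal H$-harmonicity, so it does not act on $\mathcal B^2_\alpha$. As a first, insufficient, bound, a standard subharmonicity estimate over a pseudo-hyperbolic ball gives $\lvert f(x)\rvert\lesssim(1-\lvert x\rvert^2)^{-(\alpha+n)/2}\|f\|_{\mathcal B^2_\alpha}$, hence $\mathcal R_\alpha(x,x)\lesssim(1-\lvert x\rvert^2)^{-(\alpha+n)}$, and Cauchy--Schwarz applied to the reproducing identity $\mathcal R_\alpha(x,y)=\int_{\mathbb B}\mathcal R_\alpha(x,z)\mathcal R_\alpha(y,z)\,d\nu_\alpha(z)$ yields only $\lvert\mathcal R_\alpha(x,y)\rvert\lesssim[(1-\lvert x\rvert^2)(1-\lvert y\rvert^2)]^{-(\alpha+n)/2}$. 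Since $[x,y]^2\ge(1-\lvert x\rvert^2)(1-\lvert y\rvert^2)$ by \eqref{xysquare}, this is weaker than (a); the whole difficulty is to capture the off-diagonal decay encoded by $[x,y]$.

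To expose that decay I would decompose $\mathcal B^2_\alpha$ into orthogonal pieces indexed by spherical-harmonic degree. Writing $x=r\xi$ and separating variables, $\mathcal H$-harmonicity of $f(r)Y_k(\xi)$ for a degree-$k$ spherical harmonic $Y_k$ forces, after dividing the formula for $\Delta_h f$ by $(1-r^2)$, the ODE $(1-r^2)\big(f''+\tfrac{n-1}{r}f'-\tfrac{k(k+n-2)}{r^2}f\big)+2(n-2)rf'=0$, whose solution regular at the origin is $S_k(r)=r^k\,{}_2F_1(a_k,b_k;c_k;r^2)$ with $c_k=k+n/2$ and parameters $a_k,b_k$ depending on $n,k$. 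An orthonormal basis of $\mathcal B^2_\alpha$ is then $\{\gamma_k^{-1/2}S_k(r)Y_k^{(j)}(\xi)\}$ with $\gamma_k=\int_0^1\lvert S_k(r)\rvert^2(1-r^2)^\alpha r^{n-1}\,dr$ and $\{Y_k^{(j)}\}_j$ an orthonormal basis of degree-$k$ harmonics. Summing $\mathcal R_\alpha(x,y)=\sum_{k,j}\gamma_k^{-1}S_k(\lvert x\rvert)S_k(\lvert y\rvert)Y_k^{(j)}(\xi)\overline{Y_k^{(j)}(\eta)}$ and applying the Gegenbauer addition formula collapses the inner sum to a single zonal term, giving $\mathcal R_\alpha(x,y)=\sum_k b_k\,S_k(\lvert x\rvert)S_k(\lvert y\rvert)\,C_k^{\lambda}(\langle\xi,\eta\rangle)$ with $\lambda=(n-2)/2$.

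The decisive structural observation is that the Gegenbauer generating function $\sum_k C_k^\lambda(t)w^k=(1-2tw+w^2)^{-\lambda}$ evaluated at $t=\langle\xi,\eta\rangle$, $w=\lvert x\rvert\lvert y\rvert$ produces exactly $1-2\langle x,y\rangle+\lvert x\rvert^2\lvert y\rvert^2=[x,y]^2$; thus powers of $[x,y]$ arise naturally once the series is summed, and the target exponent $\alpha+n$ is what the radial factors must conspire to produce. The main obstacle is the asymptotic matching that turns this heuristic into the sharp bound: I would determine the large-$k$ and boundary ($r\to1$) behavior of $S_k$ and of the normalizing constant $\gamma_k$ (governed by $c_k-a_k-b_k$ through the hypergeometric connection formulae), together with $\dim\mathcal H_k\sim k^{n-2}$ and $C_k^\lambda(1)\sim k^{2\lambda-1}$, so that $b_kS_k(\lvert x\rvert)S_k(\lvert y\rvert)$ is comparable to the coefficients of a generating series of the $(1-2tw+w^2)^{-(\alpha+n)/2}$ type. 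Dominating the zonal sum by such an expression then yields $\lvert\mathcal R_\alpha(x,y)\rvert\lesssim[x,y]^{-(\alpha+n)}$, uniformly in $x,y$, with the constant depending only on $n,\alpha$. The regime $\lvert x\rvert,\lvert y\rvert\to1$ with $\xi\approx\eta$ is where the bound is attained and must be handled most carefully, since there the series barely converges.

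For part (b), since the expansion converges uniformly on compact subsets of $\mathbb B$, term-by-term application of $\nabla_1$ is legitimate. Each differentiation of $S_k(\lvert x\rvert)Y_k^{(j)}(\xi)$, whether in the radial or angular direction, produces an extra factor of size $\sim k$, which shifts the effective generating exponent by one (multiplying coefficients by $k$ acts like $w\partial_w$ on the generating series, raising the power of $(1-2tw+w^2)^{-1}$). Running the same estimates as in part (a) then improves the decay to $\lvert\nabla_1\mathcal R_\alpha(x,y)\rvert\lesssim[x,y]^{-(\alpha+n+1)}$, again uniformly, which completes the proof.
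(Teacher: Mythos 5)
The first thing to observe is that the paper does not prove this lemma at all: it is quoted from \cite[Theorem 1.2]{U}, so your attempt has to be measured against that reference, whose proof does indeed run through the expansion you set up. Your framework is correct as far as it goes: the radial ODE is right, its solution regular at the origin is $S_k(r)=r^k\,{}_2F_1\big(k,1-\tfrac n2;k+\tfrac n2;r^2\big)$, the functions $S_k(\lvert x\rvert)Y_k^{(j)}(\xi)$ are orthogonal in $\mathcal B^2_\alpha$, and the zonal (Gegenbauer) collapse of the inner sum is standard. Your opening observations are also sound: composition with $\varphi_a$ preserves $\mathcal H$-harmonicity, but the weight $w_a$ needed to turn it into an isometry of $L^2_\alpha$ destroys $\mathcal H$-harmonicity, so no M\"obius transformation rule for $\mathcal R_\alpha$ is available, and Cauchy--Schwarz gives only the diagonal-type bound.

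However, the decisive step of your argument is a declaration of intent rather than a proof, and the mechanism you propose for it fails as stated. You plan to show that $b_kS_k(\lvert x\rvert)S_k(\lvert y\rvert)$ is \emph{comparable} to the coefficients of $(1-2tw+w^2)^{-(\alpha+n)/2}$ and then to \emph{dominate} the zonal sum by that expression. Since $C_k^\lambda(\langle\xi,\eta\rangle)$ changes sign, comparability of coefficients permits no termwise domination: the only termwise bound is $\lvert C_k^\lambda(t)\rvert\leq C_k^\lambda(1)$, and inserting it yields a bound of the type $(1-\lvert x\rvert\lvert y\rvert)^{-(\alpha+n)}$, which by the lower bound in \eqref{xybig} is weaker than $[x,y]^{-(\alpha+n)}$ --- it discards exactly the off-diagonal decay the lemma asserts. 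To capture that decay one must exploit the oscillation of the zonal harmonics, which in practice means writing the coefficients as an \emph{exact} finite combination of explicit terms whose sums against $Z_k$ have closed forms (derivatives of the generating function), plus a remainder $O(k^{-M})$ summable against the dimension factor $k^{n-2}$, whose contribution is then merely bounded (harmless, since $[x,y]\leq 2$). A second obstruction you pass over: unlike the Euclidean case, where the radial factor is $(\lvert x\rvert\lvert y\rvert)^k$, the product $S_k(\lvert x\rvert)S_k(\lvert y\rvert)$ is \emph{not} a function of $w=\lvert x\rvert\lvert y\rvert$ alone (for instance $S_k(1)\sim k^{1-n/2}$ by Gauss's summation, so even the boundary values depend on $k$), so the generating-function matching cannot be carried out in a single variable $w$; handling this two-variable hypergeometric asymptotics uniformly up to the boundary is precisely the technical core of the proof in \cite{U}, and it is the part your proposal leaves out. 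The same criticism applies to part (b): term-by-term differentiation plus ``each differentiation produces a factor $\sim k$'' additionally requires a Bernstein-type inequality for spherical harmonics and control of $S_k'$ near $r=1$, neither of which is supplied.
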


On the diagonal $y=x$, the following two-sided estimate
holds (\cite[Lemma 6.1]{U}).
\begin{equation}\label{EstRtwosided}
\mathcal R_\alpha(x,x)\sim\frac{1}{(1-\lvert x\rvert^2)^{\alpha+n}}.
\end{equation}
The following estimate of the integrals of powers of the
reproducing kernels is part of \cite[Theorem 1.3]{U}.
If $\alpha,s>-1$, $0<p<\infty$ and $p(s+n)-(\alpha+n)>0$, then
\begin{equation}\label{IntRpower}
\int_\mathbb B\bigl\lvert\mathcal R_s(x,y)\bigr\rvert^{p}\,d\nu_\alpha(y)
\sim\dfrac{1}{(1-|x|^2)^{p(s+n)-(\alpha+n)}}.
\end{equation}

For $s>-1$, we define the projection operator $P_s$ by
\begin{equation*}
P_s f(x)=\int_{\mathbb B}f(y)\mathcal R_s(x,y)\,d\nu_s(y),
\end{equation*}
and the related operators $\tilde{P}_s$ and $Q_s$ by
\begin{equation*}
\tilde{P}_s f(x)
=\int_{\mathbb B}f(y)\,\lvert\mathcal R_s(x,y)\rvert\,d\nu_s(y),
\end{equation*}
and
\begin{equation}\label{DefineQs}
Q_s f(x)=\int_{\mathbb B}\frac{f(y)}{[x,y]^{s+n}}\,d\nu_s(y),
\end{equation}
for suitable $f$.

\begin{lemma}\label{Lprojection}
Let $1\leq p<\infty$ and $\alpha,s>-1$.
The following are equivalent:
\begin{enumerate}
\item[(a)] $P_s$ is bounded on $L^p_\alpha$,
\item[(b)] $\tilde{P}_s$ is bounded on $L^p_\alpha$,
\item[(c)] $Q_s$ is bounded on $L^p_\alpha$,
\item[(d)] $\alpha+1<p(s+1)$.
\end{enumerate}
In case (d) holds, then $P_sf=f$ for every $f\in\mathcal B^p_\alpha$.
\end{lemma}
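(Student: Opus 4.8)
The plan is to prove the cyclic chain of implications $(d)\Rightarrow(c)\Rightarrow(b)\Rightarrow(a)\Rightarrow(d)$, together with the reproducing identity at the end. The first three implications constitute the sufficiency of the condition and are comparatively soft; the closing implication $(a)\Rightarrow(d)$ is the necessity and is where the real difficulty lies, because the kernel $\mathcal R_s$ is signed and we only have the one-sided pointwise bound of Lemma \ref{LEstR}(a).

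For $(d)\Rightarrow(c)$ I would run Schur's test. Writing $d\nu_s(y)=(1-\lvert y\rvert^2)^{s-\alpha}\,d\nu_\alpha(y)$, the operator $Q_s$ has, relative to the measure $d\nu_\alpha$, the positive kernel $k(x,y)=(1-\lvert y\rvert^2)^{s-\alpha}[x,y]^{-(s+n)}$. With the test weight $h(x)=(1-\lvert x\rvert^2)^{-t}$ and the standard Forelli--Rudin estimate for $\int_{\mathbb B}(1-\lvert y\rvert^2)^{c}[x,y]^{-(s+n)}\,d\nu(y)$ (the same estimate that underlies \eqref{IntRpower}), the two Schur integrals reduce to $\int_{\mathbb B}k(x,y)h(y)^{p'}\,d\nu_\alpha(y)\sim h(x)^{p'}$ and $\int_{\mathbb B}k(x,y)h(x)^{p}\,d\nu_\alpha(x)\sim h(y)^{p}$, which hold provided $t>0$ and $\max\{0,\alpha-s\}<tp<\min\{\alpha+1,(s+1)(p-1)\}$. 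Such a $t$ exists precisely when $\alpha+1<p(s+1)$, i.e. under $(d)$, so $Q_s$ is bounded on $L^p_\alpha$. The borderline case $p=1$ is handled directly by Fubini: $Q_s$ is bounded on $L^1_\alpha$ iff $\sup_y\int_{\mathbb B}k(x,y)\,d\nu_\alpha(x)<\infty$, and the Forelli--Rudin estimate shows this supremum is finite exactly when $s>\alpha$, again $(d)$. Then $(c)\Rightarrow(b)$ is immediate from Lemma \ref{LEstR}(a), since $\tilde{P}_s\lvert f\rvert(x)\leq C\,Q_s\lvert f\rvert(x)$ pointwise, whence $\|\tilde{P}_s f\|_{L^p_\alpha}\leq C\|Q_s\|\,\|f\|_{L^p_\alpha}$; and $(b)\Rightarrow(a)$ is the trivial domination $\lvert P_s f\rvert\leq\tilde{P}_s\lvert f\rvert$.

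The main obstacle is $(a)\Rightarrow(d)$, which I would prove in contrapositive form: if $\alpha+1\geq p(s+1)$ then $P_s$ is unbounded on $L^p_\alpha$. The naive attempts fail here: testing $P_s$ on a kernel $f=\mathcal R_s(\cdot,a)$ (so that $P_s f=f$ by the reproducing property) or on the indicator of a small pseudo-hyperbolic ball only returns the harmless bound $1\lesssim\|P_s\|$, because these functions are adapted to a single scale, while the failure of the projection is a large-scale effect coming from the slow decay of $[x,y]^{-(s+n)}$; and duality does not help, since the adjoint of $P_s$ has the same signed kernel. My plan is instead to use that $P_s$ commutes with rotations (as $\mathcal R_s(Ux,Uy)=\mathcal R_s(x,y)$ and $d\nu_s$ is rotation invariant), so $P_s$ acts on each spherical harmonic component through a one-dimensional zonal kernel obtained by averaging $\mathcal R_s$ over the angular variables. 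On a fixed nonconstant component this zonal kernel is positive and admits a two-sided estimate coming from the analysis behind \eqref{EstRtwosided} and \eqref{IntRpower}; one then tests $P_s$ on a single such component with radial profile $(1-\lvert y\rvert^2)^{b}$ and lets $b\downarrow-(\alpha+1)/p$, producing an image whose boundary growth lies outside $L^p_\alpha$ exactly when $\alpha+1\geq p(s+1)$. Establishing the positivity and the two-sided control of the zonal kernel is the crux of the whole lemma.

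Finally, assuming $(d)$ (hence $P_s$ bounded, by the chain above), I would prove $P_sf=f$ for every $f\in\mathcal B^p_\alpha$ by approximation. The identity \eqref{reproduce} applied with $\alpha$ replaced by $s$ gives $P_sg=g$ for every bounded $\mathcal H$-harmonic $g$, since such $g$ lies in $\mathcal B^2_s$. It therefore suffices to approximate an arbitrary $f\in\mathcal B^p_\alpha$ in the $\|\cdot\|_{L^p_\alpha}$-norm by bounded $\mathcal H$-harmonic functions and then pass to the limit using the continuity of $P_s$ on $L^p_\alpha$; the required density is the only point that uses the fine structure of $\mathcal H(\mathbb B)$.
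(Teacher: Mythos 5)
Your treatment of the soft implications is correct and matches the paper's: $(c)\Rightarrow(b)$ via Lemma \ref{LEstR}(a), $(b)\Rightarrow(a)$ by trivial domination, and your Schur-test proof of $(d)\Rightarrow(c)$ (with the weight $(1-\lvert x\rvert^2)^{-t}$, the window $\max\{0,\alpha-s\}<tp<\min\{\alpha+1,(s+1)(p-1)\}$, and the Fubini argument for $p=1$) is exactly the ``well-known'' argument the paper alludes to; the exponent bookkeeping checks out against Lemma \ref{LIntonSandB}. The paper itself, however, does not prove the hard content at all: it cites \cite[Theorem 1.1]{U} for $(a)\Rightarrow(d)$ and notes that $(d)\Rightarrow(c)$ is contained in the proof given there.

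The genuine gaps in your proposal are precisely the two points you flag but do not resolve. First, for $(a)\Rightarrow(d)$ you propose decomposing $P_s$ into zonal pieces via rotation invariance and testing on a single spherical-harmonic component, but the ``positivity and two-sided control of the zonal kernel'' that you call the crux is left entirely unestablished; for the hyperbolic ball the radial parts of $\mathcal R_s$ are hypergeometric functions, and obtaining those two-sided estimates is exactly the technical content of \cite{U} --- without it your contrapositive argument does not get off the ground, and nothing in the present paper (Lemma \ref{LEstR}, \eqref{EstRtwosided}, \eqref{IntRpower}) substitutes for it, since those are one-sided or diagonal/integrated estimates. Second, for $P_sf=f$ your approximation scheme needs bounded $\mathcal H$-harmonic functions to be dense in $\mathcal B^p_\alpha$, and this is not a routine verification: the standard dilation trick $f_r(x)=f(rx)$, which gives density in the Euclidean harmonic and holomorphic settings, fails here because $\Delta_h$ is not dilation-invariant, so $f_r$ need not be $\mathcal H$-harmonic. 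So the ``only point that uses the fine structure of $\mathcal H(\mathbb B)$'' is in fact a substantive open step in your argument, again handled in the paper only by reference to \cite[Theorem 1.1]{U}. As it stands, your proposal is a correct reduction of the lemma to these two unproven claims, not a proof of it.
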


\begin{proof}
(c) $\Rightarrow$ (b) follows from part (a) of Lemma \ref{LEstR},
(b) $\Rightarrow$ (a) is clear,
(a) $\Rightarrow$ (d) is proved in \cite[Theorem 1.1]{U},
and (d) $\Rightarrow$ (c) is well-known and included in the proof of
\cite[Theorem 1.1]{U}.
\end{proof}

We denote by $\sigma$ the surface measure on $\mathbb S$
normalized so that $\sigma(\mathbb S)=1$.
For a proof of the next estimate see \cite[Proposition 2.2]{LS}.

\begin{lemma}\label{LIntonSandB}
Let $b>-1$ and $c\in\mathbb R$.
For $x\in\mathbb B$, define
\begin{equation*}
I_c(x):=\int_{\mathbb S}\frac{d\sigma(\zeta)}{\lvert x-\zeta\rvert^{n-1+c}}
\qquad\text{and}\qquad
J_{b,c}(x)
:=\int_{\mathbb B}\frac{(1-\lvert y\rvert^2)^b}{[x,y]^{n+b+c}}\,d\nu(y).
\end{equation*}
Both $I_c(x)$ and $J_{b,c}(x)$ have the same upper and lower estimates for
all $x\in\mathbb B$,
\begin{equation*}
I_c(x)\sim J_{b,c}(x)\sim
\begin{cases}
\dfrac{1}{(1-\lvert x\rvert^2)^{c}},&\text{if $c>0$};\\
1+\log\dfrac{1}{1-\lvert x\rvert^2},&\text{if $c=0$};\\
1,&\text{if $c<0$}.
\end{cases}
\end{equation*}
The implied constants depend only on $n,b,c$, and are independent of $x$.
\end{lemma}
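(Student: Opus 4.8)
\textbf{The plan is to} prove the two-sided estimate for $J_{b,c}(x)$ by reducing to the boundary integral $I_c(x)$, whose estimate is a classical Forelli--Rudin type result that I will establish first. The key observation is that when $x$ stays in a compact subset of $\mathbb B$ (say $\lvert x\rvert\le 1/2$), both integrals are bounded above and below by positive constants, since the integrands are continuous and strictly positive on the relevant compact domains; here we use that $[x,y]\ge 1-\lvert x\rvert\lvert y\rvert>0$ by \eqref{xybig}. So the content is entirely in the regime $\lvert x\rvert\to 1$, and I only need to track the dependence on $1-\lvert x\rvert^2$.

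\textbf{First I would estimate $I_c(x)$.} Fix $x\in\mathbb B$ with $\lvert x\rvert\ge 1/2$ and write $\delta=1-\lvert x\rvert$. Splitting the sphere according to the size of $\lvert x-\zeta\rvert$, I would use that $\lvert x-\zeta\rvert\sim \delta + \lvert x/\lvert x\rvert-\zeta\rvert$ for $\zeta\in\mathbb S$, together with the standard surface-measure estimate $\sigma\bigl(\{\zeta\in\mathbb S:\lvert \eta-\zeta\rvert<t\}\bigr)\sim t^{n-1}$ for a fixed pole $\eta\in\mathbb S$ and $0<t<2$. Decomposing $\mathbb S$ into dyadic annuli $\{2^{j}\delta\le \lvert \eta-\zeta\rvert<2^{j+1}\delta\}$ and summing a geometric-type series then gives
\begin{equation*}
I_c(x)\sim\sum_{j}\frac{(2^j\delta)^{n-1}}{(2^j\delta)^{n-1+c}}
=\sum_j (2^j\delta)^{-c},
\end{equation*}
where $j$ ranges so that $2^j\delta\lesssim 1$. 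When $c>0$ the sum is dominated by its smallest-scale term $\delta^{-c}$, yielding $I_c(x)\sim\delta^{-c}\sim(1-\lvert x\rvert^2)^{-c}$; when $c<0$ it is dominated by the largest-scale term, which is $\sim 1$; and when $c=0$ every term contributes $\sim 1$ and the number of terms is $\sim\log(1/\delta)\sim\log\bigl(1/(1-\lvert x\rvert^2)\bigr)$, giving the logarithmic estimate. Since $1-\lvert x\rvert^2=(1-\lvert x\rvert)(1+\lvert x\rvert)\sim\delta$ in this regime, this matches the claimed form.

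\textbf{Next I would pass from $I_c$ to $J_{b,c}$} by integrating $I$ in the radial direction. Writing $y=\rho\zeta$ with $\rho=\lvert y\rvert\in[0,1)$ and $\zeta\in\mathbb S$, the factor $[x,y]$ behaves like $[x,\rho\zeta]^2\sim \lvert x-\rho\zeta\rvert^2 + (1-\lvert x\rvert^2)(1-\rho^2)$ by \eqref{xysquare}. For each fixed $\rho$ close to $1$, the angular integral $\int_{\mathbb S}\lvert x-\rho\zeta\rvert^{-(n+b+c)}\,d\sigma(\zeta)$ is governed by the same dyadic-annulus argument as $I_c$, but now with the smallest angular scale cut off at $\sqrt{(1-\lvert x\rvert^2)(1-\rho^2)}$ rather than at $1-\lvert x\rvert^2$. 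Carrying out the angular integration and then the radial integration $\int_0^1(1-\rho^2)^b\,\rho^{n-1}\,(\cdots)\,d\rho$, the convergence at the endpoint $\rho=1$ is guaranteed precisely by the hypothesis $b>-1$, and the remaining radial integral produces the same trichotomy in $c$. \textbf{The step I expect to be the main obstacle} is the bookkeeping in this double (angular then radial) integration: one must keep the three regimes $c>0$, $c=0$, $c<0$ straight while the effective cutoff scale depends on $\rho$, and show that the radial integral reproduces $(1-\lvert x\rvert^2)^{-c}$, the logarithm, or a constant without accidentally losing or gaining a power of $1-\lvert x\rvert^2$; the cleanest way to organize this is to first prove $J_{b,c}(x)\sim\int_0^1 (1-\rho^2)^b\,I_{c+(n-1-(n-1))}(\cdots)\,d\rho$ type reduction and invoke the already-established estimate for the angular pieces uniformly in $\rho$.
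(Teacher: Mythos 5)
The paper does not actually prove this lemma; it quotes it from \cite[Proposition 2.2]{LS}, so your attempt can only be judged on its own merits. Your overall strategy is the standard Forelli--Rudin route and is sound: the reduction to $\lvert x\rvert\geq 1/2$ is fine, and your estimate of $I_c$ is correct, since for $\eta=x/\lvert x\rvert$ one has $\lvert x-\zeta\rvert^2=(1-\lvert x\rvert)^2+\lvert x\rvert\,\lvert\eta-\zeta\rvert^2$, so $\lvert x-\zeta\rvert\sim\delta+\lvert\eta-\zeta\rvert$, and the dyadic-cap summation gives the trichotomy with two-sided bounds.

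However, two concrete details in your passage from $I_c$ to $J_{b,c}$ are wrong as stated, and they sit exactly at the bookkeeping step you flag as the main obstacle. (i) The smallest angular scale at fixed radius $\rho$ is not $\sqrt{(1-\lvert x\rvert^2)(1-\rho^2)}$ but $1-\rho\lvert x\rvert$: by \eqref{xysquare}, $[x,\rho\zeta]^2=(\lvert x\rvert-\rho)^2+\lvert x\rvert\rho\,\lvert\eta-\zeta\rvert^2+(1-\lvert x\rvert^2)(1-\rho^2)$, and $(\lvert x\rvert-\rho)^2+(1-\lvert x\rvert^2)(1-\rho^2)=(1-\rho\lvert x\rvert)^2$, so the floor is $1-\rho\lvert x\rvert\sim(1-\lvert x\rvert)+(1-\rho)$. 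The geometric mean you propose can be much smaller (take $1-\rho\gg 1-\lvert x\rvert$); using it as the cutoff overestimates the angular integral and, after the radial integration, yields $(1-\lvert x\rvert^2)^{-(b+c+1)/2}$ rather than $(1-\lvert x\rvert^2)^{-c}$ when $0<c<b+1$ --- precisely the lost/gained power you worried about. (ii) Your closing reduction formula has the wrong index: since $[x,\rho\zeta]=\lvert\rho x-\zeta\rvert$ by \eqref{bracketxy} and $n+b+c=(n-1)+(b+c+1)$, the correct identity is
\begin{equation*}
J_{b,c}(x)=n\int_0^1 \rho^{n-1}(1-\rho^2)^b\, I_{b+c+1}(\rho x)\,d\rho,
\end{equation*}
so the angular piece is $I_{b+c+1}$, not $I_c$ as your ``$I_{c+(n-1-(n-1))}$'' suggests; with $I_c(\rho x)$ instead, the radial integral stays bounded for all $0<c<b+1$, contradicting the claimed growth $(1-\lvert x\rvert^2)^{-c}$. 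With these two corrections your plan does close: for $b+c+1>0$ one gets $J_{b,c}(x)\sim\int_0^1(1-\rho)^b(1-\rho\lvert x\rvert)^{-(b+c+1)}\,d\rho$, and splitting at $1-\rho=1-\lvert x\rvert$ produces the three cases in $c$; for $b+c+1\leq 0$ (which forces $c<0$ since $b>-1$) the angular factor is at worst logarithmic and $b>-1$ gives $J_{b,c}\sim 1$.
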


We record the following elementary facts about the sequence spaces
for future reference.

\begin{lemma}\label{Lseq}
(i) For $0<p<q<\infty$, $\|\{\lambda_m\}\|_{\ell^q}\leq\|\{\lambda_m\}\|_{\ell^p}$.

(ii) Let $1<p<\infty$ and $p'$ be the conjugate exponent of $p$, $1/p+1/p'=1$.
If $\sum_{m=1}^\infty\lvert \lambda_m\beta_m\rvert<\infty$ for every $\{\beta_m\}\in\ell^{p'}$,
then $\{\lambda_m\}\in\ell^p$.
\end{lemma}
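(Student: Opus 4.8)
The plan is to handle the two parts separately, since (i) is a one-line normalization while the only real content lies in (ii). For (i), I would first dispose of the trivial cases: if $\|\{\lambda_m\}\|_{\ell^p}=\infty$ there is nothing to prove, and if it equals $0$ then every $\lambda_m=0$. Otherwise, using that $\|c\{\lambda_m\}\|_{\ell^p}=\lvert c\rvert\,\|\{\lambda_m\}\|_{\ell^p}$ and likewise for the $\ell^q$ functional, I may rescale so that $\|\{\lambda_m\}\|_{\ell^p}=1$; then each $\lvert\lambda_m\rvert\le 1$, and since $q>p$ we have $\lvert\lambda_m\rvert^q\le\lvert\lambda_m\rvert^p$ for every $m$. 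Summing over $m$ gives $\|\{\lambda_m\}\|_{\ell^q}^q\le\|\{\lambda_m\}\|_{\ell^p}^p=1$, and hence $\|\{\lambda_m\}\|_{\ell^q}\le 1=\|\{\lambda_m\}\|_{\ell^p}$, which is the desired inequality.

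For (ii), the statement is the converse half of Hölder's inequality: the hypothesis says that the pairing $\{\beta_m\}\mapsto\sum_m\lambda_m\beta_m$ is everywhere defined (indeed absolutely convergent) on $\ell^{p'}$, and I want to conclude that $\{\lambda_m\}$ lies in the dual space $\ell^p=(\ell^{p'})^*$. My main line of attack is the uniform boundedness principle. For each $N$ define a linear functional $T_N$ on $\ell^{p'}$ by $T_N(\{\beta_m\})=\sum_{m=1}^N\lambda_m\beta_m$. Since $T_N$ involves only finitely many of the $\lambda_m$, it is bounded, and the extremal case of Hölder on finitely supported sequences shows $\|T_N\|=\bigl(\sum_{m=1}^N\lvert\lambda_m\rvert^p\bigr)^{1/p}$. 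The hypothesis guarantees that for each fixed $\{\beta_m\}\in\ell^{p'}$ the numbers $T_N(\{\beta_m\})$ stay bounded in $N$ (they in fact converge, because $\sum_m\lvert\lambda_m\beta_m\rvert<\infty$). As $1<p'<\infty$, $\ell^{p'}$ is a Banach space, so Banach--Steinhaus yields $\sup_N\|T_N\|<\infty$, that is, $\sup_N\sum_{m=1}^N\lvert\lambda_m\rvert^p<\infty$; letting $N\to\infty$ gives $\{\lambda_m\}\in\ell^p$.

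If a self-contained argument avoiding Banach--Steinhaus is preferred, I would instead prove (ii) by contraposition through a gliding-hump construction. Assuming $\sum_m\lvert\lambda_m\rvert^p=\infty$, partition the indices into consecutive finite blocks $B_1,B_2,\dots$ with $S_k:=\sum_{m\in B_k}\lvert\lambda_m\rvert^p\ge 1$ (possible since the partial sums diverge), and set $\beta_m=(kS_k)^{-1}\lvert\lambda_m\rvert^{p-1}\operatorname{sgn}\overline{\lambda_m}$ for $m\in B_k$. Using $(p-1)p'=p$ one checks $\sum_{m\in B_k}\lvert\beta_m\rvert^{p'}=(kS_k)^{-p'}S_k=k^{-p'}S_k^{1-p'}\le k^{-p'}$, so $\{\beta_m\}\in\ell^{p'}$, while $\sum_{m\in B_k}\lvert\lambda_m\beta_m\rvert=(kS_k)^{-1}S_k=k^{-1}$, whence $\sum_m\lvert\lambda_m\beta_m\rvert=\sum_k k^{-1}=\infty$, contradicting the hypothesis. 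The one point requiring care in either route is exactly the passage from the \emph{pointwise} information (the pairing is finite for each individual $\beta$) to the \emph{uniform} bound $\sup_N\sum_{m=1}^N\lvert\lambda_m\rvert^p<\infty$; the naive test sequence $\beta_m=\lvert\lambda_m\rvert^{p-1}\operatorname{sgn}\overline{\lambda_m}$ fails because it lands precisely on the boundary $\sum_m\lvert\beta_m\rvert^{p'}=\sum_m\lvert\lambda_m\rvert^p$, which is why the mass must be redistributed across blocks with the decaying weights $(kS_k)^{-1}$.
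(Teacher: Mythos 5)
Your proposal is correct in both parts, but note that the paper itself offers no proof to compare against: Lemma 2.10 is recorded as an ``elementary fact about the sequence spaces'' and stated without argument, so you are supplying a proof the author deliberately omitted. Part (i) is exactly the standard normalization argument (homogeneity of the $\ell^p$ quasi-norm holds for all $0<p<\infty$, so the rescaling is legitimate even when $p<1$). For part (ii), both of your routes are sound: the Banach--Steinhaus argument is the quickest, and your norm identity $\|T_N\|=\bigl(\sum_{m=1}^N\lvert\lambda_m\rvert^p\bigr)^{1/p}$ is justified by the extremal sequence $\beta_m=\lvert\lambda_m\rvert^{p-2}\overline{\lambda_m}$ on the truncation; the gliding-hump alternative is a correct self-contained substitute, and your block estimates check out, since $(p-1)p'=p$ gives $\sum_{m\in B_k}\lvert\beta_m\rvert^{p'}=k^{-p'}S_k^{1-p'}\le k^{-p'}$ (using $S_k\ge 1$ and $1-p'<0$) while $\sum_{m\in B_k}\lvert\lambda_m\beta_m\rvert=k^{-1}$ diverges upon summation over $k$. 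Your closing remark is also the right diagnosis of where the content lies: the naive extremal sequence fails precisely when $\lambda\notin\ell^p$, because then it does not belong to $\ell^{p'}$, which is why either a uniform-boundedness principle or the decaying weights $(kS_k)^{-1}$ is genuinely needed to pass from pointwise finiteness of the pairing to the uniform bound. One cosmetic caveat: define $\beta_m=0$ when $\lambda_m=0$ so that $\operatorname{sgn}\overline{\lambda_m}$ is everywhere meaningful.
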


\section{Atomic Decomposition}\label{SA}

The purpose of this section is to prove Theorem \ref{Tatomic}.
The main problem is to show that the operator $U$ defined in
\eqref{defineU} below is onto under the assumptions of the theorem and
we do this through a couple of propositions.

\begin{proposition}\label{Patom1}
For $\alpha>-1$ and $0<p<\infty$, choose $s$ so that \eqref{scond} holds
and let $0<r<1$.
If $\{a_m\}$ is $r$-separated, then the operator
$U\colon\ell^p\to\mathcal B^p_\alpha$ mapping $\lambda=\{\lambda_m\}$ to
\begin{equation}\label{defineU}
U\lambda(x)=\sum_{m=1}^\infty\lambda_m(1-\lvert a_m\rvert^2)^{s+n-(\alpha+n)/p}\,
\mathcal R_s(x,a_m)\qquad (x\in\mathbb B)
\end{equation}
is bounded.
The above series converges absolutely and uniformly on compact subsets of $\mathbb B$,
and also in $\|\cdot\|_{\mathcal B^p_\alpha}$.
\end{proposition}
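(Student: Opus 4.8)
Throughout set $t:=s+n-(\alpha+n)/p$, the exponent appearing in \eqref{defineU}; a short rearrangement of \eqref{scond} shows $t>0$ in both regimes. The plan is to control everything through the positive majorant
\[
V\lambda(x):=\sum_{m=1}^\infty |\lambda_m|\,(1-|a_m|^2)^{t}\,\frac{1}{[x,a_m]^{s+n}},
\]
which satisfies $|U\lambda(x)|\le C\,V\lambda(x)$ by Lemma~\ref{LEstR}(a). I would first record that $\sum_m|\lambda_m|(1-|a_m|^2)^t<\infty$: for $p>1$ this follows from Hölder's inequality together with Lemma~\ref{LSumam}(a), once one checks $tp'>n-1$, which is exactly \eqref{scond} after clearing denominators; for $0<p\le1$ it is immediate from $\ell^p\subset\ell^1$ and $(1-|a_m|^2)^t\le1$. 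On a compact $K\subset\mathbb B$ we have $[x,a_m]\ge 1-|x|\ge c_K>0$, so $V\lambda$, and hence the series \eqref{defineU}, converges absolutely and uniformly on $K$; being a locally uniform limit of $\mathcal H$-harmonic functions, $U\lambda\in\mathcal H(\mathbb B)$.

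The main obstacle is the norm estimate $\|U\lambda\|_{L^p_\alpha}\lesssim\|\lambda\|_{\ell^p}$ when $p\ge1$, where the discrete sum $V\lambda$ must be absorbed into the integral operator $Q_s$ of \eqref{DefineQs}. Because $\{a_m\}$ is $r$-separated, the balls $E_{r/2}(a_m)$ are pairwise disjoint, so I would introduce the single function
\[
g(y):=\sum_{m=1}^\infty |\lambda_m|\,(1-|a_m|^2)^{-(\alpha+n)/p}\,\mathbf 1_{E_{r/2}(a_m)}(y).
\]
For $y\in E_{r/2}(a_m)$ one has $1-|y|^2\sim1-|a_m|^2$ by Lemma~\ref{Lratiosquare} and $[x,y]\sim[x,a_m]$ by Lemma~\ref{Lratiobracket}, while $\nu(E_{r/2}(a_m))\sim(1-|a_m|^2)^n$ by \eqref{PseudoBall}; hence $Q_s(\mathbf 1_{E_{r/2}(a_m)})(x)\sim(1-|a_m|^2)^{s+n}[x,a_m]^{-(s+n)}$ with constants uniform in $m$. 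Summing over the disjoint supports and using the value of $t$ gives $V\lambda(x)\lesssim Q_s g(x)$, and Lemma~\ref{Lprojection} applies precisely because \eqref{scond} reads $\alpha+1<p(s+1)$ in this regime, so $Q_s$ is bounded on $L^p_\alpha$. A direct computation using disjointness and $1-|y|^2\sim1-|a_m|^2$ gives $\|g\|_{L^p_\alpha}^p\sim\sum_m|\lambda_m|^p$, and chaining the three estimates yields the bound.

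For $0<p<1$ the operator $Q_s$ is no longer bounded, but subadditivity makes the argument shorter. Using $(\sum b_m)^p\le\sum b_m^p$ for $b_m\ge0$ and integrating termwise,
\[
\int_{\mathbb B}|U\lambda|^p\,d\nu_\alpha
\le\sum_{m=1}^\infty|\lambda_m|^p(1-|a_m|^2)^{tp}\int_{\mathbb B}|\mathcal R_s(x,a_m)|^p\,d\nu_\alpha(x).
\]
Here \eqref{IntRpower} applies since \eqref{scond} gives $\alpha+n<p(s+n)$, i.e. $p(s+n)-(\alpha+n)>0$, and it sizes the inner integral as $(1-|a_m|^2)^{-(p(s+n)-(\alpha+n))}$; the exponent of $1-|a_m|^2$ then collapses to $tp-\bigl(p(s+n)-(\alpha+n)\bigr)=0$, leaving exactly $\sum_m|\lambda_m|^p$. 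Finally, convergence in $\|\cdot\|_{\mathcal B^p_\alpha}$ in both regimes follows by applying the norm bound just established to the shifted sequences $(0,\dots,0,\lambda_{N+1},\lambda_{N+2},\dots)$, whose images under $U$ are the tails of \eqref{defineU} and whose norms are controlled by $\sum_{m>N}|\lambda_m|^p\to0$. The boundedness of $U$ and the norm equivalence are byproducts of the same estimates.
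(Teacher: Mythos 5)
Your proposal is correct and follows essentially the same route as the paper: locally uniform convergence via $[x,a_m]\geq 1-\lvert x\rvert$ plus coefficient summability (H\"older with Lemma \ref{LSumam} for $p>1$, $\ell^p\subset\ell^1$ for $p\leq 1$), the norm bound for $p\geq 1$ by majorizing with $Q_s g$ for the same step function $g$ supported on the disjoint balls $E_{r/2}(a_m)$ and invoking Lemma \ref{Lprojection}, and for $p<1$ by subadditivity together with \eqref{IntRpower}. The only deviations are cosmetic (naming the majorant $V\lambda$, placing $p=1$ in the projection case rather than the subadditivity case, and spelling out the tail argument for norm convergence), all of which are valid.
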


\begin{proof}
Throughout the proof we suppress the constants that depend on the
fixed parameters $n,\alpha,p,s$ and $r$.

We begin with the case $0<p\leq1$.
We first show that for $\lambda\in\ell^p$, the series in \eqref{defineU}
converges absolutely and uniformly on compact subsets of $\mathbb B$ which
will imply that $U\lambda$ is $\mathcal H$-harmonic on $\mathbb B$
since each $\mathcal R_s(\cdot,a_m)$ is $\mathcal H$-harmonic.
By Lemma \ref{LEstR} (a), if $\lvert x\rvert\leq R<1$, then
$\lvert\mathcal R_s(x,a_m)\rvert\lesssim 1$ since $[x,a_m]\geq 1-\lvert x\rvert$
by \eqref{xybig} and using also Lemma \ref{Lseq} (i) we deduce
\begin{multline*}
\sum_{m=1}^\infty\lvert\lambda_m\rvert(1-\lvert a_m\rvert^2)^{s+n-(\alpha+n)/p}\,
\lvert\mathcal R_s(x,a_m)\rvert
\lesssim\sum_{m=1}^\infty\lvert\lambda_m\rvert(1-\lvert a_m\rvert^2)^{s+n-(\alpha+n)/p}\\
\leq\Bigl(\sum_{m=1}^\infty
\lvert\lambda_m\rvert^p(1-\lvert a_m\rvert^2)^{p(s+n)-(\alpha+n)}\Bigr)^{1/p}
\leq\Bigl(\sum_{m=1}^\infty\lvert\lambda_m\rvert^p\Bigr)^{1/p},
\end{multline*}
where in the last inequality we use $p(s+n)-(\alpha+n)>0$ by \eqref{scond}.

To show $\|U\lambda\|_{B^p_\alpha}\lesssim\|\lambda\|_{\ell^p}$, we again
use Lemma \ref{Lseq} (i) and obtain
\begin{align*}
\|U\lambda\|_{\mathcal B^p_\alpha}^p&=\int_\mathbb B\biggl\lvert
\sum_{m=1}^\infty\lambda_m(1-\lvert a_m\rvert^2)^{s+n-(\alpha+n)/p}\,
\mathcal R_s(x,a_m)\biggr\rvert^p\,d\nu_\alpha(x)\\
&\leq\sum_{m=1}^\infty\lvert\lambda_m\rvert^p(1-\lvert a_m\rvert^2)^{p(s+n)-(\alpha+n)}
\int_\mathbb B\lvert\mathcal R_s(x,a_m)\rvert^p\,d\nu_\alpha(x)
\lesssim\sum_{m=1}^\infty\lvert\lambda_m\rvert^p,
\end{align*}
where the last inequality follows from \eqref{IntRpower} by \eqref{scond}.
This also shows that the series in \eqref{defineU} converges
in the norm $\|\cdot\|_{\mathcal B^p_\alpha}$.

We now consider the case $1<p<\infty$.
Let $p'$ be the conjugate exponent of $p$.
The series in \eqref{defineU} converges absolutely and uniformly
on compact subsets of $\mathbb B$ because we again have
$\lvert\mathcal R_s(x,a_m)\rvert\lesssim 1$ and by H\"older's inequality,
\begin{equation*}
\sum_{m=1}^\infty\lvert\lambda_m\rvert(1-\lvert a_m\rvert^2)^{s+n-(\alpha+n)/p}
\leq \|\{\lambda_m\}\|_{\ell^p}
\Big(\sum_{m=1}^\infty(1-\lvert a_m\rvert^2)^{p'(s+n-(\alpha+n)/p)}\Bigr)^{1/p'}.
\end{equation*}
The last sum is finite by Lemma \ref{LSumam} since the inequality
$p'(s+n-(\alpha+n)/p)>n-1$ is equivalent to \eqref{scond}.
Thus $U\lambda$ is $\mathcal H$-harmonic on $\mathbb B$.

To show $\|U\lambda\|_{B^p_\alpha}\lesssim\|\lambda\|_{\ell^p}$,
following \cite{CR}, \cite{CL} and \cite{T2}, we use the projection
theorem.
Let
\begin{equation*}
g(x):=\sum_{m=1}^\infty\lvert\lambda_m\rvert
(1-\lvert a_m\rvert^2)^{-(\alpha+n)/p}\,
\chi_{E_{r/2}(a_m)}(x)
\qquad (x\in\mathbb B),
\end{equation*}
where $\chi_{E_{r/2}(a_m)}$ is the characteristic function of the
set $E_{r/2}(a_m)$.
We have $\|g\|_{L^p_\alpha}\sim\|\lambda\|_{\ell^p}$ since
by \eqref{Er2sim},
\begin{equation*}
\|g\|_{L^p_\alpha}^p
=\sum_{m=1}^\infty\lvert\lambda_m\rvert^p
(1-\lvert a_m\rvert^2)^{-(\alpha+n)}\nu_\alpha(E_{r/2}(a_m))
\sim\sum_{m=1}^\infty\lvert\lambda_m\rvert^p.
\end{equation*}
Next, with $Q_s$ as in \eqref{DefineQs},
\begin{align*}
Q_s g(x)=\int_{\mathbb B}\frac{g(y)\,d\nu_s(y)}{[x,y]^{s+n}}
&=\sum_{m=1}^\infty\lvert\lambda_m\rvert(1-\lvert a_m\rvert^2)^{-(\alpha+n)/p}
\int_{E_{r/2}(a_m)}\frac{(1-\vert y\rvert^2)^s}{[x,y]^{s+n}}\,d\nu(y)\\
&\sim\sum_{m=1}^\infty
\frac{\lvert\lambda_m\rvert(1-\lvert a_m\rvert^2)^{s+n-(\alpha+n)/p}}{[x,a_m]^{s+n}},
\end{align*}
where in the last line we use the fact that $[x,y]\sim [x,a_m]$
for $y\in E_{r/2}(a_m)$ by Lemma \ref{Lratiobracket},
and then \eqref{Er2sim}.
This shows that $\lvert U\lambda(x)\rvert\lesssim Q_sg(x)$
by Lemma \ref{LEstR} (a).
Since $Q_s$ is bounded by Lemma \ref{Lprojection} and \eqref{scond},
we conclude
\begin{equation*}
\|U\lambda\|_{\mathcal B^p_\alpha}\lesssim\|Q_s g\|_{L^p_\alpha}
\lesssim\|g\|_{L^p_\alpha}\sim\|\lambda\|_{\ell^p}.\qedhere
\end{equation*}
\end{proof}

To verify that the above operator $U\colon\ell^p\to\mathcal B^p_\alpha$ is
onto under the additional assumption that $\{a_m\}$ is an $r$-lattice
with $r$ small enough, we need to consider a second operator.
We first recall the following sub-mean value inequality for
$\mathcal H$-harmonic functions.
For a proof, see \cite[Section 4.7]{St1}.

\begin{lemma}\label{Lsubmean}
Let $f\in\mathcal H(\mathbb B)$ and $0<p<\infty$.
Then for all $a\in\mathbb B$ and all $0<\delta<1$,
\begin{equation*}
\lvert f(a)\rvert^p\leq\frac{C}{\delta^n}\int_{E_\delta(a)}\lvert f(y)\rvert^p\,d\tau(y),
\end{equation*}
where $C=1$ if $p\geq 1$ and $C=2^{n/p}$
if $0<p<1$.
\end{lemma}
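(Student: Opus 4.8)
The plan is to reduce to the center $a=0$ by invariance, extract a mean value property for $\mathcal H$-harmonic functions, dispatch $p\geq1$ with Jensen's inequality, and upgrade to $0<p<1$ by a bootstrapping argument. First I would use that $\Delta_h$ is invariant under $\mathcal M(\mathbb B)$ (immediate from the defining relation $\Delta_h h(b)=\Delta(h\circ\varphi_b)(0)$ together with the group structure of $\mathcal M(\mathbb B)$): then $g:=f\circ\varphi_a$ is again $\mathcal H$-harmonic and $g(0)=f(\varphi_a(0))=f(a)$. Since $E_\delta(a)=\varphi_a(\mathbb B_\delta)$ and $\tau$ is the invariant measure, the change of variables $y=\varphi_a(x)$ yields $\int_{E_\delta(a)}\lvert f\rvert^p\,d\tau=\int_{\mathbb B_\delta}\lvert g\rvert^p\,d\tau$. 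Thus it suffices to prove $\lvert g(0)\rvert^p\leq(C/\delta^n)\int_{\mathbb B_\delta}\lvert g\rvert^p\,d\tau$ for $\mathcal H$-harmonic $g$.

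The structural input I would establish next is that $\mathcal H$-harmonic functions have constant spherical means about the origin. Writing $M(r)=\int_{\mathbb S}g(r\zeta)\,d\sigma(\zeta)$ and integrating the identity $\Delta_h g=(1-\lvert x\rvert^2)^2\Delta g+2(n-2)(1-\lvert x\rvert^2)\langle x,\nabla g\rangle=0$ over the sphere $\{\lvert x\rvert=r\}$, the classical relations $\int_{\mathbb S}\Delta g(r\zeta)\,d\sigma=M''(r)+\frac{n-1}{r}M'(r)$ and $\int_{\mathbb S}\langle\zeta,\nabla g(r\zeta)\rangle\,d\sigma=M'(r)$ convert $\Delta_h g=0$ into a first-order linear ODE for $N:=M'$, namely $N'+\big(\frac{n-1}{r}+\frac{2(n-2)r}{1-r^2}\big)N=0$. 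Its nonzero solutions behave like $r^{-(n-1)}$ near the origin, so smoothness of $M$ forces $N\equiv0$ and hence $M(r)\equiv g(0)$ for all $0<r<1$; integrating in polar coordinates this also gives the Euclidean volume mean value $g(0)=\delta^{-n}\int_{\mathbb B_\delta}g\,d\nu$.

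For $p\geq1$ I would apply the triangle inequality and then Jensen's inequality to the probability measure $\sigma$, obtaining $\lvert g(0)\rvert^p=\big\lvert\int_{\mathbb S}g(r\zeta)\,d\sigma\big\rvert^p\leq\int_{\mathbb S}\lvert g(r\zeta)\rvert^p\,d\sigma$ for every $0<r<\delta$. Integrating this against $nr^{n-1}\,dr$ over $0<r<\delta$, so that $d\nu=nr^{n-1}\,dr\,d\sigma$ and $\nu(\mathbb B_\delta)=\delta^n$, gives $\lvert g(0)\rvert^p\delta^n\leq\int_{\mathbb B_\delta}\lvert g\rvert^p\,d\nu\leq\int_{\mathbb B_\delta}\lvert g\rvert^p\,d\tau$, where the last step uses $(1-\lvert x\rvert^2)^{-n}\geq1$. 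Undoing the reduction produces the asserted inequality with $C=1$.

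The main obstacle is the range $0<p<1$, where the concavity of $t\mapsto t^p$ reverses Jensen and blocks the direct route. Here I would first record that the $p=1$ case, run at an arbitrary center and radius, yields the $L^1$ sub-mean value $\lvert f(b)\rvert\leq\eta^{-n}\int_{E_\eta(b)}\lvert f\rvert\,d\tau$, and then upgrade it by a standard bootstrapping. The strong triangle inequality (Lemma \ref{LstrongTri}) gives $E_{\delta/2}(x_0)\subset E_\delta(a)$ whenever $\rho(a,x_0)\leq\delta/2$; factoring $\lvert f\rvert=\lvert f\rvert^{1-p}\lvert f\rvert^p$ and estimating $\lvert f\rvert^{1-p}$ by the supremum of $\lvert f\rvert$ over a slightly larger pseudo-hyperbolic ball, one bounds $\sup_{E_{s_k}(a)}\lvert f\rvert$ in terms of $\sup_{E_{s_{k+1}}(a)}\lvert f\rvert$ times the $L^p$ integral over $E_\delta(a)$, where $s_k\uparrow\delta$. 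Since the exponent $1-p$ lies in $(0,1)$, iterating this recursion converges and delivers $\lvert f(a)\rvert^p\leq(C/\delta^n)\int_{E_\delta(a)}\lvert f\rvert^p\,d\tau$; the doubling of the radius in each comparison is what accounts for the factor $2^n$, and hence for $2^{n/p}$ after taking $p$-th roots. Tracking this constant down to exactly $2^{n/p}$ is the delicate bookkeeping that I expect to be the hardest part.
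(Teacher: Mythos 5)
The paper itself offers no proof of this lemma---it simply cites Section 4.7 of Stoll's book---so your argument is necessarily an independent reconstruction, and most of it is sound. The reduction to $a=0$ via the M\"obius invariance of $\Delta_h$ and of $\tau$ (using $E_\delta(a)=\varphi_a(\mathbb B_\delta)$ and \eqref{Jacobian}), the ODE derivation of the spherical mean value property at the origin (the nonzero solutions $c\,r^{-(n-1)}(1-r^2)^{n-2}$ of your equation are unbounded at $0$, forcing $M'\equiv 0$), and the Jensen step for $p\geq 1$ combined with $d\nu\leq d\tau$ are all correct, and they deliver exactly the claimed constant $C=1$ in that range. This matches the standard (Stoll-type) proof of the easy half.

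The genuine gap is the case $0<p<1$: the lemma asserts the \emph{specific} constant $2^{n/p}$, and the infinite bootstrap you sketch does not produce it as written. Each step of your recursion costs a factor $\eta_k^{-n}$, where $\eta_k$ is the pseudo-hyperbolic gap between $E_{s_k}(a)$ and $E_{s_{k+1}}(a)$, and what survives the iteration is the product $\prod_k\eta_k^{-n(1-p)^k}$, whose size depends entirely on how the radii $s_k$ are chosen; your closing claim that ``the doubling of the radius accounts for the factor $2^n$'' is not a consequence of the recursion you set up. The argument that yields $2^{n/p}$ exactly, in one step rather than by iteration, is the Hardy--Littlewood maximization trick: after reducing to $a=0$, set $\eta(x)=(\delta-\lvert x\rvert)/(1-\delta\lvert x\rvert)$, which by Lemma \ref{LstrongTri} is the pseudo-hyperbolic distance from $x$ to the sphere $\lvert y\rvert=\delta$ and satisfies $E_{\eta(x)}(x)\subset\mathbb B_\delta$; let $h(x)=\eta(x)^{n/p}\lvert f(x)\rvert$ attain its maximum $M$ on the compact set $\overline{\mathbb B_\delta}$ at $x_1$, with $\eta_1=\eta(x_1)$. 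The triangle inequality for $\rho$ gives $\eta\geq\eta_1/2$ on $E_{\eta_1/2}(x_1)$, hence $\lvert f\rvert\leq M(\eta_1/2)^{-n/p}$ there; inserting $\lvert f\rvert=\lvert f\rvert^{1-p}\lvert f\rvert^p$ into your $L^1$ inequality at $x_1$ with radius $\eta_1/2$ gives $M\leq \eta_1^{n/p}(\eta_1/2)^{-n/p}M^{1-p}\int_{\mathbb B_\delta}\lvert f\rvert^p\,d\tau=2^{n/p}M^{1-p}\int_{\mathbb B_\delta}\lvert f\rvert^p\,d\tau$, so $M^p\leq 2^{n/p}\int_{\mathbb B_\delta}\lvert f\rvert^p\,d\tau$, and evaluating $h$ at $0$ yields $\lvert f(0)\rvert^p\leq M^p/\delta^n$, which is the assertion. (Your iteration can in fact be salvaged: taking $s_{k+1}-s_k\propto(1-p)^k$ makes the accumulated constant equal to $e^{nH(p)/p}$ with $H(p)=-p\log p-(1-p)\log(1-p)\leq\log 2$, hence $\leq 2^{n/p}$; but this optimization, together with the observation that $\sup_{\overline{E_\delta(a)}}\lvert f\rvert<\infty$ by compactness so the tail term in the iteration disappears, is precisely the bookkeeping your sketch leaves open.)
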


\begin{proposition}\label{Pfaminlp}
Let $\alpha>-1$, $0<p<\infty$, $0<r<1$ and $\{a_m\}$ be $r$-separated.
The operator $T\colon\mathcal B^p_\alpha\to\ell^p$ defined by
\begin{equation}\label{defineT}
Tf=\big\{f(a_m)(1-\lvert a_m\rvert^2)^{(\alpha+n)/p}\big\}
\end{equation}
is bounded.
\end{proposition}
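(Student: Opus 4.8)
The plan is to bound $\|Tf\|_{\ell^p}^p=\sum_{m=1}^\infty\lvert f(a_m)\rvert^p(1-\lvert a_m\rvert^2)^{\alpha+n}$ by $\|f\|_{\mathcal B^p_\alpha}^p$. The engine is the sub-mean value inequality. First I would apply Lemma \ref{Lsubmean} at each point $a_m$ with $\delta=r/2$ to get
\begin{equation*}
\lvert f(a_m)\rvert^p\lesssim\int_{E_{r/2}(a_m)}\lvert f(y)\rvert^p\,d\tau(y)
=\int_{E_{r/2}(a_m)}\lvert f(y)\rvert^p(1-\lvert y\rvert^2)^{-n}\,d\nu(y),
\end{equation*}
where I used $d\tau(y)=(1-\lvert y\rvert^2)^{-n}\,d\nu(y)$ and the implied constant depends only on $n,p$ (and the fixed $\delta=r/2$), hence is uniform in $m$.

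Next I would localize the weight. For $y\in E_{r/2}(a_m)$ we have $(1-\lvert y\rvert^2)\sim(1-\lvert a_m\rvert^2)$ by Lemma \ref{Lratiosquare}, with constants depending only on $r$ and so uniform in $m$. Multiplying the previous display by $(1-\lvert a_m\rvert^2)^{\alpha+n}$ and using $(1-\lvert a_m\rvert^2)^{\alpha+n}(1-\lvert y\rvert^2)^{-n}\sim(1-\lvert y\rvert^2)^{\alpha}$ inside the integral converts the right-hand side into an integral against $d\nu_\alpha$:
\begin{equation*}
\lvert f(a_m)\rvert^p(1-\lvert a_m\rvert^2)^{\alpha+n}
\lesssim\int_{E_{r/2}(a_m)}\lvert f(y)\rvert^p\,d\nu_\alpha(y).
\end{equation*}

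Finally I would sum over $m$. Because $\{a_m\}$ is $r$-separated, the half-radius balls $E_{r/2}(a_m)$ are pairwise disjoint: if $x$ lay in two of them, the strong triangle inequality of Lemma \ref{LstrongTri} would force $\rho(a_k,a_m)\le\frac{r}{1+r^2/4}<r$, contradicting separation. Disjointness then gives
\begin{equation*}
\sum_{m=1}^\infty\lvert f(a_m)\rvert^p(1-\lvert a_m\rvert^2)^{\alpha+n}
\lesssim\sum_{m=1}^\infty\int_{E_{r/2}(a_m)}\lvert f(y)\rvert^p\,d\nu_\alpha(y)
\le\int_{\mathbb B}\lvert f(y)\rvert^p\,d\nu_\alpha(y)=\|f\|_{\mathcal B^p_\alpha}^p,
\end{equation*}
which is exactly the boundedness of $T$. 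There is no serious obstacle here; the only point requiring care is that every implied constant be uniform in $m$, which holds because the sub-mean value constant depends only on $n,p$ and the weight comparison depends only on $r$. The single use of $r$-separation is to guarantee the disjointness that collapses the sum of local integrals into the global one.
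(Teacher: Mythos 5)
Your proof is correct and follows essentially the same route as the paper: the sub-mean value inequality (Lemma \ref{Lsubmean}) with $\delta=r/2$, the weight comparison $(1-\lvert y\rvert^2)\sim(1-\lvert a_m\rvert^2)$ from Lemma \ref{Lratiosquare}, and disjointness of the balls $E_{r/2}(a_m)$ to collapse the sum. The only difference is that you spell out, via Lemma \ref{LstrongTri}, why the half-radius balls are disjoint, a point the paper leaves implicit.
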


\begin{proof}
Applying Lemma \ref{Lsubmean} with $\delta=r/2$ and noting that
$(1-\lvert y\rvert^2)\sim (1-\lvert a_m\rvert^2)$ for
$y\in E_{r/2}(a_m)$ by Lemma \eqref{Lratiosquare}, we obtain
\begin{equation*}
\lvert f(a_m)\rvert^p(1-\lvert a_m\rvert^2)^{\alpha+n}
\lesssim\int_{E_{r/2}(a_m)}\lvert f(y)\rvert^p\,d\nu_\alpha(y).
\end{equation*}
Since $E_{r/2}(a_m)$ are disjoint, we deduce
\begin{equation*}
\|Tf\|^p_{\ell^p}
=\sum_{m=1}^\infty\lvert f(a_m)\rvert^p(1-\lvert a_m\rvert^2)^{\alpha+n}
\lesssim\sum_{m=1}^\infty\int_{E_{r/2}(a_m)}\lvert f(y)\rvert^p\,d\nu_\alpha(y)
\leq \|f\|_{\mathcal B^p_\alpha}^p.\qedhere
\end{equation*}
\end{proof}

We need a slightly modified version of the above operator $T$.

\begin{proposition}\label{Patom2}
For $\alpha>-1$ and $0<p<\infty$, choose $s$ so that \eqref{scond} holds
and let $0<r<1$.
If $\{a_m\}$ is an $r$-lattice and $\{E_m\}$ is the associated sequence
as given in Lemma \ref{LEm}, then the operator
$\hat{T}\colon\mathcal B^p_\alpha\to\ell^p$ defined by
\begin{equation}\label{defineThat}
\hat{T}f
=\bigl\{f(a_m)(1-\lvert a_m\rvert^2)^{(\alpha+n)/p-(s+n)}\,\nu_s(E_m)\bigr\}.
\end{equation}
is bounded.
\end{proposition}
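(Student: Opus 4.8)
The plan is to reduce $\hat T$ to the operator $T$ of Proposition \ref{Pfaminlp} by estimating the weight $\nu_s(E_m)$. First I would record the two-sided bound
$$
\nu_s(E_m)=\int_{E_m}(1-\lvert y\rvert^2)^s\,d\nu(y)\sim(1-\lvert a_m\rvert^2)^{s+n},
$$
which is exactly \eqref{Emsim} with $\gamma=s+n$. Concretely, this follows from the containment $E_{r/2}(a_m)\subset E_m\subset E_r(a_m)$ in \eqref{Emsubset}, together with the comparison $(1-\lvert y\rvert^2)\sim(1-\lvert a_m\rvert^2)$ for $y\in E_r(a_m)$ from Lemma \ref{Lratiosquare} and $\nu(E_m)\sim(1-\lvert a_m\rvert^2)^n$ from \eqref{PseudoBall}. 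I would stress that this estimate holds uniformly in $m$ with no summability hypothesis, since the condition $\gamma>n-1$ in Lemma \ref{LSumam} is invoked only when one sums over $m$.

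Next I would substitute this into the definition \eqref{defineThat}. Raising to the $p$-th power, the factor $\nu_s(E_m)^p\sim(1-\lvert a_m\rvert^2)^{p(s+n)}$ precisely cancels the negative power produced by the exponent $(\alpha+n)/p-(s+n)$, leaving
$$
\|\hat T f\|_{\ell^p}^p
=\sum_{m=1}^\infty\lvert f(a_m)\rvert^p(1-\lvert a_m\rvert^2)^{(\alpha+n)-p(s+n)}\,\nu_s(E_m)^p
\sim\sum_{m=1}^\infty\lvert f(a_m)\rvert^p(1-\lvert a_m\rvert^2)^{\alpha+n}
=\|Tf\|_{\ell^p}^p.
$$
Proposition \ref{Pfaminlp} then gives $\|Tf\|_{\ell^p}^p\lesssim\|f\|_{\mathcal B^p_\alpha}^p$, and the boundedness of $\hat T$ follows at once.

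I do not expect a genuine obstacle here: the entire content beyond Proposition \ref{Pfaminlp} is the volume comparison for $\nu_s(E_m)$, and that is already furnished by \eqref{Emsim}. For mere boundedness only the upper estimate $\nu_s(E_m)\lesssim(1-\lvert a_m\rvert^2)^{s+n}$ is needed; I would nonetheless carry the full two-sided comparison $\nu_s(E_m)\sim(1-\lvert a_m\rvert^2)^{s+n}$, since the role of $\hat T$ in the section is evidently to serve as an approximate right inverse to $U$, for which the sharp weight will matter.
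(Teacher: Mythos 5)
Your proof is correct and follows exactly the paper's own route: the paper likewise deduces Proposition \ref{Patom2} from Proposition \ref{Pfaminlp} via the comparison $\nu_s(E_m)\sim(1-\lvert a_m\rvert^2)^{s+n}$ of \eqref{Emsim}. Your write-up merely spells out the cancellation of exponents and the provenance of \eqref{Emsim}, which the paper leaves implicit.
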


\begin{proof}
This follows from Proposition \ref{Pfaminlp} since by \eqref{Emsim},
$\nu_s(E_m)\sim (1-\lvert a_m\rvert^2)^{s+n}$.
\end{proof}

\begin{proposition}\label{Patom3}
For $\alpha>-1$ and $0<p<\infty$, choose $s$ so that \eqref{scond} holds.
There exists a constant $C>0$ depending only on $n,\alpha,p,s$ such that
if $\{a_m\}$ is an $r$-lattice with $r<1/8$, then
$\|I-U\hat{T}\|_{\mathcal B^p_\alpha\to\mathcal B^p_\alpha}\leq Cr$.
\end{proposition}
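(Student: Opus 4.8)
The plan is to compute $U\hat T$ explicitly, recognize $I-U\hat T$ as an averaging--error operator, and control it by the oscillation of the integrand of the reproducing formula across the cells $E_m$. Substituting \eqref{defineThat} into \eqref{defineU}, the two powers of $1-\lvert a_m\rvert^2$ cancel and
\[
U\hat T f(x)=\sum_{m=1}^\infty f(a_m)\,\nu_s(E_m)\,\mathcal R_s(x,a_m).
\]
Since \eqref{scond} holds we have $P_sf=f$ (Lemma \ref{Lprojection} for $p\ge1$, and the analogous reproduction for $0<p<1$), so writing $\mathbb B$ as the disjoint union of the $E_m$ gives $f(x)=\sum_m\int_{E_m}f(y)\mathcal R_s(x,y)\,d\nu_s(y)$. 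Subtracting,
\[
(I-U\hat T)f(x)=\sum_{m=1}^\infty\int_{E_m}\bigl[f(y)\mathcal R_s(x,y)-f(a_m)\mathcal R_s(x,a_m)\bigr]\,d\nu_s(y),
\]
and the whole point is that the bracket is an oscillation of $y\mapsto f(y)\mathcal R_s(x,y)$ across a cell of pseudo-hyperbolic radius $\le r$, hence $O(r)$.

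To quantify this I would integrate the hyperbolic gradient along the hyperbolic geodesic from $a_m$ to $y\in E_m\subset E_r(a_m)$; as $\rho(a_m,y)<r$ its length is $\lesssim r$, so the bracket is bounded by $r\sup_{E_r(a_m)}\lvert\nabla^h(f\,\mathcal R_s(x,\cdot))\rvert$. The product rule for $\nabla^h=(1-\lvert\cdot\rvert^2)\nabla$, Lemma \ref{LEstR} together with $\lvert\nabla^h_2\mathcal R_s(x,\xi)\rvert\lesssim(1-\lvert\xi\rvert^2)[x,\xi]^{-(s+n+1)}$, and the local comparabilities $1-\lvert\xi\rvert^2\sim 1-\lvert a_m\rvert^2$, $[x,\xi]\sim[x,a_m]$ (Lemmas \ref{Lratiosquare}, \ref{Lratiobracket}, uniform for $\rho<r<1/8$) and $1-\lvert a_m\rvert^2\lesssim[x,a_m]$ collapse this to $\frac{r}{[x,a_m]^{s+n}}\sup_{E_r(a_m)}(\lvert f\rvert+\lvert\nabla^h f\rvert)$. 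Crucially, instead of now inserting $\nu_s(E_m)\sim(1-\lvert a_m\rvert^2)^{s+n}$ and summing (which leaks powers of $r$ and fails for $p<1$), I would dominate the local supremum by the maximal function $\tilde G(y):=\sup_{E_{2r}(y)}(\lvert f\rvert+\lvert\nabla^h f\rvert)$, legitimate since $E_r(a_m)\subset E_{2r}(y)$ for $y\in E_m$, and replace $[x,a_m]$ by $[x,y]$. Because the $E_m$ tile $\mathbb B$ the cell sum telescopes back into one integral,
\[
\lvert(I-U\hat T)f(x)\rvert\lesssim r\int_{\mathbb B}\frac{\tilde G(y)}{[x,y]^{s+n}}\,d\nu_s(y)=r\,Q_s\tilde G(x),
\]
with $Q_s$ as in \eqref{DefineQs} and no residual powers of $r$.

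It then remains to prove $\lVert Q_s\tilde G\rVert_{L^p_\alpha}\lesssim\lVert\tilde G\rVert_{L^p_\alpha}\lesssim\lVert f\rVert_{\mathcal B^p_\alpha}$ with constants independent of $r$. The first inequality is exactly the boundedness of $Q_s$ on $L^p_\alpha$ under \eqref{scond} (Lemma \ref{Lprojection} for $p\ge1$; the corresponding $L^p_\alpha$-bound for $0<p<1$ under $\alpha+n<p(s+n)$). For the second I would estimate $\tilde G(y)^p$ by local averages: Lemma \ref{Lsubmean} controls $\sup_{E_{2r}(y)}\lvert f\rvert^p$, while $\sup_{E_{2r}(y)}\lvert\nabla^h f\rvert^p$ is handled through $\nabla^hf(\xi)=-\nabla(f\circ\varphi_\xi)(0)$ and an interior estimate for the uniformly elliptic $\Delta_h$ near the origin (giving $\lvert\nabla^h f(\xi)\rvert^p\lesssim\int_{E_{1/2}(\xi)}\lvert f\rvert^p\,d\tau$). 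A Tonelli argument using invariance of $d\tau$ then gives $\lVert\tilde G\rVert_{L^p_\alpha}\lesssim\lVert f\rVert_{L^p_\alpha}$, the normalizing volumes cancelling so that no power of $r$ survives. Combining the three displays yields $\lVert(I-U\hat T)f\rVert_{\mathcal B^p_\alpha}\le Cr\lVert f\rVert_{\mathcal B^p_\alpha}$.

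The main obstacle is the bookkeeping of the $r$-dependence. Bounding $\int_{E_m}$ crudely by $\nu_s(E_m)$ times a supremum produces an exponent of $r$ strictly below $1$ once $p<1$, so the gain $O(r)$ is genuine only if the estimate is kept in the integral/maximal-function form above, in which the cell volumes never appear and every auxiliary $\sim$ has constants uniform on $\rho<1/8$. A secondary technical point is supplying the gradient sub-mean value for $\mathcal H$-harmonic functions and, for $0<p<1$, the boundedness of $Q_s$ on $L^p_\alpha$, neither of which appears in Lemma \ref{Lprojection} as stated.
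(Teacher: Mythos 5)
Your reduction of $(I-U\hat T)f$ to an oscillation estimate and the bound $\lvert(I-U\hat T)f(x)\rvert\lesssim r\,Q_s\tilde G(x)$ are fine, and for $p\ge 1$ your argument is essentially sound and close to the paper's (the paper splits the oscillation into two terms $h_1,h_2$, bounding the second via $f(y)-f(a_m)=\int(\mathcal R_s(y,z)-\mathcal R_s(a_m,z))f(z)\,d\nu_s(z)$ and hence $Q_s\circ Q_s$, rather than via $\nabla^h f$ and a maximal function; both routes ultimately rest on the boundedness of $Q_s$ on $L^p_\alpha$ from Lemma \ref{Lprojection}). The genuine gap is the case $0<p<1$, which you defer as a ``secondary technical point'': the claim that $Q_s$ is bounded on $L^p_\alpha$ for $0<p<1$ under $\alpha+n<p(s+n)$ is simply false, no matter how large $s$ is. Test it on $g_\varepsilon=\chi_{\mathbb B_\varepsilon}$: since $[x,y]\sim 1$ when $\lvert y\rvert<\varepsilon$, one gets $Q_sg_\varepsilon\sim\varepsilon^{n}$ on all of $\mathbb B$, so $\|Q_sg_\varepsilon\|_{L^p_\alpha}^p\sim\varepsilon^{np}$ while $\|g_\varepsilon\|_{L^p_\alpha}^p\sim\varepsilon^{n}$, and the ratio $\varepsilon^{-n(1-p)}$ blows up as $\varepsilon\to0$. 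This failure is exactly why the paper's proof treats $0<p<1$ separately and states explicitly that the projection theorem cannot be used there.

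What your argument actually needs is a restricted estimate: $\|Q_sg\|_{L^p_\alpha}\lesssim\|g\|_{L^p_\alpha}$ for $g$ in the special class of slowly oscillating functions ($g(y)\sim g(z)$ whenever $\rho(y,z)\le 1/2$), a property your $\tilde G$ does have. Such an estimate is true, but proving it is not a footnote: one must discretize $Q_sg$ over a fixed coarse lattice, push the power $p$ inside the sum via $\ell^p\subset\ell^1$ (Lemma \ref{Lseq}), integrate the kernel powers by Lemma \ref{LIntonSandB} (this is where $\alpha+n<p(s+n)$ genuinely enters), and control the overlaps with Lemma \ref{LNballs} --- which is precisely the Coifman--Rochberg rearrangement argument that constitutes the paper's entire proof for $0<p<1$, carried out there with the auxiliary $1/2$-lattice $\{b_m\}$. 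So the missing step is the main content of the hard case, not a technicality. Two smaller points in the same case: the identity $P_sf=f$ for $f\in\mathcal B^p_\alpha$, $p<1$, also requires justification (the paper derives it from the inclusion $\mathcal B^p_\alpha\subset\mathcal B^1_{(\alpha+n)/p-n}$ of Lemma \ref{Linclude}), and the gradient sub-mean value inequality you propose to prove by elliptic interior estimates is already available as Lemma \ref{Lgradf}.
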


In the proofs of the previous propositions the constants
depended on the separation constant $r$.
In the proof of Proposition \ref{Patom3}, we will be careful
that the suppressed constants are independent of $r$.
However we will frequently use the fact that $r$ is bounded above by $1/8$.
We prove the cases $p\geq 1$ and $0<p<1$ separately.

\begin{proof}[Proof of Proposition \ref{Patom3} when $p\geq 1$]
Writing $\nu_s(E_m)=\int_{E_m}d\nu_s(y)$, by \eqref{defineU}
and \eqref{defineThat},
\begin{equation*}
 U\hat{T}f(x)=\sum_{m=1}^\infty \int_{E_m} f(a_m)\mathcal R_s(x,a_m)\,d\nu_s(y).
\end{equation*}
By \eqref{scond} and Lemma \ref{Lprojection} we have $P_s f=f$ and so
\begin{equation*}
 f(x)=\sum_{m=1}^\infty\int_{E_m} f(y)\mathcal R_s(x,y)\,d\nu_s(y).
\end{equation*}
Combining these shows
\begin{equation}\label{I-UT}
\begin{split}
  (I-U\hat{T})f(x)&
  =\sum_{m=1}^\infty
  \int_{E_m}\bigl(\mathcal R_s(x,y)-\mathcal R_s(x,a_m)\bigr)f(y)\,d\nu_s(y)\\
  &+\sum_{m=1}^\infty
  \int_{E_m}\mathcal R_s(x,a_m)\big(f(y)-f(a_m)\big)\,d\nu_s(y)
  =:h_1(x)+h_2(x).
\end{split}
\end{equation}
We first estimate $h_1$.
Let $y\in E_m$.
By the mean value theorem of calculus, there exists $\tilde{y}$ lying
on the line segment joining
$a_m$ and $y$ such that
\begin{equation*}
  \mathcal R_s(x,y)-\mathcal R_s(x,a_m)
  =\big\langle\, y-a_m,\nabla_2\mathcal R_s(x,\tilde{y})\,\big\rangle,
\end{equation*}
where $\nabla_2$ means the gradient is taken with respect to the
second variable.
Observe that, because $r$ is bounded above by $1/8$, there are
constants independent of $r$ such that for $y\in E_m\subset E_r(a_m)$,
by Lemma \ref{Lratiobracket}, $[y,a_m]\sim[y,y]=1-\lvert y\rvert^2$.
Thus by \eqref{pseudo},
\begin{equation*}
  \lvert y-a_m\rvert=\rho(y,a_m)[y,a_m]<r[y,a_m]
  \lesssim r(1-\lvert y\rvert^2).
\end{equation*}
Next, since $a_m$ and $y$ are both in the ball $E_r(a_m)$,
so is $\tilde{y}$.
Hence $\rho(y,\tilde{y})<1/4$ and by Lemma \ref{Lratiobracket},
$[x,y]\sim[x,\tilde{y}]$
for every $x\in\mathbb B$ with the constants again not depending on $r$.
Therefore by Lemma \ref{LEstR} (b) and the
symmetry of $\mathcal R_s(\cdot,\cdot)$,
\begin{equation*}
  \big\lvert\nabla_2\mathcal R_s(x,\tilde{y})\big\rvert
  \lesssim\frac{1}{[x,\tilde{y}]^{s+n+1}}\sim\frac{1}{[x,y]^{s+n+1}}.
\end{equation*}
Combining these we see that for $y\in E_m$ and $x\in\mathbb B$,
\begin{equation}\label{EstRdif}
\lvert\mathcal R_s(x,y)-\mathcal R_s(x,a_m)\rvert
\lesssim\frac{r(1-\lvert y\rvert^2)}{[x,y]^{s+n+1}}
\lesssim\frac{r}{[x,y]^{s+n}},
\end{equation}
where in the last inequality we use
$[x,y]\geq 1-\lvert y\rvert$ by \eqref{xybig}.
Thus
\begin{equation*}
\lvert h_1(x)\rvert
\lesssim r\sum_{m=1}^\infty\int_{E_m}\frac{\lvert f(y)\rvert}{[x,y]^{s+n}}\,d\nu_s(y)
=r \int_{\mathbb B}\frac{\lvert f(y)\rvert}{[x,y]^{s+n}}\,d\nu_s(y)
=r Q_s(\lvert f\rvert)(x),
\end{equation*}
and it follows from Lemma \ref{Lprojection} that
$\|h_1\|_{L^p_\alpha}\lesssim r\|f\|_{B^p_\alpha}$.

We now estimate $h_2$.
Let $y\in E_m$.
As above, we have $P_sf=f$, and so
\begin{equation*}
f(y)-f(a_m)=\int_{\mathbb B}\bigl(\mathcal R_s(y,z)-\mathcal R_s(a_m,z)\bigr)
f(z)\,d\nu_s(z).
\end{equation*}
Since $\mathcal R_s(\cdot,\cdot)$ is symmetric, by \eqref{EstRdif},
\begin{equation*}
\lvert\mathcal R_s(y,z)-\mathcal R_s(a_m,z)\rvert
\lesssim\frac{r}{[y,z]^{s+n}},
\end{equation*}
for all $z\in\mathbb B$ with the constants not depending on $r$.
Thus
\begin{equation*}
\lvert f(y)-f(a_m)\rvert
\lesssim r\int_{\mathbb B}\frac{\lvert f(z)\rvert}{[y,z]^{s+n}}\,d\nu_s(z)
=rQ_s(\lvert f\rvert)(y),
\end{equation*}
and so
\begin{equation*}
\lvert h_2(x)\rvert
\lesssim r \sum_{m=1}^\infty
\int_{E_m}\lvert\mathcal R_s(x,a_m)\rvert\,Q_s(\lvert f\rvert)(y)\,d\nu_s(y)
\lesssim r \sum_{m=1}^\infty
\int_{E_m}\frac{Q_s(\lvert f\rvert)(y)}{[x,a_m]^{s+n}}\,d\nu_s(y),
\end{equation*}
where in the last inequality we use Lemma \ref{LEstR} (a).
By Lemma \ref{Lratiosquare} again, we have $[x,a_m]\sim[x,y]$ for
$y\in E_m\subset E_r(a_m)$ since $r<1/8$.
Hence
\begin{equation*}
\lvert h_2(x)\rvert
\lesssim r \int_{\mathbb B}\frac{Q_s(\lvert f\rvert)(y)}{[x,y]^{s+n}}\,d\nu_s(y)
=r Q_s\circ Q_s(\lvert f\rvert)(x),
\end{equation*}
and it follows from Lemma \ref{Lprojection} that
$\|h_2\|_{L^p_\alpha}\lesssim r\|f\|_{B^p_\alpha}$.

We conclude that
$\|(I-U\hat{T})f\|_{\mathcal B^p_\alpha}\leq Cr\|f\|_{B^p_\alpha}$,
with $C$ depending only on $n,\alpha,p,s$.
This finishes the proof when $p\geq 1$.
\end{proof}

In order to prove the case $0<p<1$, we need to do some preparation.
The following inequality is proved in \cite[Theorem 4.7.4 part (b)]{St1}.

\begin{lemma}\label{Lgradf}
Let $0<p<\infty$ and $0<\delta<1/2$.
There exists a constant $C>0$ depending only on $n,p,\delta$ such that
for all $a\in\mathbb B$ and $f\in\mathcal H(\mathbb B)$,
\begin{equation*}
\lvert\nabla^h f(a)\rvert^p
\leq \frac{C}{\delta^n}\int_{E_\delta(a)}\lvert f(y)\rvert^p\,d\tau(y).
\end{equation*}
\end{lemma}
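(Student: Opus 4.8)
The plan is to reduce the estimate to the center $a=0$ using the Möbius invariance of $\nabla^h$ and of the invariant measure $\tau$, and then to prove the resulting base case by combining an interior gradient estimate with the sub-mean value inequality of Lemma~\ref{Lsubmean}.

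First I would set $g=f\circ\varphi_a$. Since $\varphi_a$ is a hyperbolic isometry and $\Delta_h$ is invariant under $\mathcal M(\mathbb B)$, the function $g$ is again $\mathcal H$-harmonic on $\mathbb B$, and by the defining relation $\nabla^h f(a)=-\nabla(f\circ\varphi_a)(0)$ together with \eqref{nablahf} we have $\lvert\nabla^h f(a)\rvert=\lvert\nabla g(0)\rvert$. Moreover the invariance of $\tau$ (immediate from the Jacobian identity \eqref{Jacobian}) and the identity $\varphi_a(\mathbb B_\delta)=E_\delta(a)$ give, after the change of variables $z=\varphi_a(y)$,
\begin{equation*}
\int_{\mathbb B_\delta}\lvert g(y)\rvert^p\,d\tau(y)
=\int_{\mathbb B_\delta}\lvert f(\varphi_a(y))\rvert^p\,d\tau(y)
=\int_{E_\delta(a)}\lvert f(z)\rvert^p\,d\tau(z).
\end{equation*}
Hence it suffices to prove the base case $\lvert\nabla g(0)\rvert^p\lesssim\int_{\mathbb B_\delta}\lvert g\rvert^p\,d\tau$ for every $\mathcal H$-harmonic $g$, with an implied constant depending only on $n,p,\delta$; since the constant in the statement is already allowed to depend on $\delta$, tracking the precise $\delta$-power is unnecessary.

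For the base case I would proceed in two steps. Step one is an interior gradient estimate: by the explicit formula for $\Delta_h$, the equation $\Delta_h g=0$ is uniformly elliptic on $\mathbb B_\delta$ with smooth coefficients whose ellipticity bounds depend only on $n$ and $\delta$, because $\delta<1/2$ keeps $1-\lvert x\rvert^2$ bounded away from $0$ there. Standard interior estimates for such equations then give $\lvert\nabla g(0)\rvert\le C_{n,\delta}\sup_{\mathbb B_{\delta/2}}\lvert g\rvert$. Step two converts this supremum into an $L^p(d\tau)$ average: for $y_0\in\mathbb B_{\delta/2}$ the strong triangle inequality of Lemma~\ref{LstrongTri} yields $E_{\delta/4}(y_0)\subset\mathbb B_\delta$, so Lemma~\ref{Lsubmean} applied at $y_0$ with radius $\delta/4$ gives
\begin{equation*}
\lvert g(y_0)\rvert^p\le\frac{C}{(\delta/4)^n}\int_{E_{\delta/4}(y_0)}\lvert g\rvert^p\,d\tau
\le\frac{C}{(\delta/4)^n}\int_{\mathbb B_\delta}\lvert g\rvert^p\,d\tau.
\end{equation*}
Taking the supremum over $y_0\in\mathbb B_{\delta/2}$ and raising step one to the power $p$ yields the base case for all $0<p<\infty$ simultaneously, and undoing the reduction finishes the proof.

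The main obstacle is step one, the interior bound $\lvert\nabla g(0)\rvert\lesssim\sup_{\mathbb B_{\delta/2}}\lvert g\rvert$: this is the only place where genuine PDE regularity, rather than the identities and covering arguments already available in the paper, enters, and some care is needed to confirm that the ellipticity constants, and hence the resulting bound, depend only on $n$ and $\delta$. An alternative that avoids quoting general elliptic theory would be to represent $g$ on $\mathbb B_\delta$ through a Poisson-type kernel for $\Delta_h$ and differentiate under the integral sign, but invoking the interior estimate is the shortest route to state.
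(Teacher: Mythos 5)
Your proof is correct, but note that it takes a genuinely different route from the paper for the simple reason that the paper does not prove this lemma at all: it is quoted verbatim from Stoll's book, \cite[Theorem 4.7.4, part (b)]{St1}. Your argument is a sound self-contained substitute. The reduction to the origin works: $g=f\circ\varphi_a$ is again $\mathcal H$-harmonic by the M\"obius invariance of $\Delta_h$ (Stoll, Chapter 3), the identity $\lvert\nabla^h f(a)\rvert=\lvert\nabla g(0)\rvert$ is immediate from the defining relation $(\nabla^hf)(a)=-\nabla(f\circ\varphi_a)(0)$ (you do not even need \eqref{nablahf} for this), and the change of variables is exactly the invariance of $\tau$ recorded after \eqref{PseudoBall} together with $\varphi_a(\mathbb B_\delta)=E_\delta(a)$. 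Step two is also fine: for $\lvert y_0\rvert<\delta/2$ the inclusion $E_{\delta/4}(y_0)\subset\mathbb B_\delta$ follows from Lemma \ref{LstrongTri} (indeed the plain triangle inequality suffices), and Lemma \ref{Lsubmean} applies to $g$ since $g\in\mathcal H(\mathbb B)$. The one external ingredient is your step one: dividing $\Delta_h g=0$ by $(1-\lvert x\rvert^2)^2$ yields $\Delta g+\tfrac{2(n-2)}{1-\lvert x\rvert^2}\langle x,\nabla g\rangle=0$, whose coefficients on $\mathbb B_\delta\subset\mathbb B_{1/2}$ are smooth with bounds depending only on $n$, so interior Schauder estimates do give $\lvert\nabla g(0)\rvert\le C_{n,\delta}\sup_{\mathbb B_{\delta/2}}\lvert g\rvert$ with the claimed dependence. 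The trade-off between the two routes: the paper's citation keeps everything inside the hyperbolic function theory toolkit it already relies on, whereas your proof makes the lemma self-contained at the cost of importing general elliptic regularity theory, which is the only place such machinery would appear in the paper; your suggested alternative (a Poisson-type representation for $\Delta_h$ on a small ball, differentiated under the integral) would remove even that dependence, and is closer in spirit to how such gradient estimates are obtained in Stoll's book.
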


The next lemma is a special case of Theorem \ref{Tinclusion}
part (a).

\begin{lemma}\label{Linclude}
Let $0<p<1$ and $\alpha>-1$.
Then $\mathcal B^p_{\alpha}\subset\mathcal B^1_{(\alpha+n)/p-n}$
and the inclusion is continuous.
\end{lemma}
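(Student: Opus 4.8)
The plan is to prove the inclusion by a direct pointwise-estimate argument, deliberately avoiding any appeal to the atomic decomposition (which would be circular, since this lemma is one of the ingredients used to establish Proposition~\ref{Patom3}). Write $\beta:=(\alpha+n)/p-n$. First I would check that the target space makes sense, i.e.\ that $\beta>-1$: this is equivalent to $\alpha+n>p(n-1)$, which holds because $\alpha+n>n-1\geq p(n-1)$ when $0<p<1$ (using $n-1\geq 0$). Thus $\mathcal B^1_\beta$ is a genuine weighted Bergman space.

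The key tool is a pointwise growth estimate for $\mathcal H$-harmonic functions in $\mathcal B^p_\alpha$. Applying Lemma~\ref{Lsubmean} with a fixed $\delta$ (say $\delta=1/2$) and writing $d\tau(y)=(1-|y|^2)^{-n}\,d\nu(y)$, one has for every $a\in\mathbb B$
\[
|f(a)|^p\lesssim\int_{E_\delta(a)}|f(y)|^p(1-|y|^2)^{-n}\,d\nu(y).
\]
On $E_\delta(a)$ we have $(1-|y|^2)\sim(1-|a|^2)$ by Lemma~\ref{Lratiosquare}, so there $(1-|y|^2)^{-n}\sim(1-|a|^2)^{-(\alpha+n)}(1-|y|^2)^\alpha$; pulling the factor $(1-|a|^2)^{-(\alpha+n)}$ out of the integral and bounding the remaining integral by $\|f\|_{\mathcal B^p_\alpha}^p$ yields the estimate
\[
|f(a)|\lesssim\frac{\|f\|_{\mathcal B^p_\alpha}}{(1-|a|^2)^{(\alpha+n)/p}}\qquad(a\in\mathbb B).
\]

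With this in hand, the inclusion follows from the factorization $|f|=|f|^p\,|f|^{1-p}$, valid since $0<p<1$. I would apply the pointwise estimate \emph{only} to the factor $|f|^{1-p}$, obtaining
\[
\|f\|_{\mathcal B^1_\beta}
=\int_{\mathbb B}|f(x)|^p|f(x)|^{1-p}(1-|x|^2)^\beta\,d\nu(x)
\lesssim\|f\|_{\mathcal B^p_\alpha}^{1-p}\int_{\mathbb B}|f(x)|^p(1-|x|^2)^{\beta-(\alpha+n)(1-p)/p}\,d\nu(x).
\]
The crucial point---and really the only thing that needs checking---is that the weight exponent collapses exactly to $\alpha$: indeed $\beta-(\alpha+n)(1-p)/p=(\alpha+n)/p-n-(\alpha+n)/p+(\alpha+n)=\alpha$. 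Hence the last integral equals $\|f\|_{\mathcal B^p_\alpha}^p$, giving $\|f\|_{\mathcal B^1_\beta}\lesssim\|f\|_{\mathcal B^p_\alpha}^{1-p}\|f\|_{\mathcal B^p_\alpha}^p=\|f\|_{\mathcal B^p_\alpha}$, which is simultaneously the inclusion and its continuity. There is no serious obstacle here: the whole argument hinges on recognizing the exact cancellation in the exponent, and the definition $\beta=(\alpha+n)/p-n$ is precisely what is engineered to make it work.
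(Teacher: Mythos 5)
Your proof is correct and follows essentially the same route as the paper: the pointwise growth bound $\lvert f(x)\rvert\lesssim(1-\lvert x\rvert^2)^{-(\alpha+n)/p}\|f\|_{\mathcal B^p_\alpha}$ combined with the factorization $\lvert f\rvert=\lvert f\rvert^p\lvert f\rvert^{1-p}$ and the exact cancellation of exponents down to $\alpha$. The only difference is cosmetic: the paper cites the pointwise estimate from Stoll's book (Eqn.\ (10.1.5)), whereas you rederive it from Lemma~\ref{Lsubmean} and Lemma~\ref{Lratiosquare}, which is a valid, self-contained alternative.
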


\begin{proof}
By \cite[Eqn.~(10.1.5)]{St1}, there exists a constant $C>0$ depending
only on $n,\alpha,p$ such that
\begin{equation}\label{pointeva}
\lvert f(x)\rvert
\leq\frac{C}{(1-\lvert x\rvert^2)^{(\alpha+n)/p}}
\|f\|_{\mathcal B^p_\alpha},
\end{equation}
for all $x\in\mathbb B$ and $f\in\mathcal B^p_\alpha$.
In the integral below writing
$\lvert f(x)\rvert=\lvert f(x)\rvert^p\lvert f(x)\rvert^{1-p}$ and
applying  \eqref{pointeva} to the factor $\lvert f(x)\rvert^{1-p}$, we deduce
\begin{align*}
\int_{\mathbb B}\lvert f(x)\rvert
(1-\lvert x\rvert^2)^{(\alpha+n)/p-n}\,d\nu(x)
&\leq C^{1-p}\|f\|_{\mathcal B^p_\alpha}^{1-p}
\int_{\mathbb B}\lvert f(x)\rvert^p(1-\lvert x\rvert^2)^\alpha
\,d\nu(x)\\
&=C^{1-p}\|f\|_{\mathcal B^p_\alpha}.\qedhere
\end{align*}
\end{proof}

\begin{proof}[Proof of Proposition \ref{Patom3} when $0<p<1$]
In this part of the proof we can not use the projection theorem
which requires $p\geq 1$.
Instead, we will follow \cite[p.~19]{CR} and use a suitable
rearrangement of the sequence $\{a_m\}$ as described below.

Pick a $1/2$-lattice $\{b_m\}$ and fix it throughout the proof.
Denote the associated sequence of sets as described in Lemma \ref{LEm}
to the lattice $\{b_m\}$ as $\{D_m\}$.
That is, the sets $D_m$ are disjoint,
$\bigcup_{m=1}^\infty D_m=\mathbb B$ and
\begin{equation*}
E_{1/4}(b_m)\subset D_m\subset E_{1/2}(b_m)
\qquad (m=1,2,\dots).
\end{equation*}
Given an $r$-lattice $\{a_m\}$ with $r<1/8$,
renumber $\{a_m\}$ in the following way.
Call the elements of $\{a_m\}$ that are in $D_1$ as
$a_{11},a_{12},\dots,a_{1\kappa_1}$ and in general call the points
of $\{a_m\}$ that are in $D_m$ as
$a_{m1},a_{m2},\dots,a_{m\kappa_m}$.
Denote the sets given in Lemma \ref{LEm} corresponding
to this renumbering as $E_{mk}$.
That is, $E_{mk}$ are disjoint,
$\bigcup_{m=1}^\infty\bigcup_{k=1}^{\kappa_m}E_{mk}=\mathbb B$ and
\begin{equation*}
E_{r/2}(a_{mk})\subset E_{mk}\subset E_r(a_{mk})
\qquad (m=1,2,\dots,\ k=1,2,\dots,\kappa_m).
\end{equation*}
By the above construction, since $a_{mk}\in D_m\subset E_{1/2}(b_m)$,
we have
\begin{equation}\label{rhoamkbm}
\rho(a_{mk},b_m)<1/2
\qquad (m=1,2,\dots,\ k=1,2,\dots,\kappa_m),
\end{equation}
and by the triangle inequality and the fact that $r<1/8$,
\begin{equation}\label{Emk}
E_{mk}\subset E_{5/8}(b_m).
%\qquad m=1,2,\dots,\ k=1,2,\dots,\kappa_m,
\end{equation}

Suppose now $f\in\mathcal B^p_\alpha$.
We claim that $P_s f=f$.
This is true because by Lemma \ref{Linclude}, $f$ is
in $\mathcal B^1_{(\alpha+n)/p-n}$ and for this space the
required condition in Lemma \ref{Lprojection} is
$s>(\alpha+n)/p-n$ which holds by \eqref{scond}.
Therefore
\begin{equation*}
f(x)=\int_{\mathbb B}f(y)\mathcal R_s(x,y)\,d\nu_s(y)
=\sum_{m=1}^\infty\sum_{k=1}^{\kappa_m}
\int_{E_{mk}}f(y)\mathcal R_s(x,y)\,d\nu_s(y).
\end{equation*}
Next, with the above rearrangement, by \eqref{defineU} and
\eqref{defineThat},
\begin{equation*}
U\hat{T}f(x)=\sum_{m=1}^\infty\sum_{k=1}^{\kappa_m}
f(a_{mk})\nu_s(E_{mk})\mathcal R_s(x,a_{mk})
\end{equation*}
and so, similar to \eqref{I-UT}, we have
\begin{align*}
(I-U\hat{T})f(x)
&=\sum_{m=1}^\infty\sum_{k=1}^{\kappa_m}
\int_{E_{mk}}\bigl(\mathcal R_s(x,y)-\mathcal R_s(x,a_{mk})\bigr)f(y)\,d\nu_s(y)\\
&+\sum_{m=1}^\infty\sum_{k=1}^{\kappa_m}
\int_{E_{mk}}\mathcal R_s(x,a_{mk})\big(f(y)-f(a_{mk})\big)\,d\nu_s(y)
=:h_1(x)+h_2(x).
\end{align*}

We first estimate $h_1$.
We will again be careful in the estimates below that the suppressed constants
are independent of the separation constant $r$.
Let $y\in E_{mk}$.
First, as is shown in \eqref{EstRdif}, for all $x\in\mathbb B$,
\begin{equation}\label{Rdif}
\lvert\mathcal R_s(x,y)-\mathcal R_s(x,a_{mk})\rvert
\lesssim\frac{r}{[x,y]^{s+n}}
\lesssim\frac{r}{[x,b_m]^{s+n}},
\end{equation}
where in the last inequality we use Lemma \ref{Lratiobracket} and the fact that
$\rho(y,b_m)<5/8$ by \eqref{Emk}.
Next, applying Lemma \ref{Lsubmean} with $\delta=1/8$ and noting that
$E_{1/8}(y)\subset E_{3/4}(b_m)$, we obtain
\begin{equation}\label{fsmall}
\begin{split}
\lvert f(y)\rvert^p
\lesssim\int_{E_{1/8}(y)}\lvert f(z)\rvert^p\, d\tau(z)
&\leq\int_{E_{3/4}(b_m)}\lvert f(z)\rvert^p\, d\tau(z)\\
&\lesssim (1-\lvert b_m\rvert^2)^{-(\alpha+n)}
\int_{E_{3/4}(b_m)}\lvert f(z)\rvert^p\, d\nu_\alpha(z),
\end{split}
\end{equation}
where in the last inequality we use
$(1-\lvert z\rvert^2)\sim(1-\lvert b_m\rvert^2)$ for $z\in E_{3/4}(b_m)$
by Lemma \ref{Lratiosquare}.
Combining \eqref{Rdif} and \eqref{fsmall} we deduce
\begin{equation}\label{Esth1}
\lvert h_1(x)\rvert
\lesssim r\sum_{m=1}^\infty
\frac{(1-\lvert b_m\rvert^2)^{-(\alpha+n)/p}}{[x,b_m]^{s+n}}
\bigg(\int_{E_{3/4}(b_m)}\lvert f(z)\rvert^p d\nu_\alpha(z)\bigg)^{\frac{1}{p}}
\sum_{k=1}^{\kappa_m}\nu_s(E_{mk}).
\end{equation}
Since the sets $E_{mk}$ are disjoint and $E_{mk}\subset E_{5/8}(b_m)$
for every $k=1,\dots,\kappa_m$ by \eqref{Emk},
we have $\sum_{k=1}^{\kappa_m}\nu_s(E_{mk})\leq\nu_s(E_{5/8}(b_m))$.
Also $\nu_s(E_{5/8}(b_m))\sim(1-\lvert b_m\rvert^2)^{s+n}$ by Lemma
\ref{Lratiosquare} and \eqref{PseudoBall}.
Using this and Lemma \ref{Lseq} (i) shows
\begin{equation*}
\lvert h_1(x)\rvert^p
\lesssim r^p\sum_{m=1}^\infty
\frac{(1-\lvert b_m\rvert^2)^{p(s+n)-(\alpha+n)}}{[x,b_m]^{p(s+n)}}
\int_{E_{3/4}(b_m)}\lvert f(z)\rvert^p\, d\nu_\alpha(z).
\end{equation*}
Integrating over $\mathbb B$ with respect to $d\nu_\alpha$,
applying Fubini's theorem, and noting that
\begin{equation*}
(1-\lvert b_m\rvert^2)^{p(s+n)-(\alpha+n)}
\int_{\mathbb B}\frac{d\nu_\alpha(x)}{[x,b_m]^{p(s+n)}}
\lesssim 1,
\end{equation*}
by Lemma \ref{LIntonSandB} and \eqref{scond}, we obtain
\begin{equation}
\|h_1\|^p_{L^p_\alpha}
\lesssim r^p\sum_{m=1}^\infty
\int_{E_{3/4}(b_m)}\lvert f(z)\rvert^p\, d\nu_\alpha(z).
\end{equation}
Finally, by Lemma \ref{LNballs}, there exists $N$ such that every
$z\in\mathbb B$ belongs at most $N$ of the balls $E_{3/4}(b_m)$,
and so
$\sum_{m=1}^\infty
\int_{E_{3/4}(b_m)}\lvert f(z)\rvert^p\, d\nu_\alpha(z)
\leq N\int_{\mathbb B}\lvert f(z)\rvert^p\, d\nu_\alpha(z)$.
We conclude that
\begin{equation}\label{h1norm}
\|h_1\|^p_{L^p_\alpha}
\lesssim r^p \|f\|^p_{L^p_\alpha}.
\end{equation}

We next estimate $h_2$.
Again, let $y\in E_{mk}$.
By the mean-value theorem of calculus, there exists $\tilde{y}$
lying on the line segment joining $a_{mk}$ and $y$ such that
\begin{equation*}
f(y)-f(a_{mk})=\big\langle y-a_{mk},\nabla f(\tilde{y})\big\rangle.
\end{equation*}
Since $y\in E_{mk}\subset E_r(a_{mk})$, the point $\tilde{y}$
is also in the ball $E_r(a_{mk})$ and since $r<1/8$,
\begin{equation}\label{tildeyamk}
\rho(\tilde{y},a_{mk})<1/8.
\end{equation}
As before, by \eqref{pseudo},
\begin{equation*}
\lvert y-a_{mk}\rvert=\rho(y,a_{mk})[y,a_{mk}]<r[y,a_{mk}],
\end{equation*}
and since $\rho(y,a_{mk})<r<1/8$, we have
$[y,a_{mk}]\sim [a_{mk},a_{mk}]=(1-\lvert a_{mk}\rvert^2)$
by Lemma \ref{Lratiobracket}.
Therefore
\begin{equation*}
\lvert y-a_{mk}\rvert
\lesssim r(1-\lvert a_{mk}\rvert^2)
\sim r(1-\lvert\tilde{y}\rvert^2),
\end{equation*}
where in the last part we use Lemma \ref{Lratiosquare} and
\eqref{tildeyamk}.
This shows that
\begin{equation*}
\lvert f(y)-f(a_{mk})\rvert
\lesssim r(1-\lvert\tilde{y}\rvert^2)\lvert\nabla f(\tilde{y})\rvert
=r\lvert\nabla^h f(\tilde{y})\rvert,
\end{equation*}
by \eqref{nablahf}.
Next, applying Lemma \ref{Lgradf} with $\delta=1/8$ and then using
$E_{1/8}(\tilde{y})\subset E_{3/4}(b_m)$ which follows from
\eqref{tildeyamk} and \eqref{rhoamkbm}, we obtain
\begin{align*}
\lvert\nabla^h f(\tilde{y})\rvert^p
\lesssim\int_{E_{1/8}(\tilde{y})}\lvert f(z)\rvert^p\,d\tau(z)
&\leq\int_{E_{3/4}(b_m)}\lvert f(z)\rvert^p\,d\tau(z)\\
&\lesssim(1-\lvert b_m\rvert^2)^{-(\alpha+n)}
\int_{E_{3/4}(b_m)}\lvert f(z)\rvert^p\, d\nu_\alpha(z),
\end{align*}
similar to \eqref{fsmall}.
Using also that
\begin{equation*}
\lvert\mathcal R_s(x,a_{mk})\rvert
\lesssim\frac{1}{[x,a_{mk}]^{s+n}}
\sim\frac{1}{[x,b_m]^{s+n}},
\end{equation*}
which follows from Lemma \ref{LEstR} (a) and Lemma \ref{Lratiobracket}
with \eqref{rhoamkbm}, we conclude that
\begin{equation*}
\lvert h_2(x)\rvert
\lesssim r\sum_{m=1}^\infty
\frac{(1-\lvert b_m\rvert^2)^{-(\alpha+n)/p}}{[x,b_m]^{s+n}}
\bigg(\int_{E_{3/4}(b_m)}\lvert f(z)\rvert^p d\nu_\alpha(z)\bigg)^{\frac{1}{p}}
\sum_{k=1}^{\kappa_m}\nu_s(E_{mk}).
\end{equation*}
This estimate is same as \eqref{Esth1}.
Thus we again have
$\|h_2\|^p_{L^p_\alpha}\lesssim r^p\|f\|^p_{\mathcal B^p_\alpha}$
and hence
$\|(I-U\hat{T})f\|^p_{\mathcal B^p_\alpha}
\leq \|h_1\|^p_{L^p_\alpha}+\|h_2\|^p_{L^p_\alpha}
\lesssim r^p\|f\|^p_{\mathcal B^p_\alpha}$.
We conclude that
$\|(I-U\hat{T})\|\leq Cr$, where $C$ depends only
on $n,p,\alpha,s$.
\end{proof}

Proposition \ref{Patom3} immediately implies Theorem \ref{Tatomic}.

\begin{proof}[Proof of Theorem \ref{Tatomic}]
By Proposition \ref{Patom3}, if $r$ is small enough, then
$\|I-U\hat{T}\|<1$, and so $U\hat{T}$ has bounded inverse.
Given $f\in\mathcal B^p_\alpha$, if we let
$\lambda=\hat{T}(U\hat{T})^{-1}f$, then $\lambda\in\ell^p$,
$U\lambda=f$, and $\|\lambda\|_{\ell^p}\sim\|f\|_{\mathcal B^p_\alpha}$.
We note that in the equivalence
$\|\lambda\|_{\ell^p}\sim\|f\|_{\mathcal B^p_\alpha}$,
the suppressed constants depend also on $r$.
\end{proof}

To justify that one can replace \eqref{atomFrm} with \eqref{newad} in
Theorem \ref{Tatomic}, just change
$U\lambda$ in \eqref{defineU} with
$U\lambda(x)=\sum_{m=1}^\infty\lambda_m
\mathcal R_s(x,a_m)/\|\mathcal R_s(\cdot,a_m)\|_{\mathcal B^p_\alpha}$,
and $\hat{T}f$ in \eqref{defineThat} with
$\hat{T}f=\{f(a_m)\,\|\mathcal R_s(\cdot,a_m)\|_{\mathcal B^p_\alpha}\,\nu_s(E_m)\}$.
Then $U\hat{T}$ remains the same and so is Proposition \ref{Patom3}.
The changes in the proofs of Propositions \ref{Patom1} and \ref{Patom2} requires
only \eqref{IntRpower}.

\section{Interpolation}\label{SInterp}

To prove Theorem \ref{TInterp} we again consider two operators.
One is $\hat{U}\colon\ell^p\to\mathcal B^p_\alpha$, a slightly
modified version of $U$ given in \eqref{defineU} and the other is
$T\colon\mathcal B^p_\alpha\to\ell^p$,\begin{equation*}
Tf=\big\{f(a_m)(1-\lvert a_m\rvert^2)^{(\alpha+n)/p}\big\},
\end{equation*}
given in \eqref{defineT}.
Our main purpose is to show that the composition
$T\hat{U}:\ell^p\to\ell^p$ is invertible when the separation
constant is large enough.

\begin{proposition}\label{PInterp1}
For $\alpha>-1$ and $0<p<\infty$, choose $s$ so that \eqref{scond} holds
and let $0<r<1$.
If $\{a_m\}$ is $r$-separated, then the operator
$\hat{U}\colon\ell^p\to\mathcal B^p_\alpha$ mapping $\lambda=\{\lambda_m\}$ to
\begin{equation}\label{defineUhat}
\hat{U}\lambda(x)
=\sum_{m=1}^\infty\lambda_m(1-\lvert a_m\rvert^2)^{-(\alpha+n)/p}\,
\frac{\mathcal R_s(x,a_m)}{\mathcal R_s(a_m,a_m)}\qquad (x\in\mathbb B)
\end{equation}
is bounded.
The above series converges absolutely and uniformly on compact subsets of $\mathbb B$,
and also in $\|\cdot\|_{\mathcal B^p_\alpha}$.
\end{proposition}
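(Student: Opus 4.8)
The plan is to reduce this proposition directly to Proposition \ref{Patom1}, which establishes exactly these conclusions for the closely related operator $U$ of \eqref{defineU}. The operators $\hat U$ and $U$ differ only in the scalar coefficient multiplying $\mathcal R_s(x,a_m)$: in $U$ it is $\lambda_m(1-\lvert a_m\rvert^2)^{s+n-(\alpha+n)/p}$, while in $\hat U$ it is $\lambda_m(1-\lvert a_m\rvert^2)^{-(\alpha+n)/p}/\mathcal R_s(a_m,a_m)$. The key observation is that the diagonal estimate \eqref{EstRtwosided}, $\mathcal R_s(a_m,a_m)\sim(1-\lvert a_m\rvert^2)^{-(s+n)}$, whose implied constants depend only on $n,s$ and are in particular independent of $m$, makes these two coefficient choices comparable. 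Dividing, I obtain
\[
\frac{(1-\lvert a_m\rvert^2)^{-(\alpha+n)/p}}{\mathcal R_s(a_m,a_m)}\sim(1-\lvert a_m\rvert^2)^{s+n-(\alpha+n)/p},
\]
again with constants uniform in $m$.

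Next I would define $c_m$ to be the ratio of the coefficient appearing in $\hat U$ to the one appearing in $U$, that is
\[
c_m:=\frac{(1-\lvert a_m\rvert^2)^{-(\alpha+n)/p}}{\mathcal R_s(a_m,a_m)\,(1-\lvert a_m\rvert^2)^{s+n-(\alpha+n)/p}},
\]
so that by the previous step $c_m\sim 1$ with bounds independent of $m$. With this notation the defining series of $\hat U$ factors as
\[
\hat U\lambda(x)=\sum_{m=1}^\infty(c_m\lambda_m)(1-\lvert a_m\rvert^2)^{s+n-(\alpha+n)/p}\,\mathcal R_s(x,a_m)=U(M_c\lambda)(x),
\]
where $M_c\colon\{\lambda_m\}\mapsto\{c_m\lambda_m\}$ is multiplication by the bounded sequence $\{c_m\}$. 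Since $c_m\sim 1$, the operator $M_c$ is bounded on $\ell^p$ (indeed $\|M_c\lambda\|_{\ell^p}\leq(\sup_m c_m)\|\lambda\|_{\ell^p}$), so $\hat U=U\circ M_c$ is a composition of bounded operators and is therefore bounded from $\ell^p$ into $\mathcal B^p_\alpha$ by Proposition \ref{Patom1}.

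Finally, the three modes of convergence transfer without extra work: for any $\lambda\in\ell^p$ the sequence $M_c\lambda$ again lies in $\ell^p$, so Proposition \ref{Patom1} guarantees that the series defining $U(M_c\lambda)$ converges absolutely and uniformly on compact subsets of $\mathbb B$ and in $\|\cdot\|_{\mathcal B^p_\alpha}$; but this is precisely the series \eqref{defineUhat} for $\hat U\lambda$. I do not expect any genuine obstacle here: the entire substance of the estimate is already contained in Proposition \ref{Patom1}, and the only new ingredient is the uniform two-sided comparison \eqref{EstRtwosided} that renders the two coefficient normalizations interchangeable up to a bounded, boundedly invertible diagonal multiplier on $\ell^p$.
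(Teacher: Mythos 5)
Your proof is correct and takes essentially the same approach as the paper: the paper likewise reduces Proposition \ref{PInterp1} to Proposition \ref{Patom1} by invoking the diagonal estimate \eqref{EstRtwosided}, stating that the argument goes through with minor changes. Your factorization $\hat U = U\circ M_c$ through a bounded diagonal multiplier $M_c$ (with $c_m\sim 1$ uniformly) is simply a clean, black-box way of implementing exactly those changes, and it transfers all three conclusions at once since the two series agree term by term.
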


Noting that $\mathcal R_s(a_m,a_m)\sim (1-\lvert a_m\rvert^2)^{-(s+n)}$
by \eqref{EstRtwosided}, this proposition can be proved in the same way
as Proposition \ref{Patom1}.
The minor changes required are omitted.

\begin{proposition}\label{PInterp2}
For $\alpha>-1$ and $0<p<\infty$, choose $s$ so that \eqref{scond} holds.
There exists $1/2<r_0<1$ depending only on $n,\alpha,p,s$
such that if $\{a_m\}$ is $r$-separated with $r>r_0$,
then $\|T\hat{U}-I\|_{\ell^p\to\ell^p}<1$.
\end{proposition}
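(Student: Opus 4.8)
The plan is to show that, thanks to the normalisation built into $\hat U$, the matrix of $T\hat U$ has every diagonal entry equal to $1$, so that $B:=T\hat U-I$ is a purely off-diagonal matrix whose entries are small once the points are far apart. Writing $\theta=(\alpha+n)/p$, a direct computation from \eqref{defineUhat} and \eqref{defineT} gives
\[
(T\hat U)_{km}=(1-\lvert a_k\rvert^2)^{\theta}(1-\lvert a_m\rvert^2)^{-\theta}\,\frac{\mathcal R_s(a_k,a_m)}{\mathcal R_s(a_m,a_m)},
\]
so $(T\hat U)_{mm}=1$ and $B$ has zero diagonal. For $k\neq m$, Lemma \ref{LEstR}(a) and \eqref{EstRtwosided} give
\[
\lvert B_{km}\rvert\lesssim(1-\lvert a_k\rvert^2)^{\theta}(1-\lvert a_m\rvert^2)^{(s+n)-\theta}\,\frac{1}{[a_k,a_m]^{s+n}},
\]
and inserting the identity $[a_k,a_m]^2\bigl(1-\rho_{km}^2\bigr)=(1-\lvert a_k\rvert^2)(1-\lvert a_m\rvert^2)$ (which is \eqref{MobiusIdnt} written with $\rho_{km}:=\rho(a_k,a_m)$) rewrites this as
\[
\lvert B_{km}\rvert\lesssim\Bigl(\tfrac{1-\lvert a_k\rvert^2}{1-\lvert a_m\rvert^2}\Bigr)^{\beta}\bigl(1-\rho_{km}^2\bigr)^{(s+n)/2},\qquad \beta:=\theta-\tfrac{s+n}{2}.
\]
Every entry thus carries the factor $(1-\rho_{km}^2)^{(s+n)/2}\leq(1-r^2)^{(s+n)/2}$, and it is this factor that will force $\|B\|$ to $0$ as $r\to1$.

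The technical device I would use to convert these pointwise bounds into norm estimates is a uniform tail bound for the integrals of Lemma \ref{LIntonSandB}: for $-1<b<s$ there are $\eta>0$ and $C$, independent of $a\in\mathbb B$ and of $r$, with
\[
\int_{\{\rho(y,a)\geq r'\}}\frac{(1-\lvert y\rvert^2)^{b}}{[y,a]^{s+n}}\,d\nu(y)\leq C\,(1-r'^2)^{\eta}\,(1-\lvert a\rvert^2)^{b-s}.
\]
On the region $\rho(y,a)\geq r'$ the same identity gives $[y,a]\geq\bigl((1-\lvert y\rvert^2)(1-\lvert a\rvert^2)/(1-r'^2)\bigr)^{1/2}$; applying this to the factor $[y,a]^{-(s+n-\sigma)}$ extracts $(1-r'^2)^{(s+n-\sigma)/2}$ and leaves $\int(1-\lvert y\rvert^2)^{b-(s+n-\sigma)/2}[y,a]^{-\sigma}\,d\nu$, which Lemma \ref{LIntonSandB} estimates; one checks that for a suitable $\sigma\in(0,s+n)$ the powers of $(1-\lvert a\rvert^2)$ collapse to $b-s$ and $\eta=(s+n-\sigma)/2>0$. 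Since the balls $E_{r/2}(a_k)$ with $k\neq m$ are disjoint and, by the strong triangle inequality, contained in $\{\rho(y,a_m)\geq r'\}$ with $r'=\tfrac{r/2}{1-r^2/2}\to1$, any sum $\sum_{k\neq m}$ of entries is dominated by the corresponding tail integral after replacing each summand by its integral over $E_{r/2}(a_k)$ (there $1-\lvert y\rvert^2\sim 1-\lvert a_k\rvert^2$ and $[y,a_m]\sim[a_k,a_m]$ by Lemmas \ref{Lratiosquare} and \ref{Lratiobracket}).

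For $0<p\leq1$ this finishes the proof cleanly. By subadditivity of $t\mapsto t^p$ one has $\|B\|_{\ell^p\to\ell^p}^p\leq\sup_m\sum_{k}\lvert B_{km}\rvert^{p}$, and raising the entry bound to the power $p$ and comparing with the tail integral reduces $\sum_k\lvert B_{km}\rvert^p$ to the case $b=p\theta-n$ (with $s+n$ replaced by $p(s+n)$). The hypothesis $b>-1$ then reads $p\theta>n-1$, i.e.\ $\alpha+n>n-1$, which holds merely because $\alpha>-1$; the tail estimate gives $\sum_k\lvert B_{km}\rvert^p\lesssim(1-r^2)^{\eta}$ uniformly in $m$, so $\|B\|_{\ell^p\to\ell^p}<1$ once $r$ is close to $1$.

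For $1\leq p<\infty$ I would run the Schur test with weights $(1-\lvert a_m\rvert^2)^{t}$, reducing the weighted sums $\sum_k\lvert B_{km}\rvert(1-\lvert a_k\rvert^2)^{tp'}$ and $\sum_m\lvert B_{km}\rvert(1-\lvert a_m\rvert^2)^{tp}$ to tail integrals with $b=\theta+tp'-n$ and $b=s-\theta+tp$; the homogeneities are arranged so the bounds are $(1-r^2)^{\eta}(1-\lvert a_m\rvert^2)^{tp'}$ and $(1-r^2)^{\eta}(1-\lvert a_k\rvert^2)^{tp}$, whence $\|B\|_{\ell^p\to\ell^p}\lesssim(1-r^2)^{\eta}$. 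The main obstacle, which I expect to be the delicate point, is that the constraints $b>-1$ (first sum) and $b<s$ (second sum) admit a common $t$ only when $\theta$ is not too small; for $\theta\leq n-1$ (that is, large $p$ when $n\geq2$) the tail integral in the first sum diverges even though the sum over an $r$-separated sequence is finite, because the integral does not reflect the sparsity of separated points near $\partial\mathbb B$. To cover this range I would estimate the offending sum directly by a two–parameter dyadic decomposition in $\rho_{km}$ and in $1-\lvert a_k\rvert^2$, counting points in each pseudo-hyperbolic shell by the invariant-volume argument of Lemma \ref{LNballs} and exploiting both the gain $(1-\rho_{km}^2)^{(s+n)/2}\leq(1-r^2)^{(s+n)/2}$ and the freedom to enlarge $s$. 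Choosing finally $r_0<1$ with $C(1-r_0^2)^{\eta}<1$ yields $\|T\hat U-I\|_{\ell^p\to\ell^p}<1$ for all $r>r_0$, as required.
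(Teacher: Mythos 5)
Your strategy coincides with the paper's: the entrywise bound on $B=T\hat U-I$ is the paper's \eqref{mthcomp}, your uniform tail bound is a quantitative version of Lemma \ref{LsmallInt}, and replacing each term of a sum over an $r$-separated sequence by an integral over the disjoint balls $E_{r/2}(a_k)$, all contained in $\{\rho(\cdot,a_m)\ge r'\}$ with $r'\to1$ as $r\to1$, is exactly how the paper proves its key Lemma \ref{LsmallSeries}. Your case $0<p\le1$, via $\|B\|_{\ell^p\to\ell^p}^p\le\sup_m\sum_k\lvert B_{km}\rvert^p$, is correct and is the same computation as the paper's (subadditivity plus Fubini), with the same parameter checks: $\alpha+n>n-1$ and \eqref{scond}.

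The case $1<p<\infty$ contains a genuine error: you have paired the Schur-test exponents with the wrong sums. Writing $\theta=(\alpha+n)/p$ as you do, and since $(B\lambda)_k=\sum_m B_{km}\lambda_m$, the test for $\ell^p$-boundedness requires
\[
\sum_m \lvert B_{km}\rvert\,\gamma_m^{p'}\le C\gamma_k^{p'}\ \ \text{for all }k,
\qquad
\sum_k \lvert B_{km}\rvert\,\gamma_k^{p}\le C\gamma_m^{p}\ \ \text{for all }m,
\]
whereas you estimate $\sum_k\lvert B_{km}\rvert\gamma_k^{p'}$ and $\sum_m\lvert B_{km}\rvert\gamma_m^{p}$; that is the Schur test for $\ell^{p'}$ (equivalently, for the transpose of $B$), not for $\ell^p$. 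This mispairing is precisely what manufactures your ``delicate point'': with $\gamma_m=(1-\lvert a_m\rvert^2)^t$, your two windows for $t$ intersect only if $(n-1)/p'<\theta$, i.e.\ $(n-1)(p-1)<\alpha+n$, which indeed fails for large $p$ when $n\ge2$. With the correct pairing the windows are $\theta-s-1<tp'<\theta$ and $n-1-\theta<tp<s+n-\theta$, and the paper's choice $t=(n-1)/(pp')$ lies in both for every $1<p<\infty$: the four inequalities reduce to $\alpha+n>n-1$ (true since $\alpha>-1$) and $(\alpha+1)/p<s+1$ (which is \eqref{scond}). So the obstruction is an artifact, no two-parameter dyadic decomposition is needed, and the Schur argument closes uniformly in $p$. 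As written, your proof of the range $1<p<\infty$ is therefore incomplete: the proposed patch is only a plan, and it moreover invokes ``the freedom to enlarge $s$,'' which the statement does not grant --- $s$ is fixed subject only to \eqref{scond} and enters the definition \eqref{defineUhat} of $\hat U$, so the estimate must be proved for that very $s$.
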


This proposition immediately implies Theorem \ref{TInterp},
similar to the proof of Theorem \ref{Tatomic} above.

To verify Proposition \ref{PInterp2}, let $\lambda=\{\lambda_m\}\in\ell^p$.
Then the $m$-th component of the sequence $(T\hat{U}-I)\lambda$ is given by
\begin{equation*}
\{(T\hat{U}-I)\lambda\}_m
=(1-\lvert a_m\rvert^2)^{(\alpha+n)/p}
\sum_{\substack{k=1\\ k\ne m}}^\infty\lambda_k
(1-\lvert a_k\rvert^2)^{-(\alpha+n)/p}
\frac{\mathcal R_s(a_m,a_k)}{\mathcal R_s(a_k,a_k)},
\end{equation*}
and by Lemma \ref{LEstR} (a) and \eqref{EstRtwosided}, we have
\begin{equation}\label{mthcomp}
\bigl\lvert\{(T\hat{U}-I)\lambda\}_m\bigr\rvert
\leq C(1-\lvert a_m\rvert^2)^{(\alpha+n)/p}
\sum_{\substack{k=1\\ k\ne m}}^\infty\lvert\lambda_k\rvert\,
\frac{(1-\lvert a_k\rvert^2)^{s+n-(\alpha+n)/p}}{[a_m,a_k]^{s+n}},
\end{equation}
where the constant $C$ depends only on $n,\alpha,p$ and $s$.

To estimate the norm $\|(T\hat{U}-I)\lambda\|_{\ell^p}$, we need
an estimate of the series on the right of \eqref{mthcomp} (without
the $\lvert\lambda_k\rvert$ term) as given in Lemma \ref{LsmallSeries}
below.
We first prove this lemma and complete the proof of
Proposition \ref{PInterp2} at the end of the section.

Observe that by Lemma \ref{LIntonSandB}, for $b>-1$ and $c>0$,
there exists $C>0$ (depending only on $n,b,c$) such that
\begin{equation*}
(1-\lvert a\rvert^2)^c
\int_{\mathbb B}\frac{(1-\lvert y\rvert^2)^b}{[a,y]^{n+b+c}}\,d\nu(y)
\leq C,
\end{equation*}
uniformly for all $a\in\mathbb B$.
The next result will be needed in the proof of Lemma \ref{LsmallSeries}.

\begin{lemma}\label{LsmallInt}
Let $b>-1$ and $c>0$.
For $\varepsilon >0$, there exists $0<r_\varepsilon<1$ such that if $r_\varepsilon<r<1$,
then for all $a\in\mathbb B$,
\begin{equation*}
(1-\lvert a\rvert^2)^c
\int_{\mathbb B\backslash E_r(a)}\,
\frac{(1-\lvert y\rvert^2)^b}{[a,y]^{n+b+c}}\,d\nu(y)
<\varepsilon.
\end{equation*}
\end{lemma}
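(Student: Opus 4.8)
The plan is to eliminate the dependence on $a$ by the substitution $y=\varphi_a(w)$. Since $E_r(a)=\varphi_a(\mathbb B_r)$ and $\varphi_a\circ\varphi_a=\mathrm{Id}$, this change of variables carries $\mathbb B\setminus E_r(a)$ onto the Euclidean annulus $\{w:r\le\lvert w\rvert<1\}$, which is exactly the region on which radial coordinates are convenient. Using \eqref{MobiusIdnt} for $1-\lvert y\rvert^2$, Lemma \ref{La_varphia} for $[a,y]$, and \eqref{Jacobian} for the Jacobian, I expect every power of $(1-\lvert a\rvert^2)$ produced by the integrand to cancel against the prefactor $(1-\lvert a\rvert^2)^c$, leaving
\begin{equation*}
(1-\lvert a\rvert^2)^c\int_{\mathbb B\setminus E_r(a)}\frac{(1-\lvert y\rvert^2)^b}{[a,y]^{n+b+c}}\,d\nu(y)
=\int_{r\le\lvert w\rvert<1}(1-\lvert w\rvert^2)^b\,[w,a]^{c-b-n}\,d\nu(w).
\end{equation*}
After this reduction the only surviving dependence on $a$ is through $[w,a]$ inside the integrand.

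Next I would pass to polar coordinates $w=\rho\zeta$, so that the inner integral over $\mathbb S$ must be evaluated. The key observation is the identity $[\rho\zeta,a]=\lvert\zeta-\rho a\rvert$, which follows immediately from the first expression in \eqref{bracketxy} (take $x=a$, $y=\rho\zeta$ and use the symmetry of $[\cdot,\cdot]$). This turns the inner integral into $\int_{\mathbb S}\lvert\zeta-\rho a\rvert^{c-b-n}\,d\sigma(\zeta)=I_{c_*}(\rho a)$ with $c_*=b+1-c$, so Lemma \ref{LIntonSandB} applies directly at the point $\rho a\in\mathbb B$.

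The uniformity in $a$ then comes from the elementary bound $1-\lvert\rho a\rvert^2\ge 1-\rho^2$. In each of the three cases of Lemma \ref{LIntonSandB} this lets me replace $1-\lvert\rho a\rvert^2$ by $1-\rho^2$ and dominate $I_{c_*}(\rho a)$ by a function of $\rho$ alone: by a constant when $c_*<0$, by $1+\log\frac{1}{1-\rho^2}$ when $c_*=0$, and by $(1-\rho^2)^{-c_*}$ when $c_*>0$. Consequently the whole expression is bounded, uniformly in $a$, by a constant times a radial tail $\int_r^1(1-\rho^2)^{\beta}\rho^{n-1}\,d\rho$ (with an extra logarithmic factor in the borderline case), where the exponent $\beta$ equals $b$, $b$, or $c-1$ respectively. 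In every case $\beta>-1$ (using $b>-1$ when $c_*\le 0$ and $c>0$ when $c_*>0$), so the integral over $(0,1)$ converges and its tail tends to $0$ as $r\to1$. Choosing $r_\varepsilon$ so that this $a$-free tail is below $\varepsilon$ for all $r>r_\varepsilon$ completes the argument.

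The hard part is precisely the uniformity in $a$: naive estimates all leave $a$-dependent factors behind, and the whole scheme works only because the $\varphi_a$-substitution makes the prefactor $(1-\lvert a\rvert^2)^c$ cancel \emph{exactly}, and because $\lvert\rho a\rvert\le\rho$ then collapses $I_{c_*}(\rho a)$ to something controlled by $\rho$ alone. I would therefore take particular care with the bookkeeping of exponents in the change of variables, since an error there is exactly where the cancellation could silently fail.
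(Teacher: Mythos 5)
Your proposal is correct and follows essentially the same route as the paper's own proof: the M\"obius change of variables $y=\varphi_a(z)$ with the exact cancellation of $(1-\lvert a\rvert^2)^c$ via \eqref{MobiusIdnt}, Lemma \ref{La_varphia} and \eqref{Jacobian}, then polar coordinates, the identity $[a,\rho\zeta]=\lvert\rho a-\zeta\rvert$, Lemma \ref{LIntonSandB} together with $1-\lvert\rho a\rvert^2\geq 1-\rho^2$ for uniformity in $a$, and finally the vanishing tail of a convergent radial integral. Your exponent bookkeeping (the resulting kernel $[z,a]^{-(n+b-c)}$, the index $c_*=1+b-c$, and the tail exponents $b$, $b$ with a logarithm, and $c-1$) agrees with the paper's computation.
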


\begin{proof}
The change of variable $y=\varphi_a(z)$ with the facts that
$\varphi_a(\mathbb B_r)=E_r(a)$, and $\lvert J_{\varphi_a}\rvert$ is
as given in \eqref{Jacobian} shows
\begin{equation*}
K:=(1-\lvert a\rvert^2)^c
\int\limits_{\mathbb B\backslash E_r(a)}
\frac{(1-\lvert y\rvert^2)^b d\nu(y)}{[a,y]^{n+b+c}}
=(1-\lvert a\rvert^2)^c
\int\limits_{\mathbb B\backslash \mathbb B_r}
\frac{(1-\lvert \varphi_a(z)\rvert^2)^{b+n}\,d\nu(z)}
{[a,\varphi_a(z)]^{n+b+c}(1-\lvert z\rvert^2)^n}.
\end{equation*}
Applying \eqref{MobiusIdnt} and Lemma \ref{La_varphia} and
simplifying we deduce
\begin{equation*}
K=\int_{\mathbb B\backslash \mathbb B_r}
\frac{(1-\lvert z\rvert^2)^b}{[a,z]^{n+b-c}}\,d\nu(z)
=n\int_r^1 t^{n-1}(1-t^2)^b\int_{\mathbb S}
\frac{d\sigma(\zeta)}{\lvert ta-\zeta\rvert^{n+b-c}}\,dt,
\end{equation*}
where in the second equality we integrate in polar coordinates and
use the fact that $[a,t\zeta]=\lvert ta-\zeta\rvert$ by \eqref{bracketxy}.
By Lemma \ref{LIntonSandB} and the inequality $1-\lvert a\rvert^2t^2\geq 1-t^2$,
we have
\begin{equation*}
\int_{\mathbb S} \frac{d\sigma(\zeta)}{\lvert ta-\zeta\rvert^{n+b-c}}
\leq C g(t):=
\begin{cases}
\dfrac{1}{(1-t^2)^{1+b-c}},&\text{if $1+b-c>0$};\\
1+\log\dfrac{1}{1-t^2},&\text{if $1+b-c=0$};\\
1,&\text{if $1+b-c<0$},
\end{cases}
\end{equation*}
where the constant $C$ depends only on $n,b,c$.
Thus $K\leq Cn\int_r^1 t^{n-1}(1-t^2)^b\,g(t)\,dt$ and
since in all the three cases the integral
$\int_0^1 t^{n-1}(1-t^2)^b\,g(t)\,dt$ is finite because $b>-1$ and $c>0$,
one can make $K<\varepsilon$ by choosing $r$ sufficiently close to $1$.
\end{proof}

The next lemma is an analogue of Lemma 3.1 of \cite{Ro}.

\begin{lemma}\label{LsmallSeries}
Let $b>n-1$ and $c>0$.
For $1/2<r<1$, there exists $C(r)>0$ (depending also on $n,b$ and $c$)
such that if $\{a_m\}$ is $r$-separated, then
\begin{equation*}
(1-\lvert a_m\rvert^2)^c
\sum_{\substack{k=1\\ k\ne m}}^\infty\frac{(1-\lvert a_k\rvert^2)^b}{[a_m,a_k]^{b+c}}
\leq C(r),
\end{equation*}
for every $m=1,2,\dots$.
Moreover, one can choose $C(r)$ to be arbitrarily small by making $r$
sufficiently close to $1$.
\end{lemma}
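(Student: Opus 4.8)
The plan is to dominate the sum by an integral of the type controlled in Lemma \ref{LsmallInt} and then localize that integral away from $a_m$, exploiting the $r$-separation. The exponents are arranged so that Lemma \ref{LsmallInt} applies with parameters $b'=b-n$ and $c'=c$; the hypothesis $b>n-1$ is exactly what guarantees $b'>-1$, while $c'=c>0$ is immediate.

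First I would pass from the sum to an integral against the invariant measure $d\tau$. Since $\{a_k\}$ is $r$-separated, the balls $E_{r/2}(a_k)$ are pairwise disjoint. For $k\ne m$ and $y\in E_{r/2}(a_k)$ one has $\rho(y,a_k)<r/2<1/2$, so Lemma \ref{Lratiosquare} gives $(1-\lvert y\rvert^2)\sim(1-\lvert a_k\rvert^2)$ and Lemma \ref{Lratiobracket} gives $[a_m,y]\sim[a_m,a_k]$, both with absolute constants (bounded by $3$) independent of $r$. Because $\tau(E_{r/2}(a_k))=\tau(\mathbb B_{r/2})$ is independent of $a_k$, this yields
\begin{equation*}
\frac{(1-\lvert a_k\rvert^2)^b}{[a_m,a_k]^{b+c}}
\sim\frac{1}{\tau(\mathbb B_{r/2})}\int_{E_{r/2}(a_k)}\frac{(1-\lvert y\rvert^2)^b}{[a_m,y]^{b+c}}\,d\tau(y),
\end{equation*}
with constants uniform in $m,k$. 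Summing over $k\ne m$, using disjointness, and rewriting $d\tau=(1-\lvert y\rvert^2)^{-n}\,d\nu$, I would obtain
\begin{equation*}
\sum_{\substack{k=1\\ k\ne m}}^\infty\frac{(1-\lvert a_k\rvert^2)^b}{[a_m,a_k]^{b+c}}
\lesssim\frac{1}{\tau(\mathbb B_{r/2})}\int_{\bigcup_{k\ne m}E_{r/2}(a_k)}\frac{(1-\lvert y\rvert^2)^{b-n}}{[a_m,y]^{b+c}}\,d\nu(y).
\end{equation*}

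The crucial step is to show the domain of integration stays far from $a_m$. For $k\ne m$ and $y\in E_{r/2}(a_k)$ we have $\rho(a_m,a_k)\ge r$ and $\rho(a_k,y)<r/2$, so the upper bound in the strong triangle inequality (Lemma \ref{LstrongTri}) forces $\rho(a_m,y)\ge\rho_0(r):=r/(2-r^2)$, and one checks $\rho_0(r)\to1$ as $r\to1$. Hence $\bigcup_{k\ne m}E_{r/2}(a_k)\subset\mathbb B\setminus E_{\rho_0(r)}(a_m)$. Multiplying the previous display by $(1-\lvert a_m\rvert^2)^c$ and recognizing the integral as the one in Lemma \ref{LsmallInt} with $b'=b-n>-1$ and $c'=c>0$ (so that $n+b'+c'=b+c$), I would get
\begin{equation*}
(1-\lvert a_m\rvert^2)^c\sum_{\substack{k=1\\ k\ne m}}^\infty\frac{(1-\lvert a_k\rvert^2)^b}{[a_m,a_k]^{b+c}}
\lesssim\frac{1}{\tau(\mathbb B_{r/2})}\,(1-\lvert a_m\rvert^2)^c\int_{\mathbb B\setminus E_{\rho_0(r)}(a_m)}\frac{(1-\lvert y\rvert^2)^{b'}}{[a_m,y]^{n+b'+c'}}\,d\nu(y).
\end{equation*}
For each fixed $r\in(1/2,1)$ the integral is bounded by the full integral over $\mathbb B$, which is $\le C$ uniformly in $a_m$ by the remark preceding Lemma \ref{LsmallInt}; since $\tau(\mathbb B_{r/2})\ge\tau(\mathbb B_{1/4})>0$, this produces a finite $C(r)$. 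For the sharper ``moreover'' claim, given $\varepsilon>0$, Lemma \ref{LsmallInt} makes the localized integral $<\varepsilon$ once $\rho_0(r)>r_\varepsilon$, i.e.\ once $r$ is close enough to $1$; hence $C(r)$ can be made arbitrarily small as $r\to1$.

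The main obstacle is the localization in the third paragraph: converting $r$-separation into the concrete lower bound $\rho_0(r)$ for $\rho(a_m,y)$ via the strong triangle inequality and verifying $\rho_0(r)\to1$, which is precisely what unlocks the smallness in Lemma \ref{LsmallInt}. The only other point needing care is that all comparison constants in the first two paragraphs (from Lemmas \ref{Lratiosquare} and \ref{Lratiobracket}, and the factor $\tau(\mathbb B_{r/2})^{-1}$) are uniform in $r\in(1/2,1)$ and in the indices $m,k$, so they can be absorbed into $C(r)$ without spoiling the $r\to1$ decay.
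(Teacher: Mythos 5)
Your proof is correct and follows essentially the same route as the paper's: dominate each term of the sum by an integral over the disjoint pseudo-hyperbolic balls around the $a_k$, use the separation hypothesis together with the strong triangle inequality (Lemma \ref{LstrongTri}) to show the union of those balls misses a ball $E_{R}(a_m)$ with $R\to 1^-$ as $r\to 1^-$, and finish with Lemma \ref{LsmallInt}. The only differences are cosmetic: you integrate over $E_{r/2}(a_k)$ (normalizing by $\tau(\mathbb B_{r/2})$) where the paper uses the fixed balls $E_{1/4}(a_k)$, so your localization radius is $r/(2-r^2)$ instead of the paper's $(r-1/4)/(1-r/4)$, both of which tend to $1$.
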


\begin{proof}
By the Lemmas \ref{Lratiosquare}, \ref{Lratiobracket} and \eqref{PseudoBall},
there exists $C>0$ depending only on $n,b,c$ such that
\begin{equation*}
\frac{(1-\lvert a\rvert^2)^b}{[x,a]^{b+c}}
\leq C\int_{E_{1/4}(a)}\frac{(1-\lvert y\rvert^2)^{b-n}}{[x,y]^{b+c}}\,d\nu(y),
\end{equation*}
for all $a,x\in\mathbb B$.
If $\{a_m\}$ is $r$-separated with $r>1/2$, then the balls $E_{1/4}(a_m)$
are disjoint and therefore
\begin{equation*}
(1-\lvert a_m\rvert^2)^c
\sum_{\substack{k=1\\ k\ne m}}^\infty\frac{(1-\lvert a_k\rvert^2)^b}{[a_m,a_k]^{b+c}}
\leq C(1-\lvert a_m\rvert^2)^c
\int_{\bigcup^\infty_{\substack{k=1\\ k\ne m}}E_{1/4}(a_k)}
\frac{(1-\lvert y\rvert^2)^{b-n}}{[a_m,y]^{b+c}}\,d\nu(y).
\end{equation*}
Set
\begin{equation*}
  R:=\frac{r-1/4}{1-r/4}.
\end{equation*}
Clearly, $0<R<1$.
We claim that $\bigcup^\infty_{\substack{k=1\\ k\ne m}}E_{1/4}(a_k)
\subset\mathbb B\backslash E_R(a_m)$.
To see this note first that for $0\leq t_0<1$, the function
$t\mapsto (t+t_0)/(1+tt_0)$ is increasing on the interval $0\leq t<1$.
Therefore by the strong triangle inequality in Lemma \ref{LstrongTri}, if
$z\in E_{1/4}(a_k)$ with $k\ne m$, then
\begin{equation*}
r\leq
\rho(a_k,a_m)\leq\frac{\rho(a_k,z)+\rho(z,a_m)}{1+\rho(a_k,z)\rho(z,a_m)}
\leq\frac{1/4+\rho(z,a_m)}{1+\rho(z,a_m)/4},
\end{equation*}
which implies $\rho(z,a_m)\geq R$.
Thus
\begin{equation*}
(1-\lvert a_m\rvert^2)^c
\sum_{\substack{k=1\\ k\ne m}}^\infty
\frac{(1-\lvert a_k\rvert^2)^b}{[a_m,a_k]^{b+c}}
\leq C(1-\lvert a_m\rvert^2)^c
\int_{\mathbb B\backslash E_R(a_m)}
\frac{(1-\lvert y\rvert^2)^{b-n}}{[a_m,y]^{b+c}}\,d\nu(y),
\end{equation*}
and since $R\to 1^-$ as $r\to1^-$, the desired result follows
from Lemma \ref{LsmallInt}.
\end{proof}

We now complete the proof of Proposition \ref{PInterp2}.
We consider the cases $0<p\leq 1$ and $p>1$ separately.

\begin{proof}[Proof of Proposition \ref{PInterp2} when $0<p\leq 1$]
For $\lambda=\{\lambda_m\}\in\ell^p$, by \eqref{mthcomp},
Lemma \ref{Lseq} (i) and Fubini's theorem,
\begin{align*}
\|(T\hat{U}-I)\lambda\|_{\ell^p}^p
&\leq C^p\sum_{m=1}^\infty(1-\lvert a_m\rvert^2)^{\alpha+n}
\Biggl(\sum_{\substack{k=1\\ k\ne m}}^\infty\lvert\lambda_k\rvert\,
\frac{(1-\lvert a_k\rvert^2)^{s+n-(\alpha+n)/p}}{[a_m,a_k]^{s+n}}
\Biggr)^p\\
&\leq C^p\sum_{m=1}^\infty(1-\lvert a_m\rvert^2)^{\alpha+n}
\sum_{\substack{k=1\\ k\ne m}}^\infty\lvert\lambda_k\rvert^p\,
\frac{(1-\lvert a_k\rvert^2)^{p(s+n)-(\alpha+n)}}{[a_m,a_k]^{p(s+n)}}\\
&=C^p\sum_{k=1}^\infty\lvert\lambda_k\rvert^p
(1-\lvert a_k\rvert^2)^{p(s+n)-(\alpha+n)}
\sum_{\substack{m=1\\ m\ne k}}^\infty\,
\frac{(1-\lvert a_m\rvert^2)^{\alpha+n}}{[a_m,a_k]^{p(s+n)}}.
\end{align*}
By Lemma \ref{LsmallSeries}, there exists $C(r)$ such that
(note that $\alpha+n>n-1$ since $\alpha>-1$, and
$p(s+n)-(\alpha+n)>0$ by \eqref{scond})
\begin{equation*}
\|(T\hat{U}-I)\lambda\|_{\ell^p}^p\leq C^p\,C(r)\|\lambda\|_{\ell^p}^p.
\end{equation*}
Since $C(r)$ can be made arbitrarily small by making $r$ close enough
to $1$, the proposition follows.
\end{proof}

We next deal with the case $1<p<\infty$.
Let $p'$ be the conjugate exponent of $p$.
We employ Schur's test which, for the sequence space
$\ell^p$, has the following form (see \cite[Lemma 3.2]{Ro}):
Let $A=(A_{mk})_{1\leq m,k<\infty}$ be an infinite matrix with
nonnegative entries and $L_A:\ell^p\to\ell^p$ be the corresponding
operator taking $\lambda=\{\lambda_m\}$ to
\begin{equation*}
  \{L_A\lambda\}_m=\sum_{k=1}^\infty A_{mk}\lambda_k, \qquad m=1,2,\dots.
\end{equation*}
If there exist a constant $C>0$ and a positive sequence $\{\gamma_m\}$
such that
\begin{equation*}
\sum_{k=1}^\infty A_{mk}\gamma_k^{p'}\leq C\gamma_m^{p'},\qquad m=1,2,\dots,
\end{equation*}
and
\begin{equation*}
\sum_{m=1}^\infty A_{mk}\gamma_m^{p}\leq C\gamma_k^{p},\qquad k=1,2,\dots,
\end{equation*}
then the operator $L_A$ is bounded and $\|L_A\|_{\ell^p\to\ell^p}\leq C$.

\begin{proof}[Proof of Proposition \ref{PInterp2} when $1<p<\infty$]
Without loss of generality we can assume that the $r$-separated sequence
$\{a_m\}$ is maximal, that is $\{a_m\}$ is an $r$-lattice and so
is an infinite sequence.

For $m,k=1,2,\dots$, set $A_{mk}=0$ if $k=m$ and
\begin{equation*}
A_{mk}=(1-\lvert a_m\rvert^2)^{(\alpha+n)/p}\,
\frac{(1-\lvert a_k\rvert^2)^{s+n-(\alpha+n)/p}}{[a_m,a_k]^{s+n}},
\end{equation*}
if $k\ne m$.
Let $A=(A_{mk})$ and $L_A\colon\ell^p\to\ell^p$ be the corresponding operator.
Then by \eqref{mthcomp},
\begin{equation*}
\bigl\lvert\{(T\hat{U}-I)\lambda\}_m\bigr\rvert\leq C\{L_A\lambda\}_m.
\end{equation*}

To estimate $\|L_A\|$ with the Schur's test we take
$\gamma_m=(1-\lvert a_m\rvert^2)^{(n-1)/pp'}$.
Then
\begin{equation*}
\sum_{k=1}^\infty A_{mk}\gamma_k^{p'}
=(1-\lvert a_m\rvert^2)^{(\alpha+n)/p}\,
\sum_{\substack{k=1\\ k\ne m}}^\infty\,
\frac{(1-\lvert a_k\rvert^2)^{s+n-(\alpha+1)/p}}{[a_m,a_k]^{s+n}},
\end{equation*}
and by Lemma \ref{LsmallSeries}, there exists $C_1(r)$ such that
(we check that $s+n-(\alpha+1)/p>n-1$ by \eqref{scond},
and $(\alpha+1)/p>0$)
\begin{equation*}
\sum_{k=1}^\infty A_{mk}\gamma_k^{p'}
\leq (1-\lvert a_m\rvert^2)^{(\alpha+n)/p}
\frac{C_1(r)}{(1-\lvert a_m\rvert^2)^{(\alpha+1)/p}}=C_1(r)\gamma_m^{p'}.
\end{equation*}
Observe next that
\begin{equation*}
\sum_{m=1}^\infty A_{mk}\gamma_m^{p}
=(1-\lvert a_k\rvert^2)^{s+n-(\alpha+n)/p}\,
\sum_{\substack{m=1\\ m\ne k}}^\infty\,
\frac{(1-\lvert a_m\rvert^2)^{(\alpha+n)/p+(n-1)/p'}}{[a_m,a_k]^{s+n}}.
\end{equation*}
To apply Lemma \ref{LsmallSeries} we check that
$(\alpha+n)/p+(n-1)/p'=(\alpha+1)/p+n-1>n-1$, and
$s+n-\bigl((\alpha+n)/p+(n-1)/p'\bigr)=s+1-(\alpha+1)/p>0$
by \eqref{scond}.
Thus there exists $C_2(r)$ such that
\begin{equation*}
\sum_{m=1}^\infty A_{mk}\gamma_m^{p}
\leq (1-\lvert a_k\rvert^2)^{s+n-(\alpha+n)/p}
\frac{C_2(r)}{(1-\lvert a_k\rvert^2)^{s+n-(\alpha+n)/p-(n-1)/p'}}
=C_2(r)\gamma_k^p.
\end{equation*}

We deduce that $L_A$ is bounded and $\|L_A\|\leq\max\{C_1(r),C_2(r)\}$.
Therefore $\|T\hat{U}-I\|\leq C\max\{C_1(r),C_2(r)\}$ and
since both $C_1(r)$ and $C_2(r)$ can be made arbitrarily small by making
$r$ close enough to $1$, we conclude that
$\|T\hat{U}-I\|$ can be made small.
This finishes the proof of Proposition \ref{PInterp2}.
\end{proof}

\begin{remark}
Let $R_{\alpha}(x,y)$ be the reproducing kernel of the (Euclidean)
harmonic Bergman space $b^p_\alpha$.
In the above proof of the interpolation theorem, the only facts we
used about the $\mathcal H$-harmonic reproducing kernels
$\mathcal R_\alpha$ are the estimates given in
Lemma \ref{LEstR}, \eqref{EstRtwosided} and \eqref{IntRpower}.
However, it is well-known that exactly same estimates hold also for
the kernels $R_\alpha$.
It is also well-known that a sub-mean value inequality similar to
Lemma \ref{Lsubmean} holds for harmonic functions.
Thus, the above proof, without any change, works also in the harmonic
case and the interpolation theorem is valid if $B^p_\alpha$
is replaced with $b^p_\alpha$.
\end{remark}

\section{Inclusion Relations}

In this section we prove Theorem \ref{Tinclusion}.

\begin{proof}[Proof of Theorem \ref{Tinclusion}]
We first prove part (a).
Suppose $\mathcal B^p_\alpha\subset\mathcal B^q_\beta$.
Since point evaluations are bounded on Bergman spaces, by the
closed graph theorem, the inclusion
$i\colon\mathcal B^p_\alpha\to\mathcal B^q_\beta$ is continuous.
For every $s>-1$ and $a\in\mathbb B$, the function
$\mathcal R_s(a,\cdot)$ is bounded on $\mathbb B$ by
Lemma \ref{LEstR} (a) and \eqref{xybig}, so belongs to every
Bergman space.
By \eqref{IntRpower}, for large enough $s$, we have
\begin{equation*}
\frac{\|\mathcal R_s(a,\cdot)\|_{\mathcal B^q_\beta}}
{\|\mathcal R_s(a,\cdot)\|_{\mathcal B^p_\alpha}}
\sim(1-\lvert a\rvert^2)^{(\beta+n)/q-(\alpha+n)/p},
\end{equation*}
and the right-hand side is bounded as $\lvert a\rvert\to1^-$
only if $(\beta+n)/q\geq(\alpha+n)/p$.

Suppose now that
\begin{equation}\label{condInca}
\frac{\alpha+n}{p}\leq\frac{\beta+n}{q}.
\end{equation}
Pick $s$ large enough so that \eqref{scond} holds both for
$\alpha,p$ and $\beta,q$.
Let $\{a_m\}$ be an $r$-lattice with $r<r_0$, where
$r_0$ is as asserted in the atomic decomposition theorem.
Then for every $f\in\mathcal B^p_\alpha$, there exists
$\{\lambda_m\}\in\ell^p$ with
$\|\{\lambda_m\}\|_{\ell^p}\sim\|f\|_{\mathcal B^p_\alpha}$
such that
\begin{align*}
f(x)&=\sum_{m=1}^\infty\lambda_m
(1-\lvert a_m\rvert^2)^{s+n-(\alpha+n)/p}\,
\mathcal R_s(x,a_m)\\
&=\sum_{m=1}^\infty\lambda_m
(1-\lvert a_m\rvert^2)^{(\beta+n)/q-(\alpha+n)/p}\,
(1-\lvert a_m\rvert^2)^{s+n-(\beta+n)/q}\,
\mathcal R_s(x,a_m).
\end{align*}
The sequence
$\{\beta_m\}
=\bigl\{\lambda_m(1-\lvert a_m\rvert^2)^{(\beta+n)/q-(\alpha+n)/p}\bigr\}$
is also in $\ell^p$ by \eqref{condInca} and so $\{\beta_m\}\in\ell^q$ by
Lemma \ref{Lseq} (i).
It follows from Proposition \ref{Patom1} that $f\in\mathcal B^q_\beta$
and $\|f\|_{\mathcal B^q_\beta}\lesssim\|\{\beta_m\}\|_{\ell^q}
\leq\|\{\beta_m\}\|_{\ell^p}\leq\|\{\lambda_m\}\|_{\ell^p}
\lesssim\|f\|_{\mathcal B^p_\alpha}$.

We next prove part (b).
Note first that in this case $p/q>1$ and the conjugate
exponent of $p/q$ is $p/(p-q)$.

To see the if part, suppose
\begin{equation}\label{condIncb}
\frac{\alpha+1}{p}<\frac{\beta+1}{q}.
\end{equation}
By H\"{o}lder's inequality,
\begin{equation*}
\int_{\mathbb B}\lvert f(x)\rvert^q\,d\nu_\beta(x)
\leq\biggl(\int_{\mathbb B}\lvert f(x)\rvert^p\,d\nu_\alpha(x)\biggr)^{q/p}
\biggl(\int_{\mathbb B} (1-\lvert x\rvert^2)^{(\beta-\alpha\frac{q}{p})\frac{p}{p-q}}
\,d\nu(x)\biggr)^{(p-q)/p},
\end{equation*}
and since the exponent $(\beta-\alpha\frac{q}{p})\frac{p}{p-q}>-1$ by \eqref{condIncb},
we obtain $\|f\|_{\mathcal B^q_\beta}\lesssim \|f\|_{\mathcal B^p_\alpha}$.

Suppose now that $\mathcal B^p_\alpha\subset\mathcal B^q_\beta$.
Let $r_0$ be as asserted in the interpolation theorem
for $\mathcal B^p_\alpha$ and let $\{a_m\}$ be an
$r$-lattice with $r>r_0$.
Then for every $\{\lambda_m\}\in\ell^p$ there exists a function
$f\in\mathcal B^p_\alpha$ such that
\begin{equation*}
f(a_m)=\lambda_m(1-\lvert a_m\rvert^2)^{-(\alpha+n)/p}.
\end{equation*}
Since $f$ is also in $\mathcal B^q_\beta$ the sequence
$\{f(a_m)(1-\lvert a_m\rvert^2)^{(\beta+n)/q}\}$ is in $\ell^q$
by Proposition \ref{Pfaminlp}, and so
\begin{equation*}
\sum_{m=1}^\infty \lvert\lambda_m\rvert^q
(1-\lvert a_m\rvert^2)^{(\beta+n)-(\alpha+n)q/p}<\infty.
\end{equation*}
This shows that the series
$\sum_{m=1}^\infty \lvert\beta_m\rvert
(1-\lvert a_m\rvert^2)^{(\beta+n)-(\alpha+n)q/p}$ converges
for every $\{\beta_m\}\in\ell^{p/q}$.
By the duality in Lemma \ref{Lseq} (ii), we conclude that the
sequence $\{(1-\lvert a_m\rvert^2)^{(\beta+n)-(\alpha+n)q/p}\}$
is in $\ell^{p/(p-q)}$ and by Lemma \ref{LSumam} (b), this holds
only if
\begin{equation*}
\Bigl((\beta+n)-(\alpha+n)\frac{q}{p}\Bigr)\frac{p}{p-q}>n-1,
\end{equation*}
which is equivalent to \eqref{condIncb}.
\end{proof}

\end{document}